\documentclass[11pt,reqno]{amsart}


\usepackage[T1]{fontenc}
\usepackage{amsmath}							
\usepackage{amssymb}
\usepackage{amsthm}
\usepackage{amscd}
\usepackage{dashrule}
\usepackage{amsfonts}
\usepackage{stmaryrd}
\usepackage{algorithm, algorithmic}
\usepackage{ wasysym }
\usepackage{caption}
\usepackage{subcaption}
\usepackage{euler}

\usepackage{extarrows}
\usepackage[colorlinks, linktocpage, citecolor = red, linkcolor = blue]{hyperref}
\usepackage{color}
\usepackage{tikz}									
\usepackage{fullpage}
\usepackage[shortlabels]{enumitem}
\usepackage{longtable}
\usepackage{array}
\linespread{1.1}

				
\newtheorem{theorem}{Theorem}[section]
\newtheorem{lemma}[theorem]{Lemma}
\newtheorem{conjecture}[theorem]{Conjecture}
\newtheorem{proposition}[theorem]{Proposition}
\newtheorem{corollary}[theorem]{Corollary}  
	

\theoremstyle{definition}
								
\newtheorem{definition}[theorem]{Definition}

\newtheorem{example}[theorem]{Example}

\newenvironment{customthm}[1]
  {\innercustomthm}
  {\endinnercustomthm}

\newtheorem{remark}[theorem]{Remark}

\newcommand{\mult}[2]{\text{mult}_{\mathbb{#1}}(#2)}


\DeclareMathOperator{\conv}{conv}


\title{Towards tropically counting binodal surfaces}

\author[M. Brandt]{Madeline Brandt}
\address{Department of Mathematics, Brown University,  Providence, RI 02912}
\email{\href{mailto:madeline_brandt@brown.edu}{madeline\_brandt@brown.edu}}
\urladdr{\url{https://sites.google.com/view/madelinebrandt}}
\author{Alheydis Geiger}
\address{Department of Mathematics, University of T\"{u}bingen, Germany}
\email{\href{mailto:alheydis.geiger@math.uni-tuebingen.de}{alheydis.geiger@math.uni-tuebingen.de}}
\urladdr{\url{https://www.math.uni-tuebingen.de/en/research-chairs/geometry/team/hannah-markwig/team/alheydis-geiger}}

\subjclass[2020]{14T15, 14N10}
\begin{document}

\begin{abstract}
We count tropical multinodal surfaces using \emph{floor plans}, looking at the case when two nodes are tropically close together, i.e., \emph{unseparated}.
    We generalize tropical floor plans to recover the count of multinodal curves. 
   We then prove that for $\delta=2$ or $3$ nodes, tropical surfaces with unseparated nodes contribute asymptotically to the second order term of the polynomial giving the degree of the family of complex projective surfaces in $\mathbb{P}^3$ of degree $d$ with $\delta$ nodes.
   We classify when two nodes in a surface tropicalize to a vertex dual to a polytope with 6 lattice points, and prove that this only happens for projective degree $d$ surfaces satisfying point conditions in Mikhalkin position when $d>4$.
\end{abstract}

\maketitle

\setcounter{tocdepth}{2}
\tableofcontents

\section{Introduction}
The enumeration of a fixed type of variety satisfying certain properties is the subject of many longstanding, well-known, and popular problems in enumerative geometry.
Gromov-Witten invariants counting the number of nodal plane  curves are a famous example. Tropical geometry can be used as a combinatorial and polyhedral tool to solve algebraic enumeration questions.

Mikhalkin pioneered the use of tropical geometry to answer  enumerative questions \cite{Mik}. Tropical methods have successfully computed the Gromov-Witten invariants of curves
\cite{Mik,BM09}. 
Also, tropical geometry provides a rich environment for counting plane curves, and many different combinatorial tools have been developed \cite{GM07,BrMi2007,BM09}.
Moreover, in many instances, tropical methods give an approach for counting problems over $\mathbb{R}$ that are difficult to solve otherwise. For example, it was proven by means of tropical geometry that the Welschinger invariants, a signed count of rational plane curves over the real numbers, are always positive and that they are logarithmic asymptotically equal to the Gromov-Witten invariants, the analogous numbers over $\mathbb{C}$ \cite{IKS03}.

Tropical geometry  has only recently been applied to counting singular surfaces \cite{MaMaSh12,MaMaSh18,MaMaShSh19,Si20}.
Classically, it is known that the family of complex projective surfaces in $\mathbb{P}^3$ of degree $d$ with $\delta$ distinct nodes as their only singularities is of degree 
\begin{equation}
 N_{\delta, \mathbb{C}}^{\mathbb{P}^3}(d) = \frac{4^\delta d^{3 \delta}}{\delta !} - \frac{3 \cdot 4^\delta}{\delta!}d^{3\delta - 1} + \mathcal{O}(d^{3 \delta - 2}).  
 \label{maineq}
\end{equation}

See \cite{MaMaShSh19} for more details. This space has dimension $\binom{d+3}{3}-\delta-1$, so the above polynomial also counts the number of $\delta$-nodal surfaces of degree $d$ satisfying $\binom{d+3}{3}-\delta-1$ generic point conditions.
In \cite{MaMaShSh19} the authors introduce \emph{tropical floor plans} as a tool for counting tropical surfaces passing through points in Mikhalkin position where the node germs are at least one floor apart, and show that these account for $\frac{4^\delta d^{3 \delta}}{\delta !} + \mathcal{O}(d^{3 \delta - 1})$ surfaces, thus recovering the first term of the complex count asymptotically. They also show that in the plane curve case, floor plans easily count curves with one node.
In \cite{BG20} we studied tropical surfaces passing through points in Mikhalkin position with \emph{separated} nodes. These are tropical surfaces where the topological closures of the cells containing the tropicalizations of the nodes have empty intersection. 

Using these, we obtain $214$ of the $280$ complex binodal cubic surfaces passing through $17$ general points coming from tropical binodal surfaces through points in Mikhalkin position. A surface is \emph{binodal} if it has two distinct nodes. The remaining unknown surfaces therefore contain  \emph{unseparated} nodes. Here, the two node germs (see Definition~\ref{def:nodegerms}) are close together, so the cells that would normally contain the nodes interact and their topological closures intersect. 
Understanding surfaces with unseparated nodes is a necessary step towards counting with floor plans.
A deepened understanding of how to use floor plans to count multinodal surfaces could lead to a real count of degree $d$ surfaces  or other toric surfaces in $\mathbb{P}^3$.

In this paper, we move towards generalizing the technique of counting with tropical floor plans to the multinodal case. We do this in three ways: by counting tropical multinodal curves with floor plans, by giving a lower bound to the asymptotic contribution of unseparated nodes, 
and by searching systematically for the missing surfaces. The paper is structured around these three generalizations.

First, in Section \ref{sec:curves},  we define tropical floor plans for tropical plane curves with $\delta$ nodes without restrictions on their positions.  In Theorem \ref{mainthm1} we show that the number of tropical floor plans of degree $d$ plane curves with $\delta$ nodes counted with multiplicity recovers the Gromov-Witten numbers, i.e., the numbers of the genus $g=\tfrac{(d-1)(d-2)}{2}-\delta$ degree $d$ curves with $\delta$ nodes passing through $3d-1+g$ general points.
That floor plans are successful in counting multinodal plane curves motivates their use in higher dimensional counting problems.
We illustrate the use of tropical floor plans in Example \ref{ex:count} by counting the $225$ binodal plane curves of degree $4$ passing through $12$ points.

Second, in Section \ref{sec:asymptotics}, we study the asymptotics of the techniques used in \cite{BG20}. We show that for small $\delta$, relaxing the notion of a floor plan as defined in \cite{MaMaShSh19} to allow nodes in the same or in adjacent floors is not enough to produce the second order coefficient in Equation \eqref{maineq}.

\begin{customthm}{\ref{thm:asymptotic}}
The number of $\delta$-nodal surfaces of degree $d$  passing through points in Mikhalkin position such that their tropicalization has unseparated nodes is at least
$$
\frac{48}{5} d^5 + \mathcal{O}(d^4) >0\,\,\, \text{ for }\,\,\, \delta = 2, \ \ \ \text{  and   } \ \,\,\, \frac{221}{35} d^8 + \mathcal{O}(d^7) >0\,\,\, \text{ for }\,\,\, \delta = 3.
$$
\end{customthm}
Therefore, surfaces with separated nodes are insufficient to count binodal and trinodal surfaces asymptotically up to two degrees.
So, the techniques of \cite{BG20} are insufficient to improve on the asymptotic result of \cite{MaMaShSh19}. Therefore, we conclude that a significant portion of multinodal tropical surfaces contain unseparated nodes. This motivates an investigation of surfaces containing unseparated nodes. In the Newton subdivision of such a surface, one will see (subdivisions of) \emph{multinodal polytopes}. 

Third, in Section \ref{sec:binodalpolytopes}, we embark on a detailed study of \emph{binodal polytopes}. These polytopes are the Newton polytope of a binodal surface, and may appear in the dual subdivision of degree~$d$ multinodal surfaces. 
These polytopes are a source of unseparated nodes.
Such polytopes must contain at least 6 lattice points (Lemma \ref{lem:6latticepoints}). In order to appear in a floor decomposed surface, they must have width 1. Therefore, we naturally study polytopes with 6 lattice points and width 1 as a starting place. There are several infinite families of such polytopes, and they are classified in \cite{BS15}. We first determine which of these are binodal in Section \ref{subsec:6points}. Then, we
determine the 
lattice paths that can pass through these polytopes in \ref{subsec:paths}. Finally, we conjecture how they contribute to counts of degree $d$ surfaces in Theorem \ref{thm:binodal}, Conjecture \ref{conj:mult} and Theorem \ref{conj:main}.

\begin{customthm}{\ref{thm:binodal}}
Let $X$ be a binodal surface of degree $d$ passing through points in Mikhalkin position. The two nodes of the surface $X$ can only tropicalize to a vertex dual to a polytope with 6 lattice points if $d >4.$ In this case the two nodes are located at the vertex dual to a polytope of family 10, 13 or 20, see Figure~\ref{fig:polys}.
\end{customthm}

\begin{figure}[h]
    \centering
    \begin{tabular}{c|c|c}
    Family 10 & Family 13 & Family 20\\
 \includegraphics[height=0.08\textwidth]{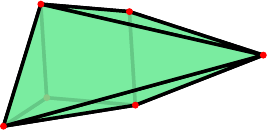}
			&\includegraphics[height=0.10\textwidth]{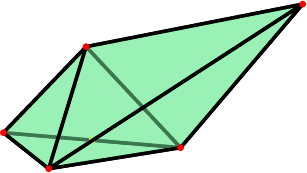}	 &  \includegraphics[height=0.13\textwidth]{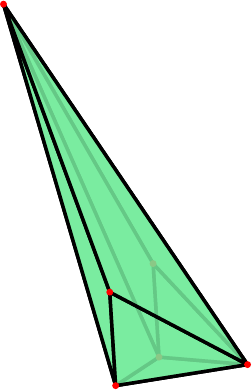} \\
			$\begin{pmatrix}
				0 & 0 & 0 & 0 & 1 & 1\\
				0 & 0 & 1 & 1 & 0 & a\\
				0 & 1 & 0 & 1 & 0 & 1
			\end{pmatrix}$  &
				$\begin{pmatrix}
				0 & 0 & 0 & 0 & 1 & 1\\
				0 & 1 & 1 & 2 & 1 & a+1\\
				0 & 0 & 1 & 0 & 0 & 1
			\end{pmatrix}$
			
			& $\begin{pmatrix}
				0 & 0 & 0 & 1 & 1 & 1\\
				1 & 1 & 2 & 0 & 1 & 1\\
				0 & 1 & 0 & a & 0 & 1
			\end{pmatrix}$ 
			\\
			$a\geq 3$
			& $a\geq 4$
			&$a>3$
    \end{tabular}
    \caption{The three families of polytopes with $6$ lattice points encoding two nodes at the same vertex for degree $d$ binodal surfaces through points in Mikhalkin position.
    }\label{fig:polys}
    \label{fig:my_label}
\end{figure}

In Conjecture \ref{conj:mult}, we make a postulation about the multiplicities each surface would need to be counted with (see Table \ref{tab:Multiplicites}). 
 Assuming Conjecture~\ref{conj:mult}, we determine the asymptotic contribution to counts of binodal degree $d$ surfaces taking these polytopes in to account.

\begin{customthm}{\ref{conj:main}}
 Assuming Conjecture \ref{conj:mult} is true, it follows that tropical degree $d$ surfaces with a binodal polytope with $6$ vertices and width $1$ in the dual subdivision contribute $\frac{1}{4}d^4+\mathcal{O}(d^3)$ surfaces to the count of binodal degree $d$ surfaces $N_{2,\mathbb{C}}^{\mathbb{P}^3}(d)$. So, they contribute to the third-highest term of the polynomial  $N_{2,\mathbb{C}}^{\mathbb{P}^3}(d)$.
\end{customthm}

Throughout the paper, we carry out computations in \texttt{Singular} \cite{singular}. Code in \texttt{OSCAR} \cite{oscar} for these computations is available, see \cite{Code21}. These computations are explained in more detail in \cite{G22}. 
We conclude with remaining open directions for multinodal surface counting.

\medskip

\textbf{Acknowledgements.} The authors thank the anonymous referees for their thoughtful and helpful comments which improved this article considerably. The authors thank Hannah Markwig for her explanations, the insightful discussions and her feedback.
Thanks also to Janko B\"{o}hm for suggesting interesting questions to this project and to Eugenii Shustin. We thank Tommy Hofmann for his assistance with \texttt{OSCAR}. This work is a contribution to the SFB-TRR 195 'Symbolic Tools in Mathematics and their Application' of the German Research Foundation (DFG).
AG was funded by a PhD scholarship from the Cusanuswerk e.V..
MB is supported by the National Science Foundation under Award No. 2001739. Any opinions, findings, and conclusions or recommendations expressed in this material are those of the author(s) and do not necessarily reflect the views of the National Science Foundation.

\section{Preliminaries}\label{sec:preliminaries}

Let $\mathbb{K} = \cup_{m\geq 1} \mathbb{C}((t^{1/m}))$ be the field of complex Puiseux series. We assume familiarity with tropicalization of hypersurfaces and the corresponding dual subdivision of the Newton polytope as in \cite[Chapter 3.1]{tropicalbook}. Throughout, we use the $\max$-convention.
We say a tropical surface in $\mathbb{R}^3$ is of \emph{degree $d$} if its Newton polytope is $d\Delta_3 = \mathrm{conv}(\{(d,0,0),(0,d,0),(0,0,d),(0,0,0)\})$.

Tropical techniques for enumerating surfaces passing through points reduce to the case where the points are assumed to be in \emph{Mikhalkin position}. These are points in $\mathbb{K}^3$ that are in general position such that their tropicalization in $\mathbb{R}^3$ is in tropical general position. Additionally, the combinatorics of the resulting tropical surfaces is well-understood. For $\binom{d+3}{3}-\delta-1$ tropical points in $\mathbb{R}^3$ \emph{tropical general position} means, that they allow only finitely many tropical surfaces of degree $d$ with $\delta$ nodes of maximal geometric type \cite{MaMaSh18} to pass through them.
 
 \begin{definition}[{\cite[Section 3.1]{MaMaSh18}}]\label{def:pointconfig}
For a point configuration $\omega=(p_1,...,p_{n})$ of $n$ points in $\mathbb{K}^3$, we denote by $q_i\in \mathbb{R}^3$ the tropicalization of $p_i$ for $i=1,...,n$. We say $\omega$ is in \emph{Mikhalkin position} if the $q_i$ are distributed with growing distances along a line $\{\lambda\cdot (1,\eta,\eta^2)|\lambda \in \mathbb{R} \}\subset\mathbb{R}^3$, where $0<\eta\ll1,$ and the $p_i$ are in generic position. 
\end{definition}
Note, that this implies, that $q_1,\,\ldots,\,q_n$ are in tropically general position \cite{MaMaSh18}.
We say a tropical surface passes through points in Mikhalkin position if it passes through the $q_i$. From now on, we assume that the points that our surfaces pass through are in Mikhalkin position. 
For a  tropical surface passing through points in Mikhalkin position, we know that the points are contained in the interior of $2$-dimensional cells of the surface \cite[Section 2.1]{MaMaSh18}. As $2$-dimensional cells correspond to edges in the dual subdivision of the Newton polytope $\Delta$, these point conditions lead to a path of the lattice points of $\Delta$ as follows. Let $v= (1,\eta,\eta^2)$ with $0<\eta\ll1$.
\begin{definition}[{\cite[Section 3.2]{MaMaSh18}}]\label{def:partialorder}
Consider the partial order $>$ in $\mathbb{R}^3$ where $u > u'\, \Leftrightarrow \langle u - u', v\rangle>0.$ Order the points of $\Delta \cap\mathbb{Z}^3$ according to $>$:
\begin{equation*}
    \Delta \cap\mathbb{Z}^3 = \{w_0,..., w_{N+1}\},\,\,  w_i < w_{i+1} \text{ for all } i = 0,..., N.
\end{equation*}
Note, that since $1\gg\eta$, we can assume that $\tfrac{1}{\eta}>\max_{(x,y,z)\in\Delta\cap\mathbb{Z}^3}\{|x+y+z| \}$, so that the partial order $>$ restricted to $\Delta\cap\mathbb{Z}^3$ is a total order.

Given a subset $A\subset \Delta \cap\mathbb{Z}^3$ consisting of $m \geq 2$ points $a_1 < a_2 < \cdots < a_m$, we call an ordered subset of the set of
segments $P(A) := \{[a_i , a_{i+1}] : i = 1,..., m - 1\}$ a \emph{lattice path} supported on $A$ if it covers the whole set $A$. Here, $[a_i , a_{i+1}]$ denotes the line segment from $a_i$ to $a_{i+1}$. The set $P(A)$ is called the \emph{complete lattice path} supported on $A$. We call a lattice path \emph{connected (disconnected)} if
the union of its segments is connected (disconnected).
\end{definition}

For examples of lattice paths, consider Example \ref{ex:binodalcount1}.

Given a smooth or nodal tropical surface with Newton polytope $\Delta$ passing through points in Mikhalkin position, the set of edges in the dual subdivision of $\Delta$ corresponding to 2-dimensional cells containing one of the points forms a lattice path as above.
In the smooth case, we can use the lattice path to construct the dual subdivision of the Newton polytope by the smooth extension algorithm \cite[Section 3.3]{MaMaSh18}.

If we wish to count surfaces with singularities, we have fewer point conditions, which in turn corresponds to leaving out lattice points from the lattice path or to leaving out steps, i.e., segments, from the lattice path in the case of disconnected lattice paths.
While a smooth tropical surface has only unimodular tetrahedra in the dual subdivision, tropicalizations of singular surfaces contain other polyhedra in their dual subdivisions.
 
\begin{figure}
    \centering
    \begin{subfigure}[b]{0.18\textwidth}
    \centering
        \includegraphics[height=0.1\textheight]{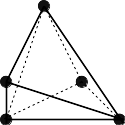}
        \caption*{circuit A}\label{fig:circuitA}
    \end{subfigure}
    \begin{subfigure}[b]{0.18\textwidth}
    \centering
        \includegraphics[height=0.09\textheight]{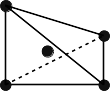}
        \caption*{circuit B}\label{fig:circuitB}
    \end{subfigure}
    \begin{subfigure}[b]{0.18\textwidth}
    \centering
        \includegraphics[height=0.08\textheight]{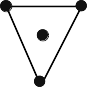}
        \caption*{circuit C}\label{fig:circuitC}
    \end{subfigure}
    \begin{subfigure}[b]{0.18\textwidth}
    \centering
        \includegraphics[height=0.08\textheight]{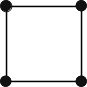}
        \caption*{circuit D}\label{fig:circuitD}
    \end{subfigure}
    \begin{subfigure}[b]{0.18\textwidth}
    \centering
        \includegraphics[height=0.1\textheight]{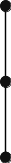}
        \caption*{circuit E}\label{fig:circuitE}
    \end{subfigure}
    \caption{Circuits that encode a single node inside a dual subdivision.}
    \label{fig:circuits}
\end{figure}

For one-nodal tropical surfaces, the possibilities have been fully classified in \cite{MaMaSh12} as follows. If a tropical surface contains exactly one node, then its dual subdivision contains exactly one of the circuits depicted in Figure \ref{fig:circuits}.
For circuit A and B, the tropical singularity is the vertex dual to the polytope. Circuit C encodes a singularity in the surface if it is the base triangle of either one or two tetrahedra in the subdivision. In the first case, the circuit is dual to an unbounded edge that contains the singularity at a distance from the vertex depending on the six coefficients associated to the lattice points of the tetrahedron containing the circuit C. In the second case, the circuit is dual to a bounded edge that contains the singularity either in its midpoint or at the ratio $3:1$. Circuit D, however, only encodes a singularity if it is the base parallelogram for two pyramids. The singularity is then the midpoint of the edge dual to the circuit.
Circuit E encodes a singularity if it has at least three neighboring points in one of the positions specified in \cite{MaMaSh12} forming at least two tetrahedra with the edge. The weighted barycenter of the 2-cell dual to the circuit is the node, where the weight is given by the choice of the three neighbors.

When the surface has multiple nodes, it is not yet well understood what the dual subdivision of the tropical surface can look like. One possibility is that the nodes are \emph{separated}.

\begin{definition}[\cite{BG20}]
We say two nodes are \emph{separated} if their tropicalizations are contained in cells dual to the one-nodal polytope complexes described above, and they intersect at most in a unimodular face.
\end{definition}

Two singularities can tropicalize to points on the tropical surface that are closer together. For example, this occurs for binodal tropical cubic surfaces satisfying point conditions in Mikhalkin position, see \cite{BG20}.
Unseparated nodes are encoded in the dual subdivision via larger polytope complexes, which have not been classified. In Section \ref{sec:binodalpolytopes} we see how polytopes with $6$ lattice points can encode two nodes that are tropically at the same point and coincide with a vertex of the surface.

When choosing points in Mikhalkin position and a lattice path through the Newton Polytope, we expect to obtain a sliced subdivision corresponding to a floor decomposed surface \cite{BeBrLo17}. This decomposition of the surface is used in the description of the surface via tropical floor plans. Tropical floor plans were first introduced in \cite{MaMaShSh19} and generalized in \cite{BG20}. As stated in \cite[Proposition 5.9]{MaMaShSh19}, a floor plan defines a unique tropical surface passing through points in Mikhalkin position.

As far as we know, there is no proof that multinodal surfaces passing through points in Mikhalkin position are always floor decomposed, neither have we come across a counterexample. We make the additional assumption that our surfaces will be floor decomposed, in order to be able to use tropical floor plans.

Floor plans are a tool for viewing the surface as a sequence of tropical plane curves. These curves can encode nodes of the surface. The structures appearing in a curve that could produce a node are called \emph{node germs}. 
\begin{definition}[\cite{MaMaShSh19}, Definition 5.1]\label{def:nodegerms}
Let $C$ be a plane tropical curve of degree $d$ passing through $\binom{d+2}{2}-2$ points in general position. A \textit{node germ} of $C$ is one of the following:
\begin{enumerate}
    \item  a vertex  
    dual to a parallelogram in its Newton subdivision,
    \item[(1')]  a midpoint of an edge of weight two
    dual to a pair of adjacent triangles of area 2 each sharing an edge of weight two in its Newton subdivision,
\item a horizontal or diagonal end (ray) of weight two,
\item a right or left \emph{string} (see below).
\end{enumerate}
If none of the points is contained in an end adjacent to the lower right or left vertex of the Newton polytope, then we can prolong the adjacent bounded edge in direction $(1,0)$ or $(-1,-1)$, respectively. This produces a family of curves, all of which pass through the points. The union of the two ends is called a \emph{right} or \emph{left} \emph{string} depending of the direction of the degree of freedom.
 See Figures \ref{fig:lef_string} and \ref{fig:right_string}. 
\begin{figure}[h]
    \centering
     \begin{subfigure}{.32\textwidth}
      \includegraphics[height=0.4\textwidth]{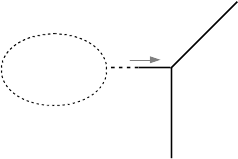}
      \caption{Right string}
      \label{fig:right_string}
    \end{subfigure}
        \begin{subfigure}{.32\textwidth}
      \includegraphics[height=0.4\textwidth]{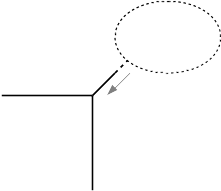}
      \caption{Left string}
      \label{fig:lef_string}
    \end{subfigure}{}
    \caption{Right and left strings.}
    \label{fig:strings}
\end{figure}{}
\end{definition}

We use the following definition of floor plans from \cite{BG20}, which is generalized from \cite{MaMaShSh19}. In \cite{MaMaShSh19}, floor plans required node germs to be in floors that are separated by a smooth floor. Here, we allow node germs in adjacent or the same floors.

\begin{definition}[\cite{BG20}, generalized from \cite{MaMaShSh19}]\label{def:floorplan}
		Let $Q_i$ be the projection of $q_i$ along the $x$-axis, into the $yz$-plane. A \emph{$\delta$-nodal floor plan $F$ of degree $d$} is a tuple $(C_d,\ldots,C_1)$ of plane tropical curves $C_i$ of degree $i$ called \emph{floors} together with a choice of indices $d\geq i_{\delta'}>\ldots>  i_1\geq1$ each assigned a natural number $k_j$ 
		such that $\sum_{j=1}^{\delta'}k_j=\delta$, where $0<\delta'\leq\delta$,  satisfying:
	\begin{enumerate}
		\item The curve $C_i$ passes through the following points: (we set $i_0=0, i_{\delta+1}=d+1.$)\begin{align*}
			\text{if } i_{\nu}>i>i_{\nu-1}:\,\,\,\,\,\,\,\, &Q_{\sum_{k=i+1}^d \binom{k+2}{2}-\delta+(\sum_{j: i > i_j} k_j)+1},...,Q_{\sum_{k=i}^d \binom{k+2}{2}-\delta+(\sum_{j: i > i_j} k_j)-1}, \\
			\text{if } i=i_{\nu}:\,\,\,\,\,\,\,\,\,\,\,\, &Q_{\sum_{k=i+1}^d \binom{k+2}{2}-\delta+(\sum_{j: i \geq i_j} k_j)+1},...,Q_{\sum_{k=i}^d \binom{k+2}{2}-\delta+(\sum_{j: i > i_j} k_j)-1}.
		\end{align*}
		\item The plane curve $C_{i_j}$ has $k_j$ node germs for every $j=1,\ldots,\delta'.$
		\item If  $C_{i_j}$  contains a left string as a node germ, then its horizontal end aligns either with a horizontal bounded edge of $C_{i_j+1}$ or with a $3$-valent vertex (where edges are counted with multiplicity) of $C_{i+1}$  not adjacent to a horizontal edge.
		\item  If $C_{i_j}$ contains a right string as a node germ, then its diagonal end aligns either with a diagonal bounded edge of $C_{i_j-1}$ or with a $3$-valent vertex (where edges are counted with multiplicity) of $C_{i_j-1}$ not adjacent to a diagonal edge.
		\item If $i_{d}=\delta'$, then the node germs of $C_d$ can only be diagonal ends of weight two or a right string.
		\item If $i_1=1$, then the node germ of $C_1$ is a left string.
	\end{enumerate}
\end{definition}

We use the term \emph{align} in connection with left and right strings  to mean that the horizontal ray of the left string or diagonal ray of the right string contains a given vertex or bounded edge. For examples of binodal floor plans, see the proof of the main theorem of \cite{BG20}.

We now describe how to compute the multiplicity of a floor plan.
This enables us to translate the problem of counting tropical surfaces through points to counting floor plans. This leverages the structure imposed by choosing the points to be in Mikhalkin position, since we now count curves (which are well-understood) and the ways they can interact to produce nodes (which is outlined by Definition \ref{def:floorplan}).
\begin{definition}[Generalised from Definition 5.4, \cite{MaMaShSh19}]\label{def:multiplicities}
Let $F$ be a $\delta$-nodal floor plan of degree $d$. Let $C^{*}_{i_j}$ be a node germ of $C_{i_j}$, we
define the following local complex multiplicity $\text{mult}_{\mathbb{C}}(C^{*}_{i_j})$:

\begin{enumerate}
\item If $C^{*}_{i_j}$ is dual to a parallelogram, then $\text{mult}_{\mathbb{C}}(C^{*}_{i_j}) =2$.
\item If $C^{*}_{i_j}$ is the midpoint of an edge of weight two, , then $\text{mult}_{\mathbb{C}}(C^{*}_{i_j}) =8$.
\item If  $C^{*}_{i_j}$ is a horizontal end of weight two, then $\text{mult}_{\mathbb{C}}(C^{*}_{i_j}) = 2(i_j + 1)$.
\item  If $C^{*}_{i_j}$ is a diagonal end of weight two, then $\text{mult}_{\mathbb{C}}(C^{*}_{i_j}) = 2(i_j - 1)$.
\item If $C^{*}_{i_j}$ is a left string whose horizontal end aligns with a horizontal bounded edge, then $\text{mult}_{\mathbb{C}}(C^{*}_{i_j}) = 2$.
\item If $C^{*}_{i_j}$ is a left string whose horizontal end aligns with a vertex not adjacent to a horizontal edge, then $\text{mult}_{\mathbb{C}}(C^{*}_{i_j}) = 1$.
\item  If $C^{*}_{i_j}$ is a right string whose diagonal end aligns with a diagonal bounded edge, then $\text{mult}_{\mathbb{C}}(C^{*}_{i_j}) = 2$.
\item If $C^{*}_{i_j}$ is a right string whose diagonal end aligns with a vertex not adjacent to a diagonal edge, then $\text{mult}_{\mathbb{C}}(C^{*}_{i_j})= 1$.

\end{enumerate}
The multiplicity of a $\delta$-nodal floor plan  $F$ is $\text{mult}_{\mathbb{C}}(F) = 
\prod_{j=1}^{\delta'}\prod_{\text{node germs of } C_{i_j}}\text{mult}_{\mathbb{C}}(C^{*}_{i_j}).$

\end{definition}

For examples of how to compute multiplicities of floor plans, see \cite{BG20}.

\begin{remark}
All pentatopes appearing in the dual subdivisions of degree $d$ surfaces passing through points in Mikhalkin position have multiplicity $1$, see \cite[Lemma 5.5]{MaMaSh18}. This is why the node germs that give rise to a pentatope in the subdivision (right and left string aligning with a vertex) are counted with multiplicity one for tropical floor plans of degree $d$. For more general surfaces, it is possible that pentatopes with higher multiplicity occur. In these cases, the multiplicity has to be determined from the point conditions and the discriminant. See \cite{MaMaSh18} for a more detailed explanation.
\end{remark}

\begin{example}\label{ex:binodalcount1} Floor plans can easily be generalized for surfaces that are not of degree $d$, i.e., their Newton polytope is not $d\Delta_3$. We demonstrate this by counting the binodal surfaces with Newton polytope $P$ (pictured below) passing through five general points.
   \begin{figure}[h]
       \centering
       \begin{subfigure}{0.45\textwidth}
       \centering
           \begin{align*}
     P=\conv\begin{pmatrix}
      0 & 0 & 0 & 0 & 1 & 1 & 1 & 1\\
      0 & 1 & 1 & 2 & 0 & 1 & 1 & 2\\
      1 & 0 & 1 & 1 & 0 & 0 & 1 & 0
     \end{pmatrix}
     \end{align*}  
       \end{subfigure}
       \begin{subfigure}{0.25\textwidth}
       \centering
           \includegraphics[width = 0.75\textwidth]{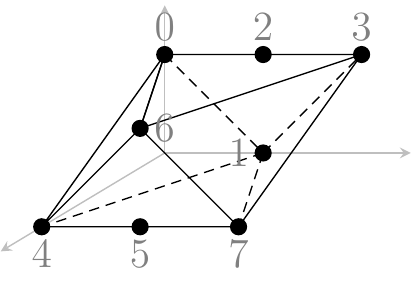}
       \end{subfigure}
   \end{figure}
   \addtocounter{figure}{-1}  
   
Since we are counting binodal surfaces, we look for lattice paths that go through six of the eight lattice points of $P$. Additionally such a lattice path must induce a subdivision dual to a binodal tropical surface. By exhaustive search, we find four possible lattice paths passing through six lattice points of $P$. These are pictured in red on the left in Figures \ref{fig:pentatopes}, \ref{fig:bipyramids} and \ref{fig:weights2}.  
 They give rise to five different floor plans due to different alignment options. The advantage of tropical floor plans is that they break down the dimension by $1$, so our tropical floor plans are curves in the plane. The floor plans corresponding to the four possible lattice paths are depicted on the right in Figures \ref{fig:pentatopes}, \ref{fig:bipyramids} and \ref{fig:weights2}. The green curve corresponds to the subdivision in the $x=0$ facet of $P$, the blue curve to the subdivision in the $x=1$ facet of $P$. We display the point conditions the curves satisfy by the black points.
 
 These floor plans give rise to separated nodes, so we can count them with their multiplicities from \cite{MaMaSh18}.
For the two floor plans in Figure \ref{fig:pentatopes}, we investigate the discriminant of the two pentatopes occurring. This computation yields that in both cases one pentatope has to be counted with multiplicity $2$ and the other with multiplicity $1$, so in total we count $2\cdot 1 + 2\cdot 1 = 4$ surfaces coming from these floor plans.

In Figure \ref{fig:bipyramids}, we see two lattice paths producing the same type of alignment. The two floor plans look nearly the same, only that the points of the point conditions are distributed differently on the two curves. The two alignments give each rise to a bipyramid in the subdivision, which has multiplicity $2$.  
So, we count $2\cdot(2\cdot 2)=8$ surfaces.

The floor plan in Figure \ref{fig:weights2} has multiplicity $2\cdot 2 =4$ as follows from \cite{MaMaSh18}.

In total, we obtain $16$ surfaces from the count of the tropical floor plans. This coincides with the degree of the binodal variety of surfaces with Newton polytope $P$, which can be computed using \texttt{OSCAR}  \cite{oscar,Code21}.

\begin{figure}[h]
    \centering
        \begin{subfigure}{0.2\textwidth}
        \includegraphics[width=1\textwidth]{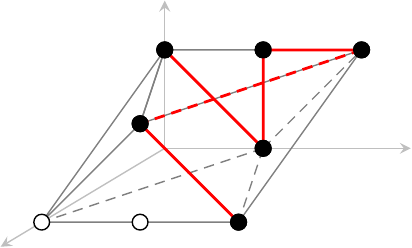}
    \end{subfigure}
    \begin{subfigure}{0.35\textwidth}
        \includegraphics[width=0.8\textwidth]{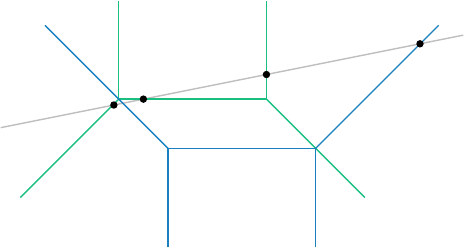}
    \end{subfigure} \begin{subfigure}{0.35\textwidth}
        \includegraphics[width=0.8\textwidth]{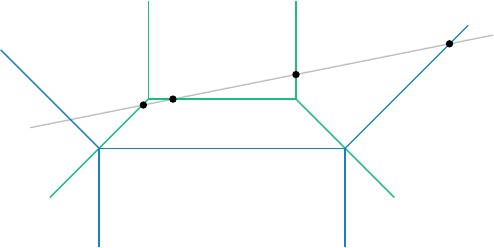}
    \end{subfigure}
        \caption{This lattice path allows for two alignments. Both correspond to subdivisions of the polytope that contain two pentatopes.}
    \label{fig:pentatopes}
 \end{figure}
 
 \begin{figure}[h]
    \centering
    \begin{subfigure}{0.2\textwidth}
        \includegraphics[width=1\textwidth]{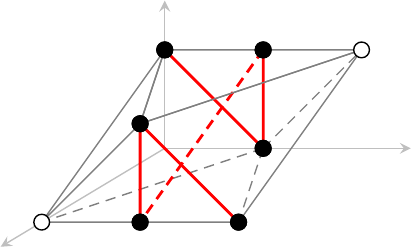}
    \end{subfigure}\quad
 \begin{subfigure}{0.4\textwidth}
        \includegraphics[width=0.9\textwidth]{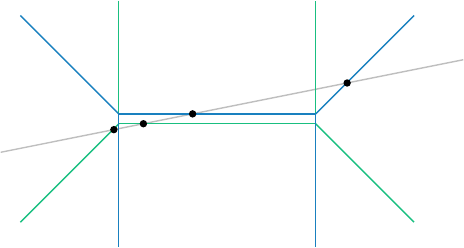}
    \end{subfigure}

\vspace{0.25cm}
    \begin{subfigure}{0.2\textwidth}
        \includegraphics[width=1\textwidth]{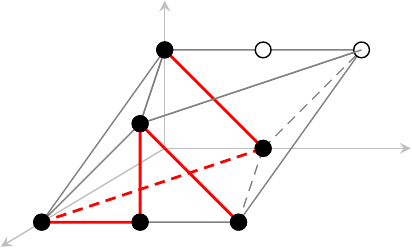}
    \end{subfigure}\quad \begin{subfigure}{0.4\textwidth}
        \includegraphics[width=0.9\textwidth]{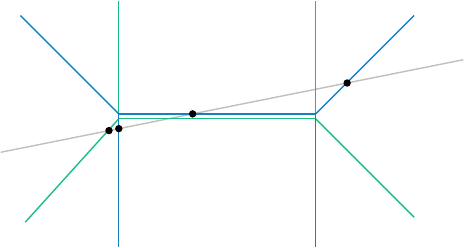}
    \end{subfigure}
        \caption{There are two lattice paths that allow an alignment that corresponds to two bipyramids in the dual subdivision.}
    \label{fig:bipyramids}
    \end{figure}
 \begin{figure}[h]
    \centering   
    \begin{subfigure}{0.3\textwidth}
    \centering
        \includegraphics[width=0.66\textwidth]{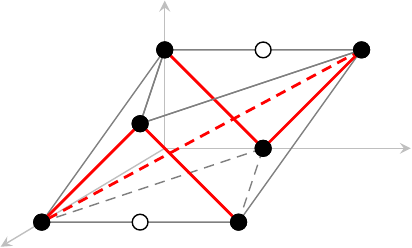}
    \end{subfigure}
    \begin{subfigure}{0.4\textwidth}
        \includegraphics[width=0.7\textwidth]{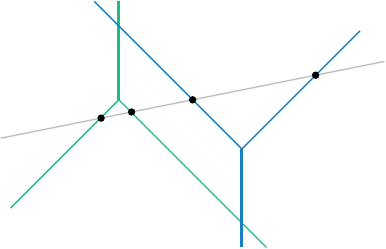}
    \end{subfigure}
    \caption{There is one lattice path for which the two nodes are coming from intersections of rays with weight two ends.}
    \label{fig:weights2}
\end{figure}

This example shows that for some surfaces the tropicalization only contains separated nodes, so that a complete count can be achieved using only the tools of separated nodes.

\end{example}

\section{Floor plans of plane curves}\label{sec:curves}

In this section we count multinodal plane curves by generalizing the definition of tropical floor plans for one-nodal plane curves from \cite{MaMaShSh19} to the multinodal case. We show that tropical floor plans of multinodal plane curves can be used as a counting tool to recover the Gromov-Witten numbers. The Gromov-Witten numbers give the number $N_{d,g}$ of plane curves of degree $d$ and genus $g$ passing through $3d-1+g$ points in general position. By the genericity of the points, all the curves counted are then nodal. By the degree-genus formula, each curve has $\delta = \tfrac{(d-1)(d-2)}{2}-g$ nodes. 

The following definition is inspired by the definition of vertically stretched points in \cite[Definition 3.4]{FM09}. 
\begin{definition}
A collection of finitely many points $(q_0,\ldots,q_N)$ in $\mathbb{R}^2$, $q_i =(x_i,y_i)$, is said to be in \emph{horizontally stretched position}, if they satisfy
\begin{align*}
    &x_0\ll x_1 \ll.\ldots \ll x_N, \\
    &y_0 <y_1 <\ldots <y_N, \\
&\max_{i\neq j} |y_i - y_j | \in (0,\epsilon), \text{ for } \epsilon \text{ small.}
\end{align*}
\end{definition} 

For counts of curves of degree $d$ with $\delta$ nodes in $\mathbb{P}^2$, we pick a horizontally stretched point configuration $q_1,...,q_n$ of $n= \binom{d+2}{2}-(\delta+1)$ points in $\mathbb{R}^2$. We denote by $Q_i$ the projection of $q_i$ to its second coordinate.

\begin{definition}
A \emph{tropical floor plan} for a tropical plane curve of degree $d$ with $\delta$ nodes consists of a tuple of tropical divisors $(D_d,D_{d-1},\ldots,D_1)$  on $\mathbb{R}$ where each $D_i$ is of degree $i$, together with a choice of indices $d\geq i_{\delta'} > ... > i_1\geq 1$ each assigned a natural number $k_j\leq i_j$ such that $\sum_{j=1}^{\delta'}k_j = \delta$, where $0<\delta'\leq\delta.$ 
The divisor $D_i$ is supported on the following points in $\mathbb{R}$, where we set $i_0=0, i_{\delta+1}=d+1$:\medskip
	
	\begin{tabular}{rl}
	 $i_j > i > i_{j-1}$ :&  $Q_{\binom{d+2}{2}-\binom{i+2}{2}-\delta+ \sum_{l=1}^{j-1}k_l+1},\ldots,Q_{\binom{d+2}{2}-\binom{i+1}{2}-\delta+\sum_{l=1}^{j-1}k_l-1}, $\\
	 $i=i_{j}$:& $Q_{\binom{d+2}{2}-\binom{i+2}{2}-\delta+ \sum_{l=1}^{j}k_l+1},\ldots,Q_{\binom{d+2}{2}-\binom{i+1}{2}-\delta+\sum_{l=1}^{j-1}k_l-1} $.\\
	\end{tabular}

\medskip

Furthermore, a divisor $D_{i_j}$ may have points of weight $w$, where $2\leq w \leq \delta+1$, if $i_j\notin\{1,d\}.$ The total number of points with higher weight is $\leq \delta$.
If $D_{i_j}$ contains $r$ points of weight $\geq 2$, then $k_j-r$ points of weight $1$ in $D_{i_j}$ have to align with a point of $D_{i_j-1}$ or of $D_{i_j+1}$.
\end{definition}

We give examples of tropical floor plans for tropical plane curves of degree 4 with two nodes in Example \ref{ex:count}.

\begin{definition}
We define the complex multiplicity $\mult{C}{F}$ of a floor plan $F$ with $r$ points of weights $w_1,...,w_r$ $\geq 2$ to be $\mult{C}{F}=\prod_{j=1}^r w_j^2 $.
\end{definition}
This extends the definition of tropical floor plans for one-nodal curves from \cite{MaMaShSh19}.

\begin{remark} For the divisors in a tropical floor plan of a tropical plane curve the following holds:
If $i$ is not in the index tuple, then $D_i$ is supported on $i$ distinct points, which are the $Q_t$ as above.
If $i=i_j$, then $D_i$ is supported on $i-k_j$ distinct points of the $Q_t$. If the last point of $D_i$ is $Q_s$, then the first point of $D_{i-1}$ is $Q_{s+2}$, so that there is always exactly one point, $Q_{s+1}$, between $D_i$ and $D_{i-1}$.
\end{remark}

\begin{theorem}
\label{mainthm1}
    The number of tropical floor plans of degree $d$ with $\delta$ nodes (i.e., genus $g=\tfrac{(d-1)(d-2)}{2}-\delta$) counted with complex multiplicity equals the Gromov-Witten number $N_{d,g}$, which counts genus $g$ degree~$d$ curves passing through $3d-1+g$ general points: 
     $$\sum_{F \text{ trop. floor plan}\atop \text{of trop curve of degree } d \text{ with } \delta \text{ nodes}} \mult{C}{F}=N_{d,g}. $$
\end{theorem}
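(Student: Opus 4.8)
\emph{Proof idea.}
The plan is to reduce the statement to Mikhalkin's correspondence theorem and then to identify, for points in horizontally stretched position, the tropical plane curves through them with tropical floor plans, compatibly with multiplicities.

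\emph{Step 1: reduction to a tropical count.} By Mikhalkin's correspondence theorem \cite{Mik} (see also \cite{FM09,BeBrLo17}), for any configuration $\mathcal{P}$ of $3d-1+g$ points in tropical general position in $\mathbb{R}^2$ one has $N_{d,g}=\sum_{C}\operatorname{mult}(C)$, the sum ranging over tropical plane curves $C$ of degree $d$ and genus $g$ through $\mathcal{P}$, with $\operatorname{mult}(C)=\prod_V\operatorname{mult}(V)$ the product over the vertices $V$ of $C$ of the normalized areas of their dual polygons. Since $g=\tfrac{(d-1)(d-2)}{2}-\delta$ forces each such $C$ (for generic $\mathcal{P}$) to have exactly $\delta$ nodes, and $n=3d-1+g=\binom{d+2}{2}-(\delta+1)$ matches the number of points prescribed in the definition of a floor plan, it suffices to evaluate the right-hand side for one convenient configuration. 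I would first check that a horizontally stretched configuration lies in tropical general position; this is the planar analogue of vertically stretched points and can be verified either directly (the extreme spreading in the $x$-direction leaves only finitely many combinatorial types and forces each to be reduced of the expected dimension) or by the argument of \cite[Definition~3.4]{FM09}. Thus it remains to show $\sum_{C}\operatorname{mult}(C)=\sum_{F}\mult{C}{F}$ with $\mathcal{P}$ horizontally stretched.

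\emph{Step 2: the bijection.} I would construct a multiplicity-preserving bijection between tropical curves $C$ of degree $d$ and genus $g$ through such a $\mathcal{P}$ and $\delta$-nodal floor plans of degree $d$. In one direction, the extreme horizontal stretching forces $C$ to be floor decomposed: the dual subdivision of $d\Delta_2$ is refined by the $d-1$ lines $\{x=1\},\dots,\{x=d-1\}$ into $d$ lattice trapezoids $\Sigma_d,\Sigma_{d-1},\dots,\Sigma_1$, where $\Sigma_i=d\Delta_2\cap\{d-i\le x\le d-i+1\}$ and the induced subdivision of $\Sigma_i$ encodes a degree-$i$ tropical divisor $D_i$ on $\mathbb{R}$. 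The $\delta$ units by which the genus drops from $\binom{d-1}{2}$ localize inside these slabs, each producing exactly one of the local configurations of Definition~\ref{def:nodegerms}: a length-$w$ edge on a slicing line shared by $\Sigma_{i_j}$ and $\Sigma_{i_j\pm1}$ (a weight-$w$ point of $D_{i_j}$), a weight-$w$ boundary end, or a free point of $D_{i_j}$ aligned across a slicing line with a point of $D_{i_j\mp1}$ (a string). Recording which slabs $D_{i_j}$ carry node germs, their number $k_j$, and their weights recovers the index data and weighted points of a floor plan; one must then verify the bookkeeping that the number of point conditions falling into each slab agrees with the binomial-coefficient ranges in the definition above. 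Conversely, each floor plan is realized by a unique tropical curve through $\mathcal{P}$: one builds the subdivision slab by slab, the $i$-th slab being determined by $D_i$ together with the prescribed alignments of its weight-$1$ node germs with the neighbouring slabs, in the same manner as the smooth extension algorithm of \cite{MaMaSh18} and the proof of \cite[Proposition~5.9]{MaMaShSh19}, one dimension lower.

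\emph{Step 3: multiplicities, and the main obstacle.} Under this bijection, every vertex of $C$ not belonging to a node germ is dual to a unimodular triangle and contributes $1$; a weight-$w$ point of a floor is dual to a length-$w$ edge flanked by two triangles of normalized area $w$, hence the two endpoints of the corresponding weight-$w$ edge of $C$ contribute $w\cdot w=w^2$; and a weight-$1$ node germ aligned with a neighbouring slab produces the one-dimensional analogue of a pentatope, of multiplicity $1$. Therefore $\operatorname{mult}(C)=\prod_j w_j^2=\mult{C}{F}$, and summing over all curves, respectively all floor plans, yields the theorem. The main obstacle is precisely Step 2, and within it the two realizability halves: proving that horizontal stretching really forces every curve through $\mathcal{P}$ to be slab-decomposed in the stated shape (so that the floor-plan data is well defined and exhaustive), and that conversely every floor plan lifts to exactly one such curve. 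This is where the hypothesis on the points does its work, and it requires a careful local analysis of the dual subdivision near each marked point and near each node germ, together with the somewhat delicate confirmation that the point-distribution indices built into the definition are the ones actually imposed by horizontally stretched position. An alternative to Step 2 would be to match these floor plans directly with the (marked) floor diagrams of \cite{FM09,BeBrLo17}, which are already known to enumerate $N_{d,g}$; but this still amounts to the same combinatorial translation between the ``slab'' and ``floor'' points of view.
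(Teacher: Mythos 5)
Your proposal is correct and follows essentially the same route as the paper: both identify floor plans with floor-decomposed tropical curves through horizontally stretched points and match the multiplicity of a weight-$w$ point with the $w\cdot w$ contributed by the two triangles flanking the dual edge of lattice length $w$ (while aligned free points give parallelograms/pentatope analogues of multiplicity $1$). The only difference is where the heavy lifting of your Step~2 lands: the paper does not prove the slab-decomposition and realizability statements directly, but instead cites Gathmann--Markwig's column-wise subdivisions (\cite[Section 3, Remark 3.9]{GaMa07}) as already establishing that these are exactly the subdivisions dual to curves through horizontally stretched points and that their weighted count equals $N_{d,g}$ --- essentially the ``alternative'' you mention in your last sentence.
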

\begin{proof}
First we show that there is a bijection between tropical floor plans for curves of degree $d$ and certain column-wise subdivisions of the Newton polytope of degree $d$ curves $d\Delta_2=\conv(\{(0,0),(0,d),(d,0)\})$.
By \cite[Section 3]{GaMa07} the weighted count of these column-wise subdivisions recovers the Gromov-Witten invariant $N_{d,g}$. 

The column-wise subdivisions of $d\Delta_2$ considered in \cite{GaMa07} satisfy special properties stated in \cite[Remark 3.9]{GaMa07}. There properties ensure that they are exactly the subdivisions dual to tropical plane curves through horizontally stretched points defined by a tropical floor plan.

Hence we have the following bijection:
A tropical floor plan uniquely defines a floor decomposed tropical plane curve passing through the horizontally stretched points. Via duality these curves are in bijection to the column-wise subdivisions of the Newton polytope $d\Delta_2$ considered in \cite{GaMa07}. It remains to show that the multiplicity of a tropical floor plan coincides with the multiplicity assigned to such a column-wise subdivision.

The column-wise subdivisions have multiplicity $\prod_{\text{all triangles}}(2\cdot \text{area of triangle})$. So only triangles of area $2w\geq1$ contribute to the multiplicity of the subdivision. These arise from edges of length $w\geq 2$ inside the subdivision, and each such edge is adjacent to two triangles of area $\frac{w}{2}$. 
So the multiplicity of the subdivision can also be given as $\prod_{\text{edge of length } w_j\geq 2}w_j^2$. 

A point of weight $w\geq 2$ in a divisor $D_j$ in the tropical floor plan corresponds to a vertical edge of length $w$ in the dual subdivision of the plane curve induced by the floor plan. Hence, the multiplicity of columnwise subdivisions coincides with the complex multiplicity assigned to the tropical floor plans.

Note, that a non-fixed point of a divisor aligning with another point from a neighboring divisor produces a parallelogram in the dual subdivision. Both for tropical floor plans and for column-wise subdivisions, these do not contribute to the multiplicity.

\end{proof}

\begin{example} 
\label{ex:count}
We now use floor plans to count the 225 binodal plane curves of degree 4 passing through 12 general points \cite[Figure 1]{FM09}.
We distinguish cases by the index tuple encoding the position of the divisors that are not entirely fixed by the point conditions or that contain points of higher weight. We do the case $i_1=4, i_2=3$ with $k_1=k_2=1$ in detail as an example. The other index choices and their contributions are listed in Table \ref{tab:floorplan}. For each case one considers the distribution of points of higher weights in the divisors and, if applicable, the possible alignment options of the non-fixed points in the divisors. Then we compute the multiplicities for each case. 

	Let $i_1=4,i_2=3$. In this case the divisors $D_3$ and $D_4$ are not completely fixed by the point conditions. The divisor $D_4$ passes through $Q_1,\,Q_2$ and $Q_3$. As its degree is $4$, there is one more point in $D_4$ and it must align with a point of $D_3$ and thus depends on the shape of $D_3$. Assuming $D_3$ has a point of weight $2$, we have only one alignment choice as the free point in $D_4$ has to align with a weight $1$ point in $D_3$. There are two choices for a weight $2$ point in $D_3$: $Q_5$ or $Q_6$. This gives \textbf{$2\cdot4=8$} curves.
	If $D_3$ does not have a point of weight $2$, it contains a third point. This could align with a point in $D_2$, giving $3\cdot 2=6$ curves. It can also  align with a point of $D_4$. In that case, there are $2$ alignment possibilities for the point of $D_4$ with $D_3$ (since we cannot align the two aligning points of the two floors). For the point in $D_3$, there are $3$ alignment options in $D_4$, giving $2\cdot3=6 $ curves.
	In total, this contributes $8+6+6=20$ curves. 
	All floor plans with indices $i_1=4,i_2=3$ can be seen in Figure \ref{fig:(4,3)}.

	 \begin{minipage}{\textwidth}
  \begin{minipage}[b]{0.69\textwidth}
    \centering
    \includegraphics[width=\textwidth]{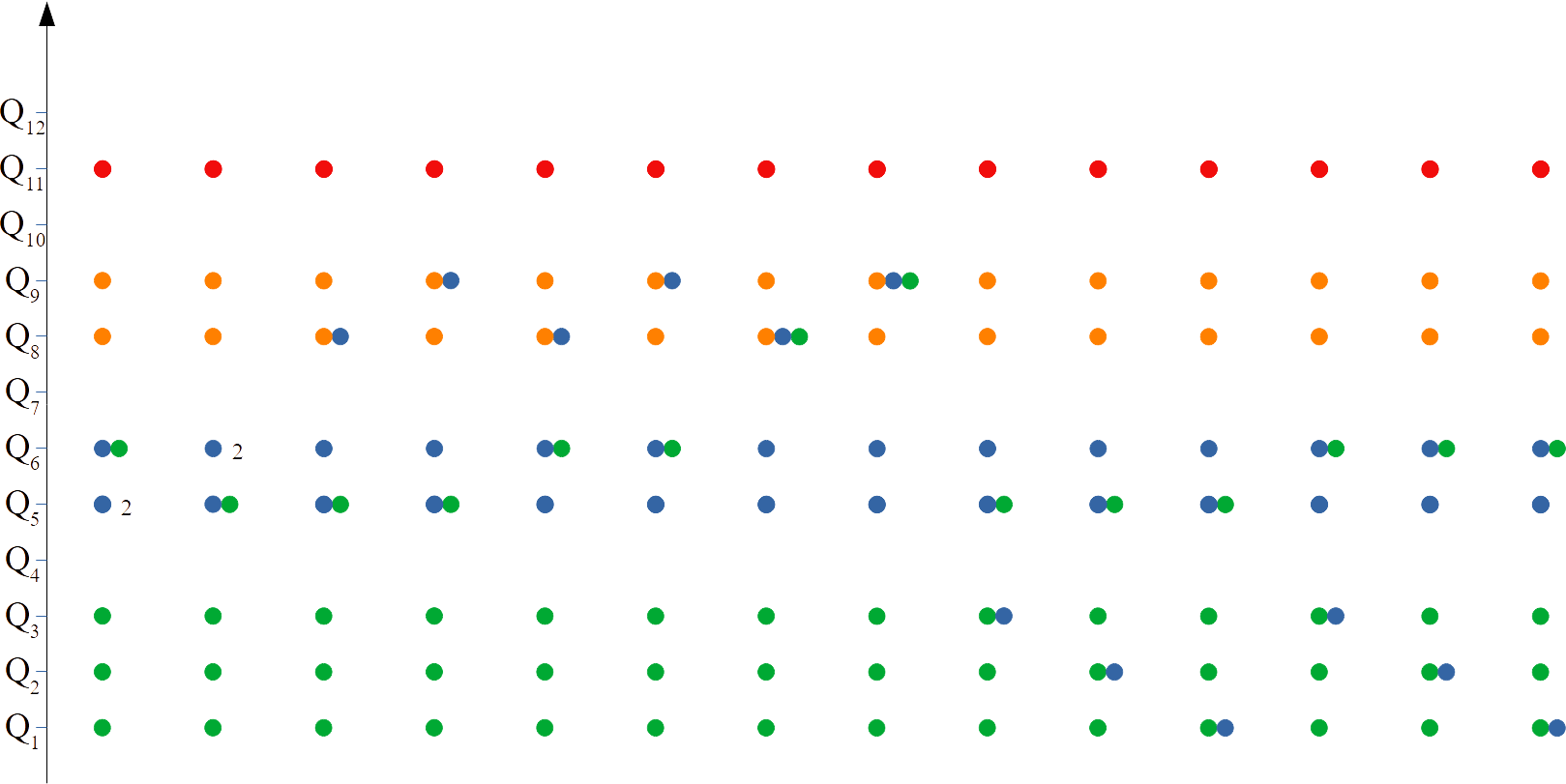}
    \captionof{figure}{Floor plans for curves of degree $4$ with $2$ nodes and index tuple $(4,3)$. Here, $D_1$ is pictured in red, $D_2$ is pictured in orange, $D_3$ is pictured in blue, and $D_4$ is pictured in green.}
    \label{fig:(4,3)}
  \end{minipage}
  \hfill
  \begin{minipage}[b]{0.29\textwidth}
    \centering
    \begin{tabular}{cc}\hline
    indices & count \\ \hline
    $i_1=4$,$k_1=2$ & $3$ \\
    $i_1=3$,$k_1=2$ & $48$ \\
    $i_1=2$,$k_1=2$ & $6$ \\
    $i_1=4,i_2=3$ & $20$ \\
    $i_1=4,i_2=2$ & $24$ \\
    $i_1=4,i_2=1$ & $6$ \\
    $i_1=3,i_2=2$ & $84$ \\
    $i_1=3,i_2=1$ & $28$ \\
    $i_1=2,i_2=1$ & $6$ \\
 \hline
 sum & $225$
      \end{tabular}
      \captionof{table}{}
      \label{tab:floorplan}
    \end{minipage}
  \end{minipage}

\end{example}

\section{Asymptotics}\label{sec:asymptotics}

In \cite{MaMaShSh19}, the authors state the known fact that the family of complex projective surfaces in $\mathbb{P}^3$ of degree $d$ with $\delta$ nodes as their only singularities is of degree
$$
N_{\delta, \mathbb{C}}^{\mathbb{P}^3}(d) = \frac{4^\delta d^{3 \delta}}{\delta !} - \frac{3 \cdot 4^\delta}{\delta!}d^{3\delta - 1} + \mathcal{O}(d^{3 \delta - 2}).
$$
Tropical floor plans are introduced in \cite{MaMaShSh19} to count tropical surfaces where the node germs are at least one floor apart. They show that these account for $\frac{4^\delta d^{3 \delta}}{\delta !} + \mathcal{O}(d^{3 \delta - 1})$ algebraic surfaces, thus recovering the first term of the complex count asymptotically. We will denote by $O_{\delta}(d)$ the number of these \textbf{O}riginal floor plans. In \cite{BG20} we considered floor plans of binodal cubic surfaces where there is no constraint on which floors the node germs may appear. However, some floor plans will produce tropical surfaces in which the nodes are unseparated. In these cases, the multiplicities and possibilities are unknown, and so we do not know how to count them. Let $S_{\delta}(d)$ denote the number of tropical degree $d$ surfaces with $\delta$ \textbf{S}eparated nodes, as counted in \cite{BG20} for the $d=3$, $\delta=2$ case. 

We now introduce artificial $\delta$-nodal floor plans. These do not count any true multinodal tropical surfaces, but we use them as a counting tool; they provide an upper bound for $S_{\delta}(d)$.

\begin{definition}
An \emph{artificial} $\delta$-nodal floor plan of degree $d$ is a tuple of 1-nodal floor plans $(P_1, \ldots, P_\delta)$ where each floor plan $P_i$ has degree $d$ and where the node germ of $P_i$ appears in a higher or the same floor as the node germ of $P_{i+1}$. The \emph{multiplicity} of an artificial floor plan is the product of the multiplicities of the $P_i$. Let $A_{\delta}(d)$ denote the number of \textbf{A}rtificial floor plans counted with multiplicities.
\end{definition}

We think of artificial $\delta$-nodal floor plans as simulating how many surfaces there would be if the nodes did not interact with one another at all, and were not constrained to be separated by a smooth floor.
It is clear that $A_{1}(d) = S_1(d) = O_1(d) =  N_{1, \mathbb{C}}^{\mathbb{P}^3}(d)$ from the definitions. Additionally, in general we have an inequality
$$
A_{\delta}(d) \geq S_\delta(d) \geq O_\delta(d), \text{ where}
$$
\begin{itemize}
 \item   $O_{\delta}$   counts multiplicities of floor plans with $\delta$ nodes separated by a smooth floor, 
 \item $S_{\delta} $ counts multiplicities of floor plans with $\delta$ \textbf{S}eparated nodes, 
 \item $A_{\delta} $ counts multiplicities of \textbf{A}rtificial $\delta$ nodal floor plans. 
\end{itemize}
The floor plans counted by $O_\delta(d)$ are always counted by $S_\delta(d)$.
Contributions of the node germs as in $S_\delta(d)$ are always overcounted by the corresponding artificial floor plan in $A_{\delta}(d)$. For example, in \cite[Case (7c,9f)]{BG20} illustrated in Figure \ref{fig:(7c,9f)}, we have that $S_\delta(d)$ and $A_{\delta}(d)$ count the same number of alignments, but the alignment of the left string with a vertex not adjacent to a horizontal edge (counted by $S_\delta(d)$) has multiplicity 1, whereas $A_{\delta}(d)$ counts an alignment with a horizontal bounded edge with multiplicity 2. This horizontal bounded edge is replaced by the vertex dual to the triangle without vertical edges in \cite[Case (7c,9f)]{BG20}.
\begin{figure}
\centering
\begin{subfigure}{.24\textwidth}
  \centering
  \includegraphics[height = 0.9 in]{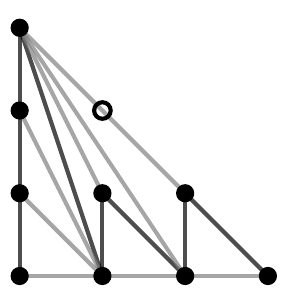}
\end{subfigure}%
\begin{subfigure}{.12\textwidth}
  \centering
  \includegraphics[width=.9\linewidth]{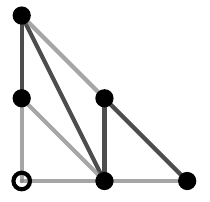}
\end{subfigure}%
\caption{The dual subdivisions to the nodal floors in case \cite[Case (7c,9f)]{BG20}. In the cubic part, the node germ induces a triangle without vertical edges. Without the node germ, there would be two bounded horizontal edges in the cubic floor for the left string in the conic floor to align with.}\label{fig:(7c,9f)}
\end{figure}
As another example, right strings have the maximum possible number of alignment options when the adjacent floor is smooth, so $A_{\delta}(d)$ counts more cases than $S_\delta(d)$.

We will now show that for small $\delta$, the overcount $A_\delta(d)$ is still too small to produce the second coefficient of $N_{\delta, \mathbb{C}}^{\mathbb{P}^3}(d)$. Therefore, relaxing the notion of a floor plan to allow nodes to be in the same or adjacent floors is not enough to produce the second order coefficient in the above count. Thus, the 
unseparated nodes contribute in degree $d^{3 \delta - 1}$.

\begin{theorem}
\label{thm:asymptotic}
For $\delta = 2,3$, there are at most
$$
A_2(d)= 8 d^6 - \frac{168}{5} d^5 + \mathcal{O}(d^4), \ \ \ \ \ 
A_3(d) =\frac{32}{3}d^9 - \frac{1341}{35} d^8 + \mathcal{O}(d^7),
$$
surfaces with separated nodes.
Therefore, surfaces with separated nodes are insufficient to asymptotically count binodal and trinodal surfaces up to two degrees. 
\end{theorem}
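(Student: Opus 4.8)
The plan is to reduce $A_\delta(d)$ to a per‑floor count of one‑nodal floor plans and then evaluate the resulting polynomial in $d$. For $1\le i\le d$ let $f(i,d)$ be the number of degree‑$d$ one‑nodal floor plans whose node germ lies in the floor $C_i$, each weighted by its multiplicity from Definition~\ref{def:multiplicities}; then $p_1:=\sum_{i=1}^d f(i,d)=A_1(d)=N_{1,\mathbb{C}}^{\mathbb{P}^3}(d)=4(d-1)^3$. By definition an artificial $\delta$‑nodal floor plan is an ordered tuple $(P_1,\dots,P_\delta)$ of degree‑$d$ one‑nodal floor plans whose node‑germ floor indices are weakly decreasing, with multiplicity the product of the $\text{mult}(P_j)$; grouping such tuples by their weakly decreasing sequence of floor indices gives at once
\[
A_\delta(d)=h_\delta(f(1,d),\dots,f(d,d)),
\]
the complete homogeneous symmetric polynomial of degree $\delta$. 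Expanding $h_\delta$ in the power sums $p_k=\sum_{i=1}^d f(i,d)^k$ gives $A_2(d)=\tfrac12(p_1^2+p_2)$ and $A_3(d)=\tfrac16(p_1^3+3p_1p_2+2p_3)$. Since $p_1=4(d-1)^3$ is known, the statement comes down to computing $p_2$ to order $d^4$ and checking $p_3=\mathcal{O}(d^7)$ — that is, to understanding the polynomial $f(i,d)$.

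Next I would compute $f(i,d)$. For interior floors the combinatorial type of the one‑nodal curve $C_i$ and of its smooth neighbours $C_{i\pm1}$ (the unique smooth floor through points in horizontally stretched / Mikhalkin position, cf.~\cite{MaMaSh18}) is fixed by the degree, so $f(i,d)=f(i)$ is independent of $d$ away from finitely many small $i$ and from the two extreme floors $i=1,d$, where Definition~\ref{def:floorplan}(5),(6) restricts the admissible node germs and a neighbour is missing; those exceptional floors contribute only $\mathcal{O}(d^4)$ to $p_2$ and $\mathcal{O}(d^6)$ to $p_3$ and get absorbed into the error. To get $f(i)$ one runs through the node‑germ types of Definition~\ref{def:nodegerms}: the internal germs (parallelogram, midpoint of a weight‑two edge, horizontal or diagonal weight‑two end) are controlled by the count of one‑nodal tropical plane curves of degree $i$ through horizontally stretched points, re‑weighted by the surface multiplicities of Definition~\ref{def:multiplicities}; a left or right string germ is counted by multiplying the (bounded) number of degree‑$i$ curves carrying such a string by the number of legal alignment targets — bounded horizontal/diagonal edges, and trivalent vertices not adjacent to a horizontal/diagonal edge — inside the $\Theta(i^2)$‑vertex smooth floor $C_{i\pm1}$. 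Since $\sum_i f(i)$ must reproduce $4(d-1)^3$ and $f$ is polynomial in $i$, that polynomial has degree $2$ with leading term $12i^2$, so $p_2=144\sum_{i=1}^d i^4+\mathcal{O}(d^4)=\tfrac{144}{5}d^5+\mathcal{O}(d^4)$ and $p_3=12^3\sum_{i=1}^d i^6+\mathcal{O}(d^6)=\mathcal{O}(d^7)$. Substituting and expanding $4(d-1)^3$, $16(d-1)^6$, $64(d-1)^9$ then yields the stated $A_2(d)=8d^6-\tfrac{168}{5}d^5+\mathcal{O}(d^4)$ and $A_3(d)=\tfrac{32}{3}d^9-\tfrac{1341}{35}d^8+\mathcal{O}(d^7)$; in the binodal case the bare $12i^2$ already suffices, while pinning the trinodal second coefficient needs $f(i,d)$ — and the boundary floors $i=1,d$ — to one further order, which is a finite computation.

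For the concluding assertion I would compare with the known $N_{\delta,\mathbb{C}}^{\mathbb{P}^3}(d)=\tfrac{4^\delta}{\delta!}d^{3\delta}-\tfrac{3\cdot4^\delta}{\delta!}d^{3\delta-1}+\mathcal{O}(d^{3\delta-2})$, i.e.\ $8d^6-24d^5+\mathcal{O}(d^4)$ for $\delta=2$ and $\tfrac{32}{3}d^9-32d^8+\mathcal{O}(d^7)$ for $\delta=3$. Because $A_\delta(d)\ge S_\delta(d)\ge O_\delta(d)$, the number of $\delta$‑nodal surfaces through points in Mikhalkin position whose tropicalization has unseparated nodes is at least $N_{\delta,\mathbb{C}}^{\mathbb{P}^3}(d)-A_\delta(d)$, which is $\tfrac{48}{5}d^5+\mathcal{O}(d^4)>0$ for $\delta=2$ and $\tfrac{221}{35}d^8+\mathcal{O}(d^7)>0$ for $\delta=3$ once $d$ is large. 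Hence $S_\delta(d)<N_{\delta,\mathbb{C}}^{\mathbb{P}^3}(d)$ already at order $d^{3\delta-1}$, two degrees below the leading term, so floor plans with only separated nodes cannot recover the binodal or trinodal surface count asymptotically to that order — the final claim of Theorem~\ref{thm:asymptotic}.

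The hard part will be the enumeration behind $f(i,d)$: deciding exactly which trivalent vertices and bounded edges of a smooth neighbouring floor a left or right string may align with, attaching the correct surface multiplicity (including the string alignments that become pentatopes), and handling by hand the degenerate extreme floors $i=1,d$ and the few small $i$. Once $f(i,d)$ is in hand, the symmetric‑function bookkeeping and the Faulhaber‑type summation of $\sum i^4$ and $\sum i^6$ are routine.
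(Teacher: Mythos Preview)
Your approach is essentially the paper's: both isolate the per-floor weighted contribution $f(i,d)$ of a single node germ and then sum $\prod_j f(i_j,d)$ over weakly decreasing $\delta$-tuples of floor indices, which you package as $A_\delta(d)=h_\delta(f(1,d),\ldots,f(d,d))$ and expand via Newton's identities. The paper obtains $f(i)=12i^2-18i+\mathcal{O}(1)$ (with the special values $3d^2-7d$ at $i=d$ and $\mathcal{O}(1)$ at $i=1$) by explicitly running through the node-germ types and then evaluates the nested sums directly; your shortcut of feeding in the exact $p_1=A_1(d)=4(d-1)^3$ and only the leading $12i^2$ is a slightly slicker execution of the same computation and in fact already pins the $d^{3\delta-1}$ coefficient for both $\delta=2$ and $\delta=3$, so the ``one further order'' you flag for the trinodal case is not actually needed.
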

This is equivalent to the lower bound for the contribution of unseparated nodes as stated in the introduction.

\begin{proof}

 We begin by counting the multiplicity of a given artificial floor plan $I$. Let $(i_1, \ldots, i_\delta)$ be the weakly decreasing sequence which records the floors of the node germs of $I$.

We now calculate the contribution from the presence of a node germ in floor $i_j$. This will be a polynomial in $i_j$. When counting, we will only consider the top two orders in $i_j$, because these are the only terms that will impact the top two degrees in our asymptotic count.
Given a node germ in floor $i_j$, there are several possibilities for what kind of node germ it is:
\begin{enumerate}
    \item Parallelogram ($i_j \not = 1,d$). By Proposition 4.6 of \cite{MaMaShSh19}, the number of one-nodal curves of degree $i_j$ with a parallelogram is $3 i_j^2 - 6 i_j + 3$, and the complex multiplicity of such a node germ is 
    2. This case contributes $6 i_j^2 - 12 i_j + \mathcal{O}(1)$ to our count.
    \item Horizontal / diagonal end (if $i_j = d$, only a diagonal end of weight 2 is permitted). The number of curves of degree $d$ with a weight 2 horizontal end is $i_j-1$, because there are that many possible locations for a weight 2 horizontal end. This is the same for diagonal ends. For horizontal ends, the multiplicity is $2(i_j + 1)$ by Definition 5.4 of \cite{MaMaShSh19} and for diagonal ends it is $2(i_j-1)$. Together, this contributes $4 i_j^2 - 4i_j + \mathcal{O}(1)$ to our count. 
    \item Left string / right string (If $i_j = d$ then there can only be a right string, and if $i_j = 1$ there can only be a left string). 
    If it is a left string, it can align with any horizontal bounded edge of $C_{i_j+1}$, and since the latter is a smooth tropical plane curve of degree $i_j+1$ satisfying horizontally stretched point conditions, there are $i_j+\ldots+1=\frac{1}{2}(i_j^2+i_j)$ such edges. For each, the multiplicity is 2.
     Right strings can either meet a diagonal bounded edge of $C_{i_j-1}$ of which there are $i_j-2$ many (and each is counted with multiplicity 2), or a vertex not adjacent to a diagonal edge, of which there are $2(i_j-3+i_j-4+\ldots+1)= (i_j-3)^2+(i_j-4) = i_j^2 - 5i_j + \mathcal{O}(1)$ many. 
      In total, this gives $2i_j^2 - 2i_j + \mathcal{O}(1)$ to our count.
\end{enumerate}
Thus, a node germ in floor $i_j$ for $i_j \not = 1,d$ contributes $12i_j^2 - 18 i_j + \mathcal{O}(1)$. If $i_j = d$, then it contributes $3i_j^2 - 7 i_j + \mathcal{O}(1)=3d^2 - 7 d + \mathcal{O}(1)$, and if $i_j = 1$, then it contributes $i_j^2 + i_j + \mathcal{O}(1) = \mathcal{O}(1)$.

Suppose $\delta = 2$. We sum over all choices of $i_1, i_2$ to obtain the count. We ignore the possibility that there is ever a node in the first floor because this will not contribute to the top two terms asymptotically.
We have

\begin{align*}
A_2(d) &= \sum_{i_1 = 2}^{d-1} \sum_{i_2 = 2}^{i_1} \prod_{j=1}^2 (12 i_j^2 - 18 i_j)
+ (3 d^2 - 7 d) \sum_{i_2 = 2}^{d-1} (12 i_2^2 - 18 i_2)
+ (3 d^2 - 7 d)^2\\
 &= 8 d^6 - \frac{168}{5} d^5 + \mathcal{O}(d^4).
\end{align*}

Suppose $\delta = 3$. We sum over all choices of $i_1, i_2, i_3 > 1$ to obtain the count. We ignore the possibility of  a node in the first floor because this will not contribute to the asymptotic count. We have

\begin{align*}
  A_3(3) =& \sum_{i_1 = 2}^{d-1} \sum_{i_2 = 2}^{i_1} \sum_{i_3 = 2}^{i_2} \prod_{j=1}^3 (12 i_j^2 - 18 i_j)
    + (3 d^2 - 7 d) \sum_{i_2 = 2}^{d-1}\sum_{i_3 = 2}^{i_2} \prod_{j=2}^3 (12 i_2^2 - 18 i_2)\\
   & + (3 d^2 - 7 d)^2 \sum_{i_3 = 2}^{d-1} (12 i_2^2 - 18 i_2)
    + (3 d^2 - 7 d)^3 \\
    =&\frac{32}{3}d^9 - \frac{1341}{35} d^8 + \mathcal{O}(d^7).
\end{align*}

\end{proof}

We obtain the following as an immediate corollary.

\begin{corollary}
There are at least $ \frac{48}{5} d^5 + \mathcal{O}(d)$ binodal surfaces of degree $d$ with unseparated nodes.
There are at least $\frac{221}{35} d^8 + \mathcal{O}(d)$ trinodal surfaces of degree $d$ with unseparated nodes.
\end{corollary}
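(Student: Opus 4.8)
The plan is to subtract the overcount $A_\delta(d)$ from the known complex degree $N_{\delta,\mathbb{C}}^{\mathbb{P}^3}(d)$ and read off the leading terms. Concretely, under the standing assumption that every $\delta$-nodal surface of degree $d$ through points in Mikhalkin position is floor decomposed, each such surface is counted by a $\delta$-nodal floor plan, and its tropicalization has either separated or unseparated nodes. Hence the correspondence theorem underlying \cite{MaMaShSh19,BG20} gives a decomposition
\begin{equation*}
N_{\delta,\mathbb{C}}^{\mathbb{P}^3}(d) = S_\delta(d) + U_\delta(d),
\end{equation*}
where $U_\delta(d)$ denotes the (multiplicity-weighted) number of degree $d$ surfaces through Mikhalkin points whose tropicalization has unseparated nodes.

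Next I would invoke the inequality $S_\delta(d) \le A_\delta(d)$ established in the discussion preceding Theorem~\ref{thm:asymptotic}: every floor plan with separated nodes arises from an artificial $\delta$-nodal floor plan, and in each such case the artificial floor plan is counted with multiplicity at least as large as the true contribution (the overcount being caused, e.g., by strings aligning with vertices being replaced by alignments with bounded edges, and by right strings having their maximal number of alignments against a smooth neighboring floor). Combining this with the decomposition above yields
\begin{equation*}
U_\delta(d) \;=\; N_{\delta,\mathbb{C}}^{\mathbb{P}^3}(d) - S_\delta(d) \;\ge\; N_{\delta,\mathbb{C}}^{\mathbb{P}^3}(d) - A_\delta(d).
\end{equation*}

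Finally I would plug in the explicit polynomials. For $\delta = 2$, Equation~\eqref{maineq} gives $N_{2,\mathbb{C}}^{\mathbb{P}^3}(d) = 8d^6 - 24 d^5 + \mathcal{O}(d^4)$, while Theorem~\ref{thm:asymptotic} gives $A_2(d) = 8 d^6 - \tfrac{168}{5} d^5 + \mathcal{O}(d^4)$; subtracting, the $d^6$ terms cancel and $\bigl(-24 + \tfrac{168}{5}\bigr) d^5 = \tfrac{48}{5} d^5$, so $U_2(d) \ge \tfrac{48}{5}d^5 + \mathcal{O}(d^4)$. For $\delta = 3$, Equation~\eqref{maineq} gives $N_{3,\mathbb{C}}^{\mathbb{P}^3}(d) = \tfrac{32}{3}d^9 - 32 d^8 + \mathcal{O}(d^7)$ and Theorem~\ref{thm:asymptotic} gives $A_3(d) = \tfrac{32}{3}d^9 - \tfrac{1341}{35} d^8 + \mathcal{O}(d^7)$; the $d^9$ terms cancel and $\bigl(-32 + \tfrac{1341}{35}\bigr) d^8 = \tfrac{221}{35} d^8$, so $U_3(d) \ge \tfrac{221}{35}d^8 + \mathcal{O}(d^7)$. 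This is exactly the claim (with the sharper error terms $\mathcal{O}(d^4)$ and $\mathcal{O}(d^7)$, which in particular are $\mathcal{O}(d)$-type lower-order corrections relative to the stated leading behavior).

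There is essentially no technical obstacle in the corollary itself once Theorem~\ref{thm:asymptotic} is in hand; the only genuinely load-bearing point is the decomposition $N_{\delta,\mathbb{C}}^{\mathbb{P}^3}(d) = S_\delta(d) + U_\delta(d)$, which rests on the (explicitly flagged) assumption that all such multinodal surfaces are floor decomposed, together with the fact that $S_\delta(d)$ as defined in \cite{BG20} accounts for \emph{all} floor-decomposed surfaces whose nodes are separated. I would make sure this bookkeeping is stated cleanly, since it is the place where the argument is conditional.
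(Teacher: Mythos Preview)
Your proposal is correct and matches the paper's approach exactly: the paper states this as an ``immediate corollary'' of Theorem~\ref{thm:asymptotic} (and in fact presents the subtracted form as the version of Theorem~\ref{thm:asymptotic} in the introduction), obtained precisely by subtracting $A_\delta(d)$ from $N_{\delta,\mathbb{C}}^{\mathbb{P}^3}(d)$ and reading off the $d^{3\delta-1}$ coefficient. Your explicit flagging of the decomposition $N_{\delta,\mathbb{C}}^{\mathbb{P}^3}(d) = S_\delta(d) + U_\delta(d)$ as conditional on the floor-decomposed assumption is appropriate and consistent with how the paper handles it.
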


\begin{remark}
For $\delta = 4$, our method of proof is inconclusive. One can see this as follows. We sum over all choices of $i_1, i_2, i_3, i_4 > 1$ to obtain the count. We ignore the possibility of  a node in the first floor because this will not contribute to the count. Additionally, we allow at most $i_1$ to possibly equal $d$, because (as can be seen for $\delta= 2, 3$) having more than one $i_j = d$ does not contribute to the top two degrees asymptotically. We have

\begin{align*}
   A_4(d)&= \sum_{i_1 = 2}^{d-1} \sum_{i_2 = 2}^{i_1} \sum_{i_3 = 2}^{i_2} \sum_{i_4 = 2}^{i_3} \prod_{j=1}^4 (12 i_j^2 - 18 i_j)
    + (3 d^2 - 7 d) \sum_{i_2 = 2}^{d-1}\sum_{i_3 = 2}^{i_2} \sum_{i_4 = 2}^{i_3} \prod_{j=2}^4 (12 i_2^2 - 18 i_2)\\
    &= \frac{32}{3} d^{12} - \frac{64}{5} d^{11} + \mathcal{O}(d^{10}).
\end{align*}

Now, $\tfrac{-64}{5} > -32$. So, for 4 nodes this method is inconclusive to determine whether surfaces with unseparated nodes are sufficient to produce the top two degrees of the count.
\end{remark}

\section{Binodal polytopes}\label{sec:binodalpolytopes}

In order to tropically count surfaces of a given degree with at least two nodes we have to count unseparated nodes. To do this, we must first understand the smallest possible Newton polytopes of binodal surfaces. We say a three-dimensional polytope $P$ is \emph{binodal} if there exists a surface with two distinct nodes in the torus that has Newton polytope $P$. It has support $P\cap\mathbb{Z}^3$ if all coefficients of the polynomial defining the surface are non-zero. 
In \cite{BG20}, we give the following necessary condition for a polytope to appear in a subdivision of a binodal tropical surface, which can often be verified computationally.

	\begin{lemma}\cite[Lemma 3.3.]{BG20}\label{lem:computations}
	Let $\Gamma\subset \mathbb{Z}^3$ be finite, and let $B_{\Gamma}$ be the variety of binodal hypersurfaces
whose defining polynomial has support $\Gamma$. If the projective dimension of  $B_{\Gamma}$ is less than $|\Gamma|-3$, then any tropical surface whose dual subdivision consists of unimodular tetrahedra away from $Conv(\Gamma)$ is not the tropicalization of a complex binodal cubic surface.
If $B_\Gamma$ is empty, then $Conv(\Gamma)$ is not binodal.
	\end{lemma}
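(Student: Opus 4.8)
The plan is to reduce the first assertion to an excess-intersection count carried out on an initial degeneration, and to obtain the second one directly from the definitions. Write $\sigma := \mathrm{Conv}(\Gamma)$, and suppose for contradiction that some tropical surface $T$ of degree $d$ whose dual subdivision $\mathcal{S}$ of $d\Delta_3$ has $\sigma$ as a cell and only unimodular tetrahedra as its other maximal cells equals $\trop(X)$ for a binodal surface $X = V(f)$ over $\mathbb{K}$. Let $v_\sigma$ be the vertex of $T$ dual to $\sigma$.

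\emph{Step 1: both nodes of $X$ lie over $v_\sigma$.} A maximal cell of $\mathcal{S}$ other than $\sigma$, and each of its faces, is a unimodular simplex, so the associated local initial form of $f$ cuts out a hyperplane (times a subtorus) in the ambient torus, which is smooth; by the smoothness criterion for tropical hypersurfaces, a point $x \in X$ whose tropicalization lies on a cell of $T$ dual to such a simplex is a smooth point of $X$. The only remaining faces of $\sigma$ are the ones lying on $\partial(d\Delta_3)$, over which a generic surface through the prescribed points is again smooth. Hence the two nodes of $X$ tropicalize to $v_\sigma$. Passing to the initial form $\mathrm{in}_{v_\sigma}(f)$ -- a Laurent polynomial over the residue field $\mathbb{C}$ with Newton polytope $\sigma$ and support contained in $\Gamma$ -- the nodes of $X$ degenerate to singular points of $X_0 := V(\mathrm{in}_{v_\sigma}(f)) \subseteq (\mathbb{C}^*)^3$ that stay in the torus, because singularities cannot disappear along the flat degeneration of $X$ to $X_0$ and $X$ has no other singularities. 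Therefore $[\mathrm{in}_{v_\sigma}(f)]$ is a point of the closure of $B_\Gamma$ in $\mathbb{P}^{|\Gamma|-1}$.

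\emph{Step 2: the dimension count.} Regard a degree-$d$ surface over $\mathbb{K}$ as its coefficient vector in $\mathbb{P}^{\binom{d+3}{3}-1}$. The $\binom{d+3}{3}-3$ point conditions are linear, so $X$ lies on a $2$-dimensional linear subspace $L$; by the analysis of point conditions in Mikhalkin position in \cite{MaMaSh18}, the leading-coefficient vectors of the surfaces of combinatorial type $\mathcal{S}$ lying on $L$ sweep out a $2$-plane $\Pi$ which is in general position. Forgetting the coordinates outside $\Gamma$ is a linear projection, under which a general $2$-plane maps onto a general $2$-plane $\overline{\Pi} \subseteq \mathbb{P}^{|\Gamma|-1}$. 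By Step 1, $[\mathrm{in}_{v_\sigma}(f)] \in \overline{\Pi} \cap \overline{B_\Gamma} \neq \emptyset$; but a general $2$-plane of $\mathbb{P}^{|\Gamma|-1}$ meets the fixed subvariety $\overline{B_\Gamma}$ only if $\dim B_\Gamma = \dim \overline{B_\Gamma} \geq (|\Gamma|-1) - 2 = |\Gamma| - 3$, which contradicts the hypothesis. For the second assertion, if $B_\Gamma = \emptyset$ then no Laurent polynomial supported on $\Gamma$ defines a binodal hypersurface in the torus, hence no surface with Newton polytope $\sigma$ is binodal, that is, $\mathrm{Conv}(\Gamma)$ is not binodal.

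The main obstacle is the genericity claim in Step 2: the individual linear forms produced by the point conditions are very sparse, so one cannot simply invoke naive general position, and one has to extract from the lattice-path and smooth-extension description of surfaces through points in Mikhalkin position (\cite{MaMaSh18}) that the resulting $2$-plane $\Pi$, and hence $\overline{\Pi}$, sits in sufficiently general position relative to the fixed variety $\overline{B_\Gamma}$ for the excess-intersection bound to apply. A secondary delicate point is the localization in Step 1: ruling out a node of $X$ over a face of $\sigma$ contained in $\partial(d\Delta_3)$, and verifying that the two nodes of $X$ really persist as (possibly coincident, possibly more degenerate) singularities of $X_0$ inside the torus even when $\sigma$ has large normalized volume; here one argues via upper semicontinuity of the number of singular points of the fibres along the degeneration of $X$ to $X_0$.
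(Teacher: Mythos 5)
Your overall strategy --- localize both nodes to the part of the surface dual to $\mathrm{Conv}(\Gamma)$, reduce binodality to the condition that a point of $\mathbb{P}^{|\Gamma|-1}$ determined by the $\Gamma$-coefficients lies in $B_\Gamma$, and conclude because the point conditions confine that point to a generic $2$-plane, which misses $B_\Gamma$ once $\dim B_\Gamma < |\Gamma|-3$ --- is the intended one, and your treatment of the second assertion is fine. But the argument has a genuine gap beyond the ones you flag. The key claim of Step 1, that $[\mathrm{in}_{v_\sigma}(f)]$ lies in $\overline{B_\Gamma}$, does not follow from persistence of singularities alone. Semicontinuity gives you that the special fibre $X_0$ carries singularities of total multiplicity at least two in the torus; if the two nodes reduce to \emph{distinct} points of $X_0$ you do land in $B_\Gamma$ (the triple consisting of $\mathrm{in}_{v_\sigma}(f)$ and the two reduction points is an honest point of the incidence variety over $\Gamma$), but if they collide, $X_0$ has a single worse singularity, and for a sparse support $\Gamma$ (six monomials in the applications) there is no reason such a surface must be a limit of genuinely binodal surfaces supported on $\Gamma$. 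What your degeneration argument gives unconditionally is membership in the larger locus of $\Gamma$-supported surfaces whose singular subscheme in the torus has length at least $2$; the hypothesis bounds $\dim B_\Gamma$, not the dimension of that locus. You must either rule out the collision of the reductions or enlarge the variety to which the dimension hypothesis is applied.

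Two further points. The genericity of $\overline{\Pi}$ is the technical heart of the statement, not a side remark: the plane you need is not the image of $L$ under a coordinate projection but the plane of \emph{leading coefficients} of the $\Gamma$-coordinates of surfaces in $L$ with the prescribed combinatorial type, and its genericity must be extracted from the triangular structure of the lattice-path/smooth-extension algorithm of \cite{MaMaSh18} together with the genericity of the leading coefficients of the $p_i$; naming this as ``the main obstacle'' is accurate but leaves the proof incomplete. Finally, the lemma permits $\mathrm{Conv}(\Gamma)$ to be further subdivided, in which case the two nodes may tropicalize to different cells inside the region dual to $\mathrm{Conv}(\Gamma)$ and there is no single vertex $v_\sigma$ or single initial form to work with; the reduction has to be phrased in terms of the coefficients indexed by $\Gamma$ collectively rather than via $\mathrm{in}_{v_\sigma}(f)$.
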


 Satisfying Lemma \ref{lem:computations} is not sufficient to ensure that the polytope is binodal. For example, it is possible that $B_\Gamma$ actually consists only of surfaces with non-isolated singularities, which can be difficult to check in general.
 
 In this section we will analyze polytopes $P$ with small numbers of lattice points $|P \cap \mathbb{Z}^3|$ and investigate whether or not they are binodal. We use the classifications of three-dimensional polytopes with small numbers of lattice points given by Blanco and Santos \cite{bs14, BS15, BS18}. These classifications give the finitely many 3-polytopes with $n$ lattice points for $5 \leq n \leq 11$ and width greater than 1. For polytopes with width equal to 1, they give a classification of the finitely many infinite families for polytopes with 5 and 6 lattice points. We will only consider polytopes with 6 or more lattice points for the following reason.
 
 \begin{lemma}
 \label{lem:6latticepoints}
     A binodal 3-polytope contains at least six lattice points.
 \end{lemma}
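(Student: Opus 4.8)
The plan is to show that a surface with two distinct nodes in the torus needs at least six monomials (equivalently, its Newton polytope contains at least six lattice points), by a dimension-count combined with a classification of the small cases. First I would observe that demanding a single node is one condition on the coefficient vector in general position, so naively a surface with two nodes lives in a family of codimension two inside the space of hypersurfaces with a given support $\Gamma$; hence, via Lemma~\ref{lem:computations}, we need $|\Gamma| - 1 \geq 2$, i.e. $|\Gamma|\geq 3$, which is far too weak. So the real content is ruling out $|\Gamma| \in \{3,4,5\}$ directly. For $|\Gamma|\le 3$ the polytope $\mathrm{Conv}(\Gamma)$ is at most a triangle, so the surface is a union of (at most two) monomials times a lower-complexity factor or a binomial/trinomial; such a hypersurface is a union of hyperplane-like pieces (after a monomial change of coordinates a trinomial surface $ax^\alpha + bx^\beta + cx^\gamma$ with affinely independent exponents is, up to torus automorphism and scaling by a monomial, $1 + x + y$ in suitable coordinates), hence smooth in the torus or non-reduced, so not binodal.

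For $|\Gamma| = 4$ and $|\Gamma| = 5$ I would argue similarly but more carefully. When $|\Gamma|=4$ the polytope is either degenerate (planar, giving a cylinder over a plane curve or a non-reduced surface — not binodal in the torus because a single isolated node of the plane curve gives a whole line of singularities, i.e. a non-isolated singularity, while a reduced planar polynomial in three affinely independent directions plus one more is handled by the same normal-form reduction), or a (possibly non-unimodular) tetrahedron. For a tetrahedron with four lattice points the only lattice points are the vertices, so after a monomial transformation the polynomial is $c_0 + c_1 x + c_2 y + c_3 z^k$ for some $k\geq 1$; one checks directly that the singular locus in the torus is empty (the gradient $(c_1, c_2, k c_3 z^{k-1})$ never vanishes there). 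For $|\Gamma| = 5$ I would invoke Blanco--Santos \cite{bs14}: a 3-polytope with five lattice points is either of width $1$ (one of the finitely many infinite families with five lattice points, each of which has a very restricted support that one checks has no isolated torus singularities, or gives non-isolated ones) or is one of the finitely many width $>1$ polytopes with five lattice points, for which one runs the computational test of Lemma~\ref{lem:computations} (using \texttt{Singular}/\texttt{OSCAR} as elsewhere in the paper) and finds that $B_\Gamma$ is empty or consists only of non-isolated singularities.

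Concretely, the cleanest uniform argument for the small cases is: a surface with an isolated node at a torus point $p$, after translating $p$ to $(1,1,1)$ by a torus automorphism and dividing by a monomial, has a polynomial $f$ with $f(1,1,1) = 0$, $\nabla f(1,1,1) = 0$, and nondegenerate Hessian there; imposing a second such condition at a distinct torus point $q$. Each node imposes that $f$ and its three partials vanish at a point, but the Euler-type relations among partials on a torus (from the torus action / quasi-homogeneity in each variable) cut these four conditions down. The key numerical fact to extract is that each isolated torus node is genuinely two independent conditions on the projectivized coefficient space once one accounts for the automorphisms one can use — so two nodes is genuinely codimension at least two, and one needs enough coefficients for the nondegenerate-Hessian conditions to be satisfiable at two separate points simultaneously, which fails below six monomials.

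\textbf{Main obstacle.} The hard part will be the $|\Gamma| = 5$ case: unlike $|\Gamma|\leq 4$, there is genuine variety here (several width-$1$ families plus the width-$>1$ polytopes), and for each one must either produce a clean normal form showing the torus singular locus is empty or positive-dimensional, or fall back on the computational emptiness check of Lemma~\ref{lem:computations}. I expect the proof to proceed by: (i) disposing of $|\Gamma|\le 3$ by the trinomial normal form; (ii) disposing of $|\Gamma| = 4$ by the two sub-cases (planar $\Rightarrow$ non-isolated or non-reduced; tetrahedron $\Rightarrow$ the explicit $c_0 + c_1 x + c_2 y + c_3 z^k$ gradient computation); (iii) enumerating the five-lattice-point polytopes via \cite{bs14, BS15} and checking each, the width-$1$ families by hand and the finitely many width-$>1$ ones computationally; and concluding that in every case the polytope is not binodal, so a binodal polytope must have at least six lattice points. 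The subtlety to watch is that "not binodal" must mean "no surface with two \emph{distinct isolated} nodes," so in each sub-case I must rule out both the no-singularity and the wrong-kind-of-singularity (non-isolated, or coincident) scenarios, not merely count dimensions.
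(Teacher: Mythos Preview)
Your case-by-case plan (dispose of $|\Gamma|\le 3$ by a trinomial normal form, $|\Gamma|=4$ by a tetrahedron gradient check, and $|\Gamma|=5$ by enumerating the Blanco--Santos list) could in principle be carried out, but it is vastly more work than the paper's argument, which is a three-line dimension count you come close to but never actually use.

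The paper's proof: suppose $\Gamma$ is a binodal $3$-polytope with five lattice points. The space of surfaces with support $\Gamma$ is $\mathbb{P}^4$, and the binodal locus $B_\Gamma$ has projective dimension $2$. On the other hand, take any one binodal surface $S$ with Newton polytope $\Gamma$ and let the $3$-torus act on it by translation; since $\Gamma$ is $3$-dimensional the orbit is $3$-dimensional, and every translate is still binodal, so $B_\Gamma$ contains a $3$-dimensional family. This contradicts $\dim B_\Gamma = 2$. (The same argument, or the trivial observation that a $3$-polytope already has at least four lattice points, handles $|\Gamma|\le 4$.)

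You brush against this idea twice---once when you translate a node to $(1,1,1)$, and again when you say each node is ``genuinely two independent conditions\ldots once one accounts for the automorphisms one can use''---but you never convert the torus action into the bound $\dim B_\Gamma \ge 3$. That single observation replaces your entire Blanco--Santos enumeration for the $|\Gamma|=5$ case, which you correctly identify as the hard part of your plan.

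A small separate error: your $|\Gamma|=4$ normal form is wrong. Not every empty lattice tetrahedron is $GL_3(\mathbb{Z})$-equivalent to one with polynomial $c_0 + c_1 x + c_2 y + c_3 z^k$; the Reeve tetrahedra with a vertex at $(1,1,k)$ are the standard counterexamples. This is repairable, but it shows why the case-analysis route is more delicate than it looks, and why the torus-orbit argument is preferable.
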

\begin{proof}
Suppose for contradiction that there were a binodal 3-polytope $\Gamma$ with 5 lattice points. Then the projective dimension of $B_\Gamma$ is 2.
On the other hand, there exists a binodal surface with Newton polytope $\Gamma$. By translating this surface, we obtain a 3-dimensional family of binodal surfaces inside $B_\Gamma$, so the dimension of $B_\Gamma$ is at least 3, giving a contradiction.
\end{proof}

\begin{remark}
When a polytope has width greater than 1, it will not appear in the subdivision of a floor decomposed surface without being further subdivided. However, we do not know that tropical surfaces through points in Mikhalkin position with more than one node are necessarily floor decomposed.

As we will see below, binodal polytopes with 6 lattice points can only appear in the dual subdivision of a tropical surface if the binodal polytope has the trivial subdivision. Therefore, only the ones with width 1 could appear in a floor decomposed surface.
\end{remark}

\begin{proposition}
\label{prop:fan}
Let $\Gamma$ be a binodal polytope with 6 lattice points, and let $S$ be a binodal surface over $\mathbb{C}\{\{t\}\}$ with Newton polytope $\Gamma$. Then $trop(S)$ is a fan and both nodes tropicalize to the vertex of the fan.
\end{proposition}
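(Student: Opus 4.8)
The plan is to exploit the extremal nature of a $6$-point Newton polytope for a binodal surface, which Lemma~\ref{lem:6latticepoints} identifies as the minimal case, and to show that no subdivision of $\Gamma$ other than the trivial one can carry two nodes. First I would count parameters: a surface $S$ with support $\Gamma$, $|\Gamma\cap\mathbb Z^3| = 6$, lives in a $\mathbb P^5$ of coefficients, the binodal locus $B_\Gamma$ has projective dimension at least $3$ (by the translation argument already used in Lemma~\ref{lem:6latticepoints}, since the torus acts freely on such surfaces up to the finitely many that are invariant), and since $\dim \mathbb P^5 = 5$ the binodal condition cuts down the dimension by at most $2$. But imposing two independent nodes on a hypersurface is expected to be codimension $2$, and here it is \emph{exactly} codimension $2$; so $B_\Gamma$ is as small as it can possibly be, a $3$-dimensional family, which up to the torus action is a single point. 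This rigidity is the engine of the proof.

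Next I would translate this into tropical language. If $\trop(S)$ had a vertex $v$ that were \emph{not} the origin-type cone point, i.e. if the dual subdivision of $\Gamma$ were nontrivial, then $\trop(S)$ would contain a bounded edge or a proper subdivision, and correspondingly the Puiseux-series coefficients of $S$ would have to satisfy the genericity needed for that subdivision to be induced by a lift. The key point is that a nontrivial regular subdivision of $\Gamma$ forces the tropicalization to have more than one vertex, which in turn means the initial degenerations of $S$ at the various maximal cells are surfaces with \emph{smaller} support, hence (by Lemma~\ref{lem:6latticepoints} applied to the pieces, which now have fewer than $6$ lattice points) cannot themselves be binodal. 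So the two nodes of $S$ would have to be distributed among the cells: either both specialize into the same initial piece — impossible, too few lattice points — or one into each of two distinct pieces. I would rule out the latter by a dimension count on $B_\Gamma$: requiring the subdivision to be nontrivial is a closed condition carving $B_\Gamma$ into strata, and on the stratum where the nodes split between two pieces one gets \emph{strictly} fewer than $3$ moduli after modding out by the torus, contradicting that $B_\Gamma$ has pure dimension $3$ and that the torus orbit through any binodal $S$ is already $3$-dimensional. Hence the subdivision is trivial, $\trop(S)$ is a fan with a single vertex, and by the structure of the dual subdivision (one maximal cell, namely all of $\Gamma$) both nodes must tropicalize to that unique vertex.

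More concretely, for the final assertion I would argue: a smooth point of $\trop(S)$, or any point in the relative interior of a cell dual to a lower-dimensional face of the subdivision, has a local initial degeneration which is either smooth (a unimodular piece) or supported on a proper sub-polytope with $\le 5$ lattice points; in neither case can a node of $S$ tropicalize there, since the tropicalization of the singular locus of $S$ is contained in the singular locus of $\trop(S)$ together with the tropicalizations of lifts, and a node must specialize into a singular initial form. Since the subdivision is trivial the only cell is $\Gamma$ itself, dual to the single vertex $v$ of the fan, so the entire singular locus of $S$ — in particular both nodes — tropicalizes to $v$.

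The main obstacle I anticipate is making the dimension count on the strata of $B_\Gamma$ rigorous: one must be sure that imposing a nontrivial subdivision together with a node in each of two pieces really does drop the dimension below $3$ modulo the torus, rather than merely not increasing it. This requires knowing that a surface supported on a polytope with $\le 5$ lattice points has at most finitely many nodes up to the torus action and, more delicately, that the two sub-families patch together along the subdivision with no extra moduli. I would handle this by invoking Lemma~\ref{lem:computations} and Lemma~\ref{lem:6latticepoints} piecewise, and by noting that the gluing data along the common faces of the subdivision contributes no free parameters beyond a global scaling already absorbed into the torus. A secondary subtlety is the possibility that $B_\Gamma$ is reducible with a component consisting of non-isolated singularities, flagged in the remark after Lemma~\ref{lem:computations}; but the hypothesis that $S$ has two \emph{distinct nodes} places us on the expected component, where the above count applies.
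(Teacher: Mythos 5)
Your opening move --- that $B_\Gamma$ is $3$-dimensional and hence, being swept out by the $3$-dimensional torus orbits of its points, is a finite union of orbit closures of surfaces defined over $\mathbb{C}$ --- is the right engine, and it is essentially what the paper establishes. But once you have it you are done, with no need for the tropical detour: every binodal $S$ over $\mathbb{C}\{\{t\}\}$ with full support then equals $\lambda\cdot S_0$ for some $S_0$ defined over $\mathbb{C}$ and some $\lambda\in(\mathbb{K}^*)^3$, so $\trop(S)$ is the translate by $\val(\lambda)$ of the fan $\trop(S_0)$, and the nodes $\lambda\cdot p_i$ with $p_i\in(\mathbb{C}^*)^3$ tropicalize to the vertex $\val(\lambda)$. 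The paper takes exactly this direct route, pinning down the rigidity by intersecting $B_\Gamma$ with the $\mathbb{P}^1$ of surfaces having a node at $(1,1,1)$ inside the $4$-dimensional discriminant $D_\Gamma$; your justification of the rigidity (``expected codimension $2$, and here it is exactly $2$'') is weaker, since nothing you write excludes $\dim B_\Gamma=4$.

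The argument you actually run --- ruling out nontrivial dual subdivisions via initial degenerations --- has two genuine gaps. First, two nodes tropicalizing into the same maximal cell need not give a binodal initial degeneration: their reductions can collide, producing a single worse singularity ($A_{\ge 2}$, a tacnode, or worse) on a support with at most $5$ lattice points, and Lemma \ref{lem:6latticepoints} only excludes two \emph{distinct} nodes. Second, and more seriously, the stratum ``nontrivial subdivision with one node in each piece'' is cut out by conditions on the \emph{valuations} of the coefficients, not by algebraic conditions over $\mathbb{C}$, so ``strictly fewer than $3$ moduli modulo the torus'' is not a dimension count on subvarieties of $B_\Gamma$. Made tropical, the relevant question is whether $\Trop(B_\Gamma)$ --- pure of dimension $3$ and invariant under the $3$-dimensional lineality space $\Lambda$ of the secondary fan --- can meet the relative interior of the secondary cone of a two-cell subdivision; that cone is $4$-dimensional with lineality space $\Lambda$, so it comfortably contains translates of $\Lambda$, and the naive count does not close. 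What actually excludes those translates is the statement that $B_\Gamma$ is a finite union of torus orbits of $\mathbb{C}$-points (whence $\Trop(B_\Gamma)=\Lambda$), which returns you to the direct argument above.
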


\begin{proof}
Let $B_\Gamma$ be the variety of binodal surfaces with Newton polytope $\Gamma$. It has projective dimension $3$ inside $\mathbb{P}^5$. 

Without loss of generality, we may translate the surface $S$ so that one node of $S$ is located at $(1,1,1)$. Let $L$ be the variety of surfaces with Newton polytope $\Gamma$ and one node at $(1,1,1)$. 
Let the lattice points of $\Gamma$ be described by a $3 \times 6$ matrix $A = (a_{ij})$, and let $c_1, \ldots, c_6$ be the coordinates on $\mathbb{P}^5$. Let $f(x,y,z) = c_1x^{a_{1,1}}y^{a_{2,1}}z^{a_{3,1}}+ \cdots + c_6x^{a_{1,6}}y^{a_{2,6}}z^{a_{3,6}}$ be a generic polynomial defining a surface with Newton polytope $\Gamma$.
Then $L$ is the variety cut out by the linear equations $f(1,1,1)=0$, $df/dx(1,1,1) = 0,$ $df/dy(1,1,1) = 0,$ and $df/dz(1,1,1) = 0$. 
Explicitly, $L$ is the linear space defined by the kernel of the $4 \times 6$ matrix 
$$
\begin{bmatrix}
1 & 1 & 1& 1& 1& 1 \\
a_{1,1} & a_{1,2} & a_{1,3} & a_{1,4} & a_{1,5} & a_{1,6}\\
a_{2,1} & a_{2,2} & a_{2,3} & a_{2,4} & a_{2,5} & a_{2,6}\\
a_{3,1} & a_{3,2} & a_{3,3} & a_{3,4} & a_{3,5} & a_{3,6}\\
\end{bmatrix}.
$$
 Since $\Gamma$ is 3-dimensional, this matrix has rank 4, and so the kernel has dimension 2, and thus gives a linear space of projective dimension 1. 
 
 Let $D_\Gamma$ be the discriminant of $\Gamma$, i.e., the variety of those surfaces with at least one node. Then $D_\Gamma$ has codimension 1 in $\mathbb{P}^5$, and $B_\Gamma \subset D_\Gamma$. Further, the linear space $L \subset D_\Gamma$. Since $B_\Gamma$ has codimension 1 in $D_\Gamma$ and $L$ has dimension 1 in $D_\Gamma$, the intersection $B_\Gamma \cap L \subset D_{\Gamma}$ has dimension 0 and consists of finitely many points ($L$ is not contained in $B_\Gamma$ because there may be surfaces with exactly one node, located at $(1,1,1)$, which are described by points in $L$ and not in $B_\Gamma$).

Since $B_\Gamma$ and $L$ are defined by equations over $\mathbb{C}$, and 
$\mathbb{C} \subset \mathbb{C}\{\{t\}\}$ is algebraically closed, points in $B_\Gamma \cap L$  are defined over $\mathbb{C}$. 
Each point in $B_\Gamma \cap L$ is a complex surface whose nodes are also defined over $\mathbb{C}$. Thus, the tropicalization of surfaces in $B_\Gamma \cap L$ are fans with vertex at $(0,0,0)$ and both nodes tropicalizing to $(0,0,0)$. 
\end{proof}

\begin{remark}
This proposition and its proof hold more generally for $\delta$-nodal polytopes of dimension $d$ with $\delta + d + 1$ lattice points. 
\end{remark}

\begin{remark}
In this paper we do not discuss polytopes with 7 or more lattice points. Binodal polytopes with 7 lattice points can be properly subdivided inside the subdivision. There are 496 three-dimensional polytopes with 7 lattice points and width greater than 1 \cite{BS18}. The infinite families of polytopes with 7 vertices and width 1 are not classified.
\end{remark}

\subsection{Polytopes with 6 lattice points of width 1}
\label{subsec:6points}
We consider polytopes up to integral unimodular affine transformations, that means affine translations and multiplication, of the lattice points with an element of $GL_3(\mathbb{Z})$. We call two polytopes that can be transferred into each other by these operations \emph{IUA-equivalent}.

We use the classification of polytopes with $6$ lattice points of width $1$ from \cite{BS15} to search for binodal polytopes. 
We have translated each polytope to ensure that all coordinate entries are positive.
For consistency with \cite{BG20} we permute the coordinates to $(x\,z\,y)$ and order the points by how they appear in the lattice path.
Additionally, we delete those polytopes for which a permuted version appears twice in the Tables 5, 6, and 7 from  \cite{BS15}; see \cite[Remark 4.3]{BS15}. We enumerate the remaining polytope families from 1 to 21. Then, we remove any polytopes that can be eliminated using Lemma~\ref{lem:computations}. We list the remaining polytopes in  Figure \ref{tab:allpolytopes}. In \cite{BS15} polytope family 21 is  given with the restriction $ad-bc=\pm 1$. By applying the $GL_3(\mathbb{Z})$ operation that switches the entries in the $y$ and $z$ coordinates, we see that the polytope family satisfies a symmetry condition, and without loss of generality, we can assume that $ad-bc=1$.

\begin{figure}
    \centering
\begin{minipage}{0.3 \linewidth}
  \begin{center}
  No. 1 
  
  $
   \begin{pmatrix}
				0&0&0&0&0&1 \\
				0&1&1&1&2&1\\
				0&0&1&2&0&0
			\end{pmatrix}$\\
			
			\hfill
			
			\hfill
			
			\hfill 
			
  \end{center}
\end{minipage}
\begin{minipage}{0.3 \linewidth}
  \begin{center}
  No. 8
  
  $
   \begin{pmatrix}
	0&0&0&0&1&1\\
	0&1&1&2&0&a\\
	0&1&2&1&0&b 
		\end{pmatrix}$\\
			
			$gcd(a,b) = 1,$  $2b \not=a,\ 0<b<a$
  \end{center}
\end{minipage}
\begin{minipage}{0.3 \linewidth}
  \begin{center}
  No. 9
  
  $
  \begin{pmatrix}
			0&0&0&0&1&1\\
			0&0&1&1&0&0\\
			0&1&0&1&0&1
		\end{pmatrix}$\\
		
		\hfill
		
		\hfill
		
		\hfill
		
  \end{center}
\end{minipage}
\vspace{0.2 in}

\begin{minipage}{0.25 \linewidth}
  \begin{center}
  No. 10
  
  $
  \begin{pmatrix}
			0&0&0&0&1&1\\
			0&0&1&1&0&a\\
			0&1&0&1&0&b\\
		\end{pmatrix}$\\
		
		$gcd(a,b)=1$, $0< b\leq a$ 
		
  \end{center}
\end{minipage}
\begin{minipage}{0.3 \linewidth}
  \begin{center}
  No. 13
  
  $
\begin{pmatrix}
	0&0&0&0&1&1\\
	0&1&1&2&0&a\\
	0&0&1&0&0&b
\end{pmatrix}$\\
		
		$gcd(a,b)=1$, $0< b < a$

  \end{center}
\end{minipage}
\begin{minipage}{0.3 \linewidth}
  \begin{center}
  No. 14
  
  $
\begin{pmatrix}
	0&0&0&0&1&1\\
	0&1&1&2&0&b\\
	0&0&1&0&0&a
\end{pmatrix}$\\
		
		$gcd(a,b)=1$, $0< b < a$ 
		
  \end{center}
\end{minipage}
\vspace{0.3 in}

\begin{minipage}{0.25 \linewidth}
  \begin{center}
  No. 15
  
  $
\begin{pmatrix}
	0&0&0&0&1&1\\
	0&0&0&0&0&a\\
	0&1&2&3&0&b
\end{pmatrix}$\\
		
		$gcd(a,b)=1$, $0\leq b < a$ 
		
  \end{center}
\end{minipage}
\begin{minipage}{0.45 \linewidth}
  \begin{center}
  No. 16
  
  $
\begin{pmatrix}
0&	0&0&1&1&1\\
0&	1& 2 &0& b & 2b &\\
0&	0&0&0&a&2a& 
	\end{pmatrix}$\\
		
		$gcd(a,b)=1$, $0\leq b < a$
		
		\hfill
		
  \end{center}
\end{minipage}
\vspace{0.2 in}

\begin{minipage}{0.33 \linewidth}
  \begin{center}
  No. 17
  
  $
\begin{pmatrix}
		0&0&0&1&1&1\\
		0&0&1&0&a&2a\\
		0& 1&0 &0& b & 2b\\
	\end{pmatrix}$\\
		
		$gcd(a,b)=1$, $0< b \leq  a$
		
		\hfill
		
  \end{center}
\end{minipage}
\begin{minipage}{0.25 \linewidth}
  \begin{center}
  No. 20
  
  $
\begin{pmatrix}
	0&0&0&1&1&1\\
	1&1&2&0&1&1\\
	0&1&0&a&0&1
\end{pmatrix}$\\
		
		$a \geq 3$
		
		\hfill
		
  \end{center}
\end{minipage}
\begin{minipage}{0.3 \linewidth}
  \begin{center}
  No. 21
  
  $
\begin{pmatrix}
	0&0&0&1&1&1\\
	0&0&1&0&a&c\\
	0&1&0&0&b&d
	\end{pmatrix}$\\
		
		 $ad-bc =  1$, $a,b,c,d >0$, $c+d>a+b$, $c>a$
		
  \end{center}
\end{minipage}
\caption{The polytopes with 6 lattice points of width 1 from \cite{BS15} remaining after applying Lemma \ref{lem:computations}.}
\label{tab:allpolytopes}
\end{figure}

\begin{proposition}
Of the polytope families listed in Table \ref{tab:allpolytopes}, polytopes from families 1,\ 9,\ 15,\ 16, and \ 17 are not binodal.
\label{prop:eliminatepolytopes}
\end{proposition}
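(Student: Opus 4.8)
The plan is to show, for each of the five families, that the variety $B_\Gamma$ of binodal surfaces supported on $\Gamma$ either is empty or has projective dimension strictly less than $3$, which by Lemma~\ref{lem:computations} (combined with the general dimension count in the proof of Lemma~\ref{lem:6latticepoints}: a genuinely binodal polytope must have $\dim B_\Gamma \geq 3$) forces the polytope to be non-binodal. Concretely, for a fixed representative of each family I would write down the generic polynomial $f = \sum_{i=1}^6 c_i x^{a_{1,i}} y^{a_{2,i}} z^{a_{3,i}}$ and set up the binodal conditions. Rather than carry this out symbolically by hand, the natural approach — consistent with the computational methods already used throughout the paper via \texttt{Singular} \cite{singular} and \texttt{OSCAR} \cite{oscar, Code21} — is to compute the dimension of $B_\Gamma$ directly: $B_\Gamma$ is the image of the incidence variety $\{(f, p, q) : p \neq q,\ f(p) = f(q) = 0,\ \nabla f(p) = \nabla f(q) = 0\}$ inside $\mathbb{P}^5$, and one can compute its dimension by elimination.

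The key steps, in order, are: (1) For each family $\Gamma \in \{1, 9, 15, 16, 17\}$, fix a concrete matrix of exponents (the families $9$ and $1$ have no free parameters; for $15, 16, 17$ one must argue the dimension count is independent of the admissible parameters $a, b$, which follows because the lattice-point configuration — and hence the linear algebra governing the node conditions — is combinatorially the same for all admissible $(a,b)$, or else one checks a generic such pair and invokes semicontinuity). (2) Impose a single node at a generic torus point, say $(1,1,1)$ after a translation; as in the proof of Proposition~\ref{prop:fan}, this cuts out a linear subspace $L \subset \mathbb{P}^5$ of dimension $1$. (3) Since $B_\Gamma$ has codimension at most $1$ in the discriminant $D_\Gamma$ and $D_\Gamma$ is a hypersurface in $\mathbb{P}^5$, a binodal polytope would force $\dim B_\Gamma = 3$ and hence $B_\Gamma \cap L$ nonempty of dimension $0$; so it suffices to exhibit, for each of these five families, that imposing a second node elsewhere in the torus on the one-dimensional family $L$ has no solution, i.e. $B_\Gamma \cap L = \emptyset$. (4) Execute this check computationally: parametrize $L$ by a single parameter $s$, substitute into the discriminant (or directly into the system "$\exists\, (x,y,z) \neq (1,1,1)$ in the torus with $f = f_x = f_y = f_z = 0$"), and verify the resulting ideal is the unit ideal or has empty vanishing locus away from the diagonal; equivalently, compute $\dim B_\Gamma$ and check it is $\leq 2$.

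The step I expect to be the main obstacle is (4), specifically disentangling genuine second nodes from spurious solutions: the naive elimination will also pick up the locus where the "second node" coincides with the first (the diagonal $p = q$), where it escapes the torus (a boundary singularity, which should be excluded since we work in the torus), or where the surface has a non-isolated singularity rather than two distinct nodes. One must saturate by the diagonal and by the coordinate hyperplanes and then still argue that what remains, if nonempty, does not consist entirely of surfaces with non-isolated singularities — precisely the subtlety flagged in the remark after Lemma~\ref{lem:computations}. In practice, for these small polytopes the cleanest route is to directly compute $\dim B_\Gamma$ using the incidence variety with the diagonal and torus-boundary loci removed, and confirm $\dim B_\Gamma < 3$; the output of these computations is what is recorded in the code accompanying the paper \cite{Code21, G22}. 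A secondary, more minor obstacle is the parameter-dependence in families $15$, $16$, $17$, handled by the combinatorial-invariance observation in step (1) together with upper semicontinuity of fiber dimension.
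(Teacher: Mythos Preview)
Your plan has a genuine gap that makes it fail for exactly the families where the paper has to work hardest. You propose to ``compute $\dim B_\Gamma$ \ldots\ and confirm $\dim B_\Gamma < 3$'', but recall that the families in Figure~\ref{tab:allpolytopes} are precisely those that \emph{survived} the Lemma~\ref{lem:computations} filter. In particular, for family~9 the naively computed binodal locus (the image of the two-pointed incidence variety minus the diagonal) has projective dimension exactly~$3$: it is the rank-$1$ locus of a $2\times 3$ matrix in the coefficients, cut out by the three $2\times 2$ minors. Your dimension test therefore returns the ``wrong'' answer, and you have no fallback. You flag the non-isolated-singularity issue as the main obstacle, and it is, but your proposed resolution (saturate and compute dimension) does not resolve it: after saturating by the diagonal and by the coordinate hyperplanes the locus is still $3$-dimensional, because every singular surface in this family has an entire curve of singular points in the torus. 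The paper's proof is genuinely different here: for families~9 and~16 it shows by direct algebraic manipulation that any polynomial in the singular locus either factors (family~9) or can be rewritten so that the singularity conditions force a one-parameter family of solutions (family~16), hence every such surface has non-isolated singularities and is not binodal in the required sense.

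Your handling of the parameters $a,b$ in families~15,~16,~17 is also not sound. You suggest checking a generic pair and invoking semicontinuity, but semicontinuity of fibre dimension goes the wrong direction for this purpose: emptiness (or small dimension) of $B_\Gamma$ at a generic $(a,b)$ does not preclude $B_\Gamma$ from being nonempty or $3$-dimensional at special values, and you need the conclusion for \emph{all} admissible $(a,b)$. The paper instead treats $a,b$ symbolically: for family~17, for instance, one writes down $f$, $\partial_x f$, $\partial_y f$, $\partial_z f$ and eliminates to obtain unique values of $y,z$ (and then $x$) for any putative node, uniformly in $a,b$, forcing at most one singular point; for family~16 a similar uniform analysis shows a curve of singularities. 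This symbolic-in-the-exponents argument is what your proposal is missing.
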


\begin{proof}
We give an argument eliminating each polytope. Details can be found in Appendix~\ref{app:prop:eliminatepolytopes}.

\begin{enumerate}
    \item[1.] The binodal variety  is empty.
    \item[9.] Every surface in the binodal locus contains non-isolated singularities because the polynomial defining a surface with this Newton polytope always factors, see Appendix~\ref{app:prop:eliminatepolytopes} for details.
  \item[15.] The binodal variety is empty, independent of $a,b$.
     \item[16.] For any choice of $a,b$, the surfaces in the binodal locus will have non-isolated singularities. See Appendix~\ref{app:prop:eliminatepolytopes} for details.
    \item[17.] For any $a,b$, the binodal locus is empty. This can be deduced from the equations asserting that a hypersurface with this Newton polytope has a singularity; see Appendix~\ref{app:prop:eliminatepolytopes} for details.
\end{enumerate}
\end{proof}

\begin{remark}
In the list of remaining polytopes, it is possible that some still have the property that their binodal locus contains only surfaces with non-isolated singularities.
\end{remark}

\begin{proposition}
The following values for the parameters in the given families do not lead to a binodal polytope:
\begin{itemize}
    \item[]family 10: $a=1, b=1$ and $a=2,b=1$.
    \item[]family 13: $a=2,b=1$ and $a=3, b=1$.
    \item[]family 14: $a=2,b=1$ and $a=3, b=1$.
    \item[]family 20: $a=3$.
\end{itemize}
\end{proposition}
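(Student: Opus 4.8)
The plan is to treat each listed parameter value as a concrete finite computation: for fixed small integers $a,b$, the polytope $\Gamma$ has exactly six named lattice points, so a surface with that Newton polytope and support $\Gamma$ depends on six coefficients $c_1,\ldots,c_6$, i.e.\ lives in $\mathbb{P}^5$. First I would write down, for each family and each specified $(a,b)$, the generic polynomial $f(x,y,z)=\sum_{i=1}^6 c_i x^{a_{1,i}}y^{a_{2,i}}z^{a_{3,i}}$ and form the binodal ideal: the conditions that $f$ has two distinct singular points, eliminating the point coordinates to obtain equations purely in the $c_i$. Concretely one can impose a node at a generic torus point (using the translation trick of Proposition~\ref{prop:fan} one may even normalize one node to $(1,1,1)$) and a second node at an independent point, then saturate to remove the diagonal and the non-isolated locus. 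This is exactly the kind of \texttt{Singular}/\texttt{OSCAR} computation referenced throughout the paper (see \cite{Code21,G22}), so I would defer the explicit Gröbner-basis runs to the appendix and the code.

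The decisive step is a dimension count calibrated against Lemma~\ref{lem:6latticepoints} and Proposition~\ref{prop:fan}: a genuinely binodal $\Gamma$ with six lattice points must have $\dim B_\Gamma = 3$ (the three translations of a single binodal surface already force $\dim B_\Gamma \geq 3$, and the proof of Proposition~\ref{prop:fan} shows it is exactly $3$, of projective dimension $3$ in $\mathbb{P}^5$). So for each listed $(a,b)$ I would show that the binodal variety $B_\Gamma$ either is empty or has projective dimension strictly less than $3$ — equivalently, by Lemma~\ref{lem:computations}, that $\dim B_\Gamma < |\Gamma|-3 = 3$ — which immediately rules out $\Gamma$ being binodal. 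In several of these small cases (e.g.\ family 10 with $a=b=1$, which up to $GL_3(\mathbb{Z})$ is the very symmetric polytope of family 9) I expect the defining polynomial to factor or the singular locus to be forced positive-dimensional, so the argument is the same ``non-isolated singularities'' phenomenon already used for families 9 and 16 in Proposition~\ref{prop:eliminatepolytopes}; in the others (the $a$ large-ish cases like family 13, $a=3,b=1$ and family 20, $a=3$) I expect $B_\Gamma$ to be literally empty, detected by the binodal ideal containing $1$ after saturation.

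I would organize the write-up family by family. For family 10 with $(a,b)=(1,1)$, note the IUA-equivalence to family 9 and invoke the already-proved non-binodality there; for $(a,b)=(2,1)$, run the direct computation. For families 13 and 14 the two polytopes are related by swapping the last two coordinates, so the four stated cases reduce to two genuine computations, each showing emptiness of $B_\Gamma$. For family 20 with $a=3$, observe that the parameter restriction in Figure~\ref{tab:allpolytopes} is $a\geq 3$ but Figure~\ref{fig:polys} records the binodal range as $a>3$; the boundary value $a=3$ is the one to exclude, again by a direct emptiness computation. The main obstacle is not conceptual but bookkeeping: ensuring that when I saturate the singular-point ideal I have correctly removed (i) the locus where the two nodes collide and (ii) the locus where the ``node'' is actually part of a non-isolated singular curve, so that what remains really is the locus of surfaces with two \emph{distinct isolated} nodes; getting these saturations right for each of the roughly six distinct polytopes is where care is needed, and the supporting code in \cite{Code21} is what makes it reliable.
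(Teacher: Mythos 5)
Your overall strategy is exactly the paper's: the published proof is a two-sentence appeal to a \texttt{Singular}/\texttt{OSCAR} computation of the dimension of the binodal variety for each listed parameter value, concluding non-binodality because the dimension is too small (less than $3$, which a genuinely binodal $6$-point polytope must attain by the translation argument of Lemma~\ref{lem:6latticepoints}), except for families 13 and 14 at $a=3,b=1$, where the computed dimension is the expected one but the binodal locus itself turns out to be empty --- a wrinkle your ``either empty or $\dim B_\Gamma<3$'' framing and your saturation caveats correctly anticipate. One concrete flaw: family 10 with $a=b=1$ is \emph{not} IUA-equivalent to the polytope of family 9, so that shortcut fails. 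The two polytopes have different normalized volumes (family 9 is the prism $\Delta_2\times[0,1]$ of normalized volume $3$, while the convex hull of the unit square at $x=0$ with the edge from $(1,0,0)$ to $(1,1,1)$ has normalized volume $4$, as one sees by slicing: the cross-section at $x=t$ has area $1-t^2$, giving Euclidean volume $2/3$ rather than $1/2$). Since unimodular affine maps preserve volume, you cannot ``invoke the already-proved non-binodality'' of family 9 for this case and must instead run the direct computation there as well --- which is precisely what the paper does uniformly for all eight parameter values.
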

\begin{proof}
Using \texttt{OSCAR} or \texttt{Singular} we can compute the dimension of the binodal varieties. Only for $a=3, b=1$ in families 13 and 14 we obtain the expected dimension \cite{singular,oscar,Code21}. For the other families and parameter values the computed dimension is too small. However, the binodal variety for families 13 and 14 for $a=3,b=1$ are empty.
\end{proof}

\subsection{Lattice paths}
\label{subsec:paths}
We compute a list of possible lattice paths for each polytope family.
We note, that for each polytope family in  Table \ref{tab:allpolytopes} the edge-vertex combinatorics remains constant across different choices of parameters, except for polytope family 8, where there are two cases, depending on whether $2b>a$ or $2b<a$. This is significant, because the possible lattice paths only depend on the ordering of the vertices and the existence of edges between the vertices.

By Proposition \ref{prop:fan}, the dual subdivision to a binodal tropical surface whose Newton polytope is a binodal polytope with 6 lattice points
is always the trivial subdivision. It follows that if the surface passes through points in Mikhalkin position, then the corresponding lattice path goes through the edges of the polytope.

 \begin{lemma}
     The direction vector for the points in Mikhalkin position $v =(1,\eta,\eta^2)$ with $0<\eta \ll 1$ gives an  order on the lattice points of a polytope $P:$ $p_0, \ldots, p_n$. The line $L$ on which the point conditions are distributed always passes through the areas dual to $p_0$ and $p_n$.
 \end{lemma}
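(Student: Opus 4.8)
The plan is to exhibit an explicit description of the regions dual to $p_0$ and $p_n$ and show that a line in direction $v = (1,\eta,\eta^2)$ with $0 < \eta \ll 1$ enters and exits the tropical surface (equivalently, the ambient $\mathbb{R}^3$) through exactly these two regions. First I would recall that in the trivial subdivision of $P$, the tropical hypersurface $\trop(S)$ is a fan with a single vertex (which we may place at the origin by translation), and its maximal cones are dual to the edges of $P$ while the complement of $\trop(S)$ consists of the open cones $\sigma_p$ dual to the lattice points $p \in P \cap \mathbb{Z}^3$; concretely, $\sigma_p$ is the normal cone of $P$ at $p$, i.e. $\sigma_p = \{ u \in \mathbb{R}^3 : \langle u, p \rangle \geq \langle u, q \rangle \text{ for all } q \in P \cap \mathbb{Z}^3 \}$. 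Each point condition $q_i$ in Mikhalkin position lies in the interior of a $2$-cell of the surface, hence on the relative boundary of two such regions; the lattice path records, for consecutive point conditions, the edge of $P$ dual to the $2$-cell containing them.

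The key observation is the following. Since the $q_i$ are distributed along the line $L = \{\lambda v : \lambda \in \mathbb{R}\}$ (up to a bounded translation, which is irrelevant for the statement about which unbounded regions $L$ meets) with growing distances, for $\lambda \to +\infty$ the ray $\lambda v$ eventually leaves the bounded part of the picture and remains in a single open region $\sigma_p$ of the fan; this $p$ is the unique lattice point maximizing $\langle v, \cdot \rangle$ over $P \cap \mathbb{Z}^3$. By Definition~\ref{def:partialorder}, the total order on $P \cap \mathbb{Z}^3$ induced by $v$ is exactly the order by the value of $\langle v, \cdot \rangle$ (this is why $1/\eta$ is chosen larger than the coordinate sums), so the maximizer is $p_n$. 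Symmetrically, for $\lambda \to -\infty$ the ray remains in $\sigma_p$ for the lattice point minimizing $\langle v, \cdot \rangle$, which is $p_0$. Thus $L$ passes through the regions dual to $p_0$ and $p_n$, and I would note that these are the only two unbounded regions it meets its endpoints, since the point conditions and the induced lattice path realize exactly the sequence of regions $\sigma_{p_0}, \sigma_{p_1}, \ldots, \sigma_{p_n}$ crossed in order (each crossing of a $2$-cell corresponds to one step of the complete lattice path $P(P \cap \mathbb{Z}^3)$).

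The main obstacle is making precise the claim that $L$ ``passes through the areas dual to $p_0$ and $p_n$'' in a way that is compatible with the lattice-path construction: one must check that the line, after being translated so that the first point conditions sit in their prescribed $2$-cells, genuinely enters the cone $\sigma_{p_0}$ on one side and $\sigma_{p_n}$ on the other, rather than escaping through some intermediate region because of the translation or because the spacing of the $q_i$ is not yet large enough. This is handled by the standard fact (used already in \cite{MaMaSh18}) that in Mikhalkin position the distances between consecutive $q_i$ grow fast enough that the line segment between $q_i$ and $q_{i+1}$ crosses exactly one $2$-cell, and that before $q_1$ and after $q_n$ the line has no further interaction with $\trop(S)$; combined with the identification of the order $>$ with $\langle v, \cdot \rangle$, this forces the extreme regions to be $\sigma_{p_0}$ and $\sigma_{p_n}$. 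A short argument that $\langle v, \cdot \rangle$ attains its min and max on $P \cap \mathbb{Z}^3$ at vertices of $P$, hence at genuine lattice points of the polytope rather than interior ones, completes the picture, since then $p_0$ and $p_n$ are vertices and the dual regions $\sigma_{p_0}, \sigma_{p_n}$ are full-dimensional.
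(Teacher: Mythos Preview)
Your proposal is correct and reaches the same conclusion as the paper, but by a different and somewhat more self-contained route. The paper's proof invokes \cite[Lemma 3.2]{MaMaSh18}, which asserts that the line $L$ traverses the $3$-dimensional regions of $\mathbb{R}^3\setminus S$ in the order induced on their dual lattice points by the direction vector $v$; from this it is immediate that the first region encountered is dual to $p_0$ and the last to $p_n$, regardless of whether the surface is smooth. Your argument instead bypasses this citation and analyzes the asymptotic behavior of $\lambda v$ directly: since the region dual to $p$ has recession cone equal to the normal cone of $P$ at $p$, and $v$ lies in the interior of the normal cone at $p_n$ (respectively $-v$ at $p_0$) precisely because $p_n$ maximizes (respectively $p_0$ minimizes) $\langle v,\cdot\rangle$ over $P\cap\mathbb{Z}^3$, the line must eventually enter and remain in these two regions at its two ends.

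One small point: you set up the argument explicitly for the trivial subdivision (the fan case), where $\sigma_p$ is literally the normal cone. The lemma is stated for an arbitrary tropical surface with Newton polytope $P$, so the regions are translates of polyhedra rather than cones; your asymptotic argument still goes through because the recession cone of the region dual to $p$ is the normal cone at $p$, and the constant terms $c_p$ become negligible for $|\lambda|$ large. You gesture at this with the remark about bounded translations being irrelevant for unbounded regions, but it would be cleaner to say so explicitly. The paper's proof avoids this wrinkle by appealing to the cited lemma, which already handles the general subdivision.
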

\begin{proof}
By \cite[Lemma 3.2]{MaMaSh18},
  the line $L$ on which the points in Mikhalkin position are distributed has to pass through the 3-dimensional areas defined by the tropical surface via $\mathbb{R}^3\setminus S$ in the order  given by the partial order induced on their dual vertices by the direction vector of $L$.

The ordering of the lattice points induced by $v$ is given by $p_i<p_j$ if and only if $\langle p_j-p_i,v\rangle >0,$ see Definition \ref{def:partialorder}. 
The ordering of the vertices stands for the order in which the line through the point configuration passes through the cells dual to the vertices. For a smooth surface, the line (with direction vector $(1,\eta,\eta^2)$) has to pass through all $n+1$ areas into which the surface divides $\mathbb{R}^3.$ Due to the direction vector of the line on which the points are distributed, the line always has to start in the area of $\mathbb{R}^3$ dual to the first vertex $p_0$ and it always has to end in the area dual to $p_n$, even when the surface is no longer smooth.
\end{proof}

Tables \ref{tab:bigtab1} and \ref{tab:bigtab2} list the possible lattice paths for each polytope from Table \ref{tab:allpolytopes} after reducing by Proposition \ref{prop:eliminatepolytopes}. The possible lattice paths are determined in Lemmas \ref{lem:disconnected42} and \ref{lem:nodiscon20}. By Lemma \ref{lem:disconnected42}, no polytope in Table \ref{tab:bigtab1} can have disconnected lattice paths. 
Lemma \ref{lem:nodiscon20} determines the lattice paths for the polytopes in Table \ref{tab:bigtab2}.

 \begin{lemma}
 \label{lem:disconnected42}
 For the polytopes with $6$ lattice points of width $1$ with distribution of the lattice points $4+2$ between the two floors there are no disconnected lattice paths possible for the positions as in Table \ref{tab:allpolytopes}.

 \end{lemma}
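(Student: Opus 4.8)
The plan is to examine the polytopes with a $4+2$ split of lattice points between the two floors---by the enumeration in Table \ref{tab:allpolytopes} these are families 8, 10, 13, and 14---and to show directly that no lattice path through the six vertices can be disconnected. Recall that by Proposition \ref{prop:fan} the dual subdivision is trivial, so a lattice path arising from a tropical surface through points in Mikhalkin position must traverse edges of the polytope $P$; and by the preceding lemma the line $L$ carrying the point conditions enters the region dual to the $>$-minimal vertex $p_0$ and exits the region dual to the $>$-maximal vertex $p_n$. A \emph{disconnected} lattice path is one in which we drop two of the (at most) steps and the remaining segments split into two connected components; since we have $6$ vertices and need a path through all of them realizing the point conditions, the combinatorial possibilities for which segments to include are very limited.

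First I would fix, for each of the four families, the vertex order induced by $v = (1,\eta,\eta^2)$ with $0 < \eta \ll 1$: concretely, since $\tfrac1\eta$ exceeds the coordinate sums, the order is essentially lexicographic refined by the small perturbations, so in each case $p_0$ is the origin column and $p_5$ is the last column of the displayed matrix, and the four ``floor-$0$'' points come before the two ``floor-$1$'' points. Next I would list the edge set of $P$ in each family; as noted in the text, the edge--vertex combinatorics is constant in the parameters (with the two subcases $2b \lessgtr a$ for family 8), so this is a finite check. Then I would argue that a connected lattice path through all six points is forced to be essentially unique (or to have only the handful of variants recorded in Tables \ref{tab:bigtab1}): the path must start at $p_0$, end at $p_5$, and at each stage the next available vertex in $>$-order that is joined by an edge to the current endpoint is highly constrained.

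The crux is then to rule out disconnection. For a lattice path to be disconnected it must omit a segment $[a_i,a_{i+1}]$ at some intermediate step, i.e.\ skip from one vertex to a non-adjacent later vertex; but then the two pieces lie in different components, and I claim this is incompatible with $L$ having to pass, in $>$-order, through the cells dual to \emph{all} of $p_0,\dots,p_5$ while the surface is the fan dual to the trivial subdivision. Concretely, a disconnected path leaves some vertex $p_k$ ($0 < k < 5$) not incident to any chosen segment, which would mean the $2$-cell of the tropical surface dual to the corresponding edge of the path is missing, contradicting that every point condition lies on a $2$-cell forming the lattice path; equivalently, in the $4+2$ families the second component would have to consist of segments among $\{p_0,\dots,p_3\}$ and $\{p_4,p_5\}$ only, and one checks that no such splitting simultaneously covers all six points and uses only polytope edges. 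I expect the main obstacle to be purely bookkeeping: handling family 8's two subcases and making sure the edge lists are correct for all parameter values, since an erroneously omitted edge could spuriously permit a disconnected path; once the edge data is pinned down, the exclusion in each family is a short case analysis on which two segments are dropped.
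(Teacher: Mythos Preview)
Your approach has a genuine gap. You mischaracterize what a disconnected lattice path looks like: by Definition~\ref{def:partialorder}, a lattice path supported on $A$ must \emph{cover} $A$, so if the path is disconnected with one gap, every point of $A$ is still incident to some chosen segment. Concretely, for a binodal polytope with six lattice points there are three point conditions, hence three segments; a connected path then covers four lattice points (two are omitted), while a disconnected path with one gap covers five lattice points (one is omitted) with the three segments split into two components. Your sentence ``a disconnected path leaves some vertex $p_k$ not incident to any chosen segment'' is therefore false, and the contradiction you derive from it does not hold.

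More fundamentally, the question is not purely combinatorial. Even if one wrote down all disconnected three-segment configurations along polytope edges, one would still need to check whether each is realizable by a line $L$ with direction $(1,\eta,\eta^2)$, $0<\eta\ll1$, passing through the dual regions in the prescribed order --- and conversely, showing that no such configuration is an edge-path is not obviously sufficient without invoking that the trivial subdivision forces segments onto edges. The paper's proof bypasses the edge enumeration entirely and argues geometrically: a disconnected path would require $L$ to cross the fan at least four times, hence to visit at least five of the regions $\mathbb{R}^3\setminus S$. For families 8, 13, 14 the midpoint $B$ of the length-two edge is not a vertex, so there are exactly five regions and $L$ would have to visit all of them; one then checks that after $L$ has passed through $A^\vee$, $C^\vee$, $D^\vee$ its forward ray lies in $\{y>0,z>0\}$, whereas $E^\vee\cap\{y>0,z>0\}=\emptyset$. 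For family 10 all six lattice points are vertices, and one shows that $L$ can enter at most one of $B^\vee,C^\vee$ and at most one of $D^\vee,E^\vee$, so $L$ meets at most four regions. This is the argument you are missing.
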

 \begin{proof} 
 The only cases of binodal polytopes with 6 vertices and a lattice point distribution of $4+2$ are the polytope families 8, 10, 13 and 14. We translate the tropical surface dual to the polytope such that the vertex is at $(0,0,0)$.  The direction vector of the line $L$ on which the points of the point condition lie is $(1,\eta, \eta^2)$ with $0<\eta\ll 1$.
 
 In the following, we use the notation $\cdot^\vee$ to denote the dual object under the duality connection between the Newton polytope and the tropical surface.
 
 Consider a polytope from family 8, 13, or 14. 
 There can only exist disconnected lattice paths for one of these polytopes if the line $L$, on which the points from the point conditions are distributed, intersects with the tropical surface more than 3 times. Label the lattice points of the polytope with $A, B, C, D, E, F$, corresponding to the columns of the matrices in Figure \ref{tab:allpolytopes}. In these cases, point $B$ is not a vertex of the polytope. So, the line must pass through all five areas into which the tropical surface subdivides $\mathbb{R}^3$. After the line passes through $A^\vee, C^\vee, D^\vee$, we can see from examining the generators of these cones that the remaining ray of $L$ is contained in the $\{z>0, y>0\}$ quadrant of $\mathbb{R}^3$. However, for each of these polytopes, $E^\vee \cap \{z>0,y>0\} = \emptyset$. Thus, $L$ cannot pass through all five 3-dimensional areas defined by the tropical surface.
 
 Now consider a polytope from family 10.
 The line $L$ must pass through five or more of the $3$-dimensional areas into which the tropical surface subdivides $\mathbb{R}^3$.
 We can compute that $B^\vee \subset \{y<0,z>0\}$ and $C^\vee\subset \{y>0,z<0\}$, so we know that $L$ can only pass through one of them. Moreover, $D^\vee\subset \{y>0,z>0\}$ and $E^\vee\cap \{y>0\}\subset \{z<0\}$ while $E^\vee\cap\{z>0\}\subset \{y<0\}$, so again $L$ can only pass through one of these areas.   It follows that $L$ can pass at most through four of the six 3-dimensional areas defined by the tropical surface.
 \end{proof}
\begin{remark}
Under transformations, polytopes of family 10 can be brought into a position where the lattice points are distributed 3+3 between two floors. We will see this position again in Section \ref{subsec:degdsurface}. As the argument in the proof of Lemma \ref{lem:disconnected42} depends on the exact position of the vertices under transformation, this case has to be checked independently. However, an analysis using the same tools as above shows that there are still no disconnected lattice paths possible.
\end{remark}

\begin{lemma}\label{lem:nodiscon20}
 For the polytopes of families 20 and 21
 we can only have the lattice paths 
 shown in Table \ref{tab:bigtab2}. 
 \end{lemma}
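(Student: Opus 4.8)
The plan is to proceed exactly as in the proof of Lemma \ref{lem:disconnected42}, but now tracking which \emph{connected} lattice paths are combinatorially compatible with the order imposed by $v=(1,\eta,\eta^2)$. First I would normalize each polytope from families 20 and 21 by translating the dual tropical surface so that its unique vertex sits at $(0,0,0)$, so that every maximal cone $P^\vee$ of the surface is a genuine cone based at the origin. Then, using the $\langle\cdot,v\rangle$ pairing from Definition \ref{def:partialorder}, I would write down the total order $p_0<p_1<\cdots<p_5$ on the six lattice points; for family 21 one has to check that the inequalities $ad-bc=1$, $a,b,c,d>0$, $c+d>a+b$, $c>a$ force a single combinatorial ordering (this is where the normalization $ad-bc=1$, justified before the statement, is used). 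Since by Proposition \ref{prop:fan} the dual subdivision is trivial, the lattice path must run along edges of the polytope, so the admissible paths are exactly the sequences of edges of $P$ that are monotone with respect to the order $p_0<\cdots<p_5$ and that cover all six points (or all but the interior non-vertex point, with one omitted step, in the disconnected case).

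The core of the argument is the same sign-quadrant bookkeeping used for Lemma \ref{lem:disconnected42}: for each candidate first portion of the path $p_0^\vee, p_{i_1}^\vee, p_{i_2}^\vee,\ldots$ I would compute the generators of the cones the line $L$ has already traversed, deduce which open octant (or union of octants) of $\mathbb{R}^3$ the remaining ray of $L$ is confined to, and then check which of the remaining cones $p_j^\vee$ meet that region. Whenever a purported next step of the path corresponds to a cell $p_j^\vee$ disjoint from the admissible region, that path is ruled out; the surviving sequences are precisely the entries of Table \ref{tab:bigtab2}. For family 20, whose edge–vertex combinatorics is constant in $a$ (for $a\geq 3$, by the remark preceding Section \ref{subsec:6points}), this is a finite check; for family 21 I would argue that although the coordinates vary, the relevant sign conditions on the generators of the $p_j^\vee$ — the only data that matters for which octant they occupy — are constant across the allowed parameter range, so again one finite computation suffices. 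I would also record the disconnected case: exactly as in Lemma \ref{lem:disconnected42}, the non-vertex lattice point (if any) cannot be skipped because $L$ would then be forced through five or more maximal cells, and the octant analysis shows $L$ cannot meet that many, so no disconnected path occurs — matching Table \ref{tab:bigtab2}.

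The main obstacle I anticipate is family 21: unlike the other families it carries four free parameters, so I cannot simply pick representative coordinates and compute. The delicate step is to show that the octant in which each cone $p_j^\vee$ lives is \emph{independent} of $(a,b,c,d)$ within the region $\{ad-bc=1,\ a,b,c,d>0,\ c+d>a+b,\ c>a\}$; this amounts to verifying that the relevant $2\times 2$ minors of the vertex matrix keep constant sign on that semialgebraic set, which I would do by hand from the defining inequalities (e.g. $c>a$ and positivity already pin down several minors, and $ad-bc=1$ with positivity pins down the rest). A secondary, more bookkeeping-heavy obstacle is simply enumerating all edge-monotone covering sequences for each polytope and confirming none beyond those in Table \ref{tab:bigtab2} survive; this is routine but must be done carefully since missing a case would leave a gap, and one should double-check the borderline orderings where two lattice points are close in the $\langle\cdot,v\rangle$ order (so that the total order relies on the $\eta^2$ term, as in the genericity clause of Definition \ref{def:partialorder}).
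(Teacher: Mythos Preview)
Your proposal has two genuine gaps.

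First, your conclusion about disconnected paths is wrong. You write that ``no disconnected path occurs --- matching Table \ref{tab:bigtab2}'', but Table \ref{tab:bigtab2} displays one disconnected lattice path for family 20 and three for family 21. Your reasoning that ``the non-vertex lattice point (if any) cannot be skipped'' does not apply here: for both families all six lattice points are vertices of the polytope, so there is no non-vertex point to speak of. A disconnected path in this setting is not about omitting an interior lattice point; it is the situation where $L$ meets the surface in \emph{four} points (passes through five of the cones $p_i^\vee$), so that the three marked points pick out three of the four dual segments with a gap in between. The paper shows that for both families $L$ \emph{can} pass through five cones for certain orderings, and exactly the disconnected paths in the table survive.

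Second, and more seriously, the pure octant/sign analysis you propose is not fine enough to separate the admissible paths from the inadmissible ones. The paper does use containments like $B^\vee\subset\{z>0\}$, $E^\vee\subset\{z<0\}$ as a first filter, but that only cuts the list down. To eliminate the remaining candidates one must actually place three generic points $(0,0,0)$, $(1,\eta,\eta^2)$, $(\lambda,\lambda\eta,\lambda\eta^2)$ on $L$, write each as the vertex $(x,y,z)$ plus a nonnegative combination of two cone generators, and solve the resulting $9\times 9$ linear system. The obstruction then appears as a \emph{positivity} condition on one of the coefficients that forces an upper bound on $\lambda$ (or a dependency between $\eta$ and $\lambda$), contradicting genericity of the point configuration. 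For instance, for family 20 the path $A\text{--}B\text{--}C\text{--}D\,|\,F$ is excluded because solving gives $\gamma_1>0\Leftrightarrow \lambda+1<a$, while the companion path $A\,|\,B\text{--}C\text{--}D\text{--}F$ survives; an octant argument cannot distinguish these, since both correspond to $L$ traversing the same five cones $A^\vee,B^\vee,C^\vee,D^\vee,F^\vee$ in the same order. For family 21 the path $A\text{--}C\,|\,D\text{--}E\text{--}F$ is excluded by an analogous computation yielding a forced relation between $\eta$ and $\lambda$. Your plan needs this explicit equation-solving step.
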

\begin{proof}
We examine each possible lattice path through the vertices of the two polytopes and check whether it is possible for a line to pass through the corresponding regions and through points in Mikhalkin position. This is done by choosing general points $(0,0,0)$, $(1, \eta, \eta^2)$, $(\lambda, \lambda \eta, \lambda \eta^2)$, $ 0<\eta\ll 1\ll \lambda$ and letting $(x,y,z)$ be the vertex of the tropical surface. This gives rise to 9 equations, one for each coordinate of each point,  asserting that the point is contained in the claimed cell of the tropical surface, in 6 variables corresponding to the coefficients on the rays and the 3 variables $x,y,z$. Solving these equations either results in a solution or a contradiction. See Appendix \ref{app:lem:nodiscon} for the detailed computations. Techniques similar to those used in Appendix \ref{app:lem:nodiscon} show that the paths in Table \ref{tab:bigtab1} are all possible with respect to the point conditions.
\end{proof}

\begin{lemma}
 A disconnected lattice path of a polytope from family 20 or 21 can have at most one gap.
\end{lemma}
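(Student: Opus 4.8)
The plan is to combine Proposition~\ref{prop:fan} with the line-feasibility technique used in the proof of Lemma~\ref{lem:nodiscon20}, and then to reduce to a short finite check. By Proposition~\ref{prop:fan}, when a polytope $\Gamma$ from family 20 or 21 carries the two nodes, $\trop(S)$ is a fan with a single vertex $w$. Ordering the six lattice points of $\Gamma$ as $v_0<\dots<v_5$ with respect to $v=(1,\eta,\eta^2)$ as in Definition~\ref{def:partialorder}, these are dual to the six maximal cones $R_0,\dots,R_5$ at $w$, and $[v_i,v_j]$ is an edge of $\Gamma$ precisely when $R_i$ and $R_j$ share a wall, which is the $2$-cell dual to that edge. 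The line $L$ carrying the points in Mikhalkin position passes through the cones dual to the first and to the last lattice point, hence enters in $R_0$ and leaves in $R_5$; being a straight line it meets a strictly increasing subsequence $R_0=R_{j_0},R_{j_1},\dots,R_{j_k}=R_5$ of cones, and the lattice path it induces consists of the segments $[v_{j_{\ell}},v_{j_{\ell+1}}]$, each of which must be an edge of $\Gamma$. A gap of the path is then a block $R_{j_\ell+1},\dots,R_{j_{\ell+1}-1}$ of cones that $L$ skips, so the statement to prove is that $L$ can skip at most one such block.

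The first step is to record, as already noted in Section~\ref{subsec:paths}, that the edge--vertex combinatorics of families 20 and 21 is independent of the parameters ($a$ for family 20; $a,b,c,d$ for family 21). Consequently the list of candidate subsequences $R_0=R_{j_0}<\dots<R_{j_k}=R_5$ all of whose consecutive pairs span edges of $\Gamma$ is finite and parameter-free, and can be enumerated directly from the vertex matrices in Figure~\ref{tab:allpolytopes}; since there are only six cones and $R_0,R_5$ are forced, only a handful of these subsequences contain two gaps. The second step is, for each such two-gap candidate, to set up the feasibility system exactly as in the proof of Lemma~\ref{lem:nodiscon20} and Appendix~\ref{app:lem:nodiscon}: take general points $(0,0,0)$, $(1,\eta,\eta^2)$, $(\lambda,\lambda\eta,\lambda\eta^2)$ with $0<\eta\ll 1\ll\lambda$, set $(x,y,z)=w$, write the nine scalar membership equations in the six ray-coefficient unknowns together with $x,y,z$, and check for a contradiction. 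Every two-gap system turns out to be inconsistent, while the surviving (one-gap) disconnected paths are precisely the disconnected entries already listed in Table~\ref{tab:bigtab2}.

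I expect the main obstacle to be purely organizational: making sure the enumeration of two-gap candidates is exhaustive --- which is exactly why the parameter-independence of the combinatorics must be invoked first, so that no candidate is missed by specializing the parameters --- and then verifying that each of the finitely many feasibility systems genuinely has no solution rather than an overlooked degenerate one. Family 21 is the least transparent because of its four free parameters, but its combinatorial type is fixed, so the computation is no harder than for family 20; moreover one can shortcut most of it by the sign-tracking argument already used in the proof of Lemma~\ref{lem:disconnected42}: after $L$ has crossed the walls realizing the first gap, the remaining ray of $L$ is pinned into a fixed sign region of $\mathbb{R}^3$, and none of the walls needed to realize a second gap meets that region. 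This is the heart of the proof, and it is short once the combinatorial setup is in place.
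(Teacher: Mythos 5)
Your argument breaks down at the very first step, where you identify a ``gap'' of the lattice path with a block of cones $R_{j_\ell+1},\dots,R_{j_{\ell+1}-1}$ that $L$ skips. That is not what a gap is. When $L$ jumps directly from $R_{j_\ell}$ to $R_{j_{\ell+1}}$, the induced path simply acquires the single segment $[v_{j_\ell},v_{j_{\ell+1}}]$ and the intermediate lattice points are dropped from the support $A$; the union of segments remains connected. A gap, in the sense of Definition~\ref{def:partialorder}, is a \emph{dropped segment} of $P(A)$: the line $L$ crosses a wall of the fan but no marked point $q_i$ lies on that crossing, so the dual edge is missing from the path and the union of segments disconnects there. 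Your setup ignores the distinction between marked and unmarked wall crossings entirely (you assert that the path consists of \emph{all} segments $[v_{j_\ell},v_{j_{\ell+1}}]$), and under your reading the lemma is actually false: for family 20 the connected path $A-B-D-F$ skips the two singleton blocks $\{C\}$ and $\{E\}$, yet it is feasible and appears in the paper's list of possible connected paths. Consequently your proposed finite check of ``two-gap candidates'' enumerates the wrong objects, and the claimed conclusion that every such system is inconsistent is not correct.

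The actual argument is a short counting step followed by a citation of what is already proved. A binodal polytope with $6$ lattice points absorbs exactly $3$ point conditions, so the lattice path has $3$ segments; a path with $3$ segments and $2$ gaps must be supported on all $6$ lattice points (three disjoint segments $[v_0,v_1]\,|\,[v_2,v_3]\,|\,[v_4,v_5]$), which forces $L$ to traverse all six $3$-dimensional regions of $\mathbb{R}^3\setminus S$. The proofs of Lemmas~\ref{lem:disconnected42} and~\ref{lem:nodiscon20} already show this is impossible for every binodal family — for families 20 and 21 because $B^\vee\subset\{z>0\}$ while $E^\vee\subset\{z<0\}$ and all coordinates increase along $L$. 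No new feasibility systems need to be solved. Your instinct to reuse the sign-tracking from Lemma~\ref{lem:disconnected42} is the right one, but it must be applied to the full six-region traversal forced by two genuine gaps, not to sequences of cones with two skipped blocks.
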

\begin{proof}
If there were a disconnected lattice path with two gaps, then the path would contain all lattice points of the binodal polytope. This means that the line $L$ on which the points from the point condition are distributed passes through all the areas of $\mathbb{R}^3\setminus S$. It is a conclusion from the proofs of Lemma \ref{lem:disconnected42} and \ref{lem:nodiscon20} that this is not possible for any of our binodal polytope families.
\end{proof}

\begin{conjecture}
The degree of the binodal locus of binodal polytopes in families $10$, $13$, $14$, $20$ and $21$, as well as the multiplicities of their different lattices paths, are as stated in Tables \ref{tab:bigtab1} and \ref{tab:bigtab2}. 
\end{conjecture}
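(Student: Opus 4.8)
The conjecture asserts two things for each of the five families (10, 13, 14, 20, 21): first, that the degree of the binodal locus $B_\Gamma$ matches a specific value recorded in Tables \ref{tab:bigtab1} and \ref{tab:bigtab2}, and second, that the contribution of each listed lattice path, counted with the stated multiplicity, is correct. The plan is to address these two parts by rather different means. For the \emph{degree} of $B_\Gamma$, the natural approach is a direct computation in \texttt{Singular} or \texttt{OSCAR}: for each family one sets up the ideal generated by $f$, $\partial f/\partial x$, $\partial f/\partial y$, $\partial f/\partial z$ together with the genericity of a second singular point, eliminates to obtain the ideal of $B_\Gamma \subset \mathbb{P}^5$, and reads off its degree. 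Because the families carry a parameter $a$ (or $a,b,c,d$), one must argue that the degree is \emph{independent} of the parameter in the ranges listed; as observed in Section \ref{subsec:paths}, the edge-vertex combinatorics of these polytopes is constant across parameter choices (with the single bifurcation for family 8, which does not occur here), and an IUA-equivalence or a flat-family argument over the parameter should reduce all members of a family to a single representative whose degree is then computed once.

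For the \emph{multiplicities of the lattice paths}, the strategy is to fix a binodal polytope $\Gamma$ in one of the five families, fix one of the admissible lattice paths $P$ from Table \ref{tab:bigtab1} or \ref{tab:bigtab2}, and count — with the appropriate local multiplicity coming from the discriminant — the complex surfaces with Newton polytope $\Gamma$ that pass through points in Mikhalkin position and whose tropicalization realizes $P$. By Proposition \ref{prop:fan}, any such surface tropicalizes to a fan with both nodes at the vertex, so the dual subdivision is the trivial one and the path must traverse edges of $\Gamma$; Lemmas \ref{lem:disconnected42} and \ref{lem:nodiscon20} already pin down \emph{which} paths occur. What remains is to compute, for each path, the number of lifts: this is governed by the linear system cutting out the coefficients $c_1,\dots,c_6$ from the point conditions (as in the proof of Proposition \ref{prop:fan}, where $L$ was a projective line) intersected with the binodal variety $B_\Gamma$, and the multiplicity is the intersection multiplicity of $B_\Gamma$ with that linear space at the relevant points, equivalently the multiplicity of the corresponding point of the discriminant. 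Concretely this is again a \texttt{Singular}/\texttt{OSCAR} computation: substitute the Mikhalkin-position point conditions into $f$, intersect with the binodal ideal, and count solutions with multiplicity.

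The main obstacle I expect is twofold. First, the parameter-uniformity: one must be careful that the degree of $B_\Gamma$ and the path multiplicities genuinely do not jump at small parameter values — the preceding proposition already shows that several small parameter choices (e.g.\ $a=1,2$ in family 10, $a=2,3$ in families 13, 14, $a=3$ in family 20) are \emph{not} binodal or have the wrong dimension, so the tables must implicitly restrict to the parameter ranges where the expected dimension is attained, and the argument needs to make this explicit and then establish constancy on each such range. Second, and more seriously, the computations realizing the discriminant multiplicities along each lattice path are delicate: one has to encode the Mikhalkin/tropical-general-position constraints on the points (the $\eta \ll 1 \ll \lambda$ scalings from the proof of Lemma \ref{lem:nodiscon20}) in a way that is amenable to symbolic computation over $\mathbb{C}\{\{t\}\}$ or over $\mathbb{C}$, and verify that no solutions are lost or spuriously gained in the limit. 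This is precisely the kind of subtlety — lifting multiplicities and pentatope contributions depending on point conditions and the discriminant — flagged in the Remark following Definition \ref{def:multiplicities}, and it is the reason the statement is phrased as a conjecture rather than a theorem: the symbolic computations have been carried out in \cite{Code21, G22} and are consistent with the tables, but a complete proof would require either a closed-form analysis of these linear sections of the discriminant for each family or a verified exact computation valid for all admissible parameters simultaneously.
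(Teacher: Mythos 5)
This statement is a conjecture in the paper, and the paper offers no proof of it: immediately after stating it, the authors explain that the tables rest on an extrapolation from finitely many parameter values, computed by substituting generic values for the remaining coefficients into the binodal variety and reading off degrees in \texttt{Singular}/\texttt{OSCAR} (with the verified ranges recorded in the tables). Your proposal describes essentially this same computational scheme and correctly identifies the obstruction to upgrading it to a proof — uniformity in the parameters, which would require symbolic Gr\"obner-basis computation with parameters in the exponents, exactly the difficulty the authors name in their closing remark — so it is consistent with the paper; just be aware that neither your plan nor the paper constitutes a proof, and your phrasing of the multiplicity computation as an intersection of $B_\Gamma$ with the linear space of point conditions is a reformulation of the paper's recipe of specializing all but the two omitted parameters to generic values (with the slightly different recipe for disconnected paths, where one parameter is kept variable and the later ones are tied to it by generic linear relations).
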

The conjecture relies on an extrapolation of the values that were computable. We do not make a conjecture for polytope family 8, but we give the results of our computations in Table \ref{tab:degreepolyA8}.

We compute the multiplicity of a lattice path for a binodal polytope as follows. 
For a connected lattice path that leaves out the two lattice points $a_i$ and $a_j$ we substitute the remaining parameters in the binodal variety (i.e., all but $a_i$ and $ a_j$) by generic values. Computing the degree of this variety gives the multiplicity of the lattice path. The values for the parameters have not been chosen generically enough if the dimension of the variety in $a_i$ and $a_j$ is not zero.

For a disconnected lattice path the procedure is similar.  If the segment $[a_i,a_i+1]$ is the one filling the gap in the disconnected lattice path and the path up to $a_i$ is connected, then we proceed with the parameters up to $a_i$ as described before. The parameter $a_{i+1}$ does not get assigned a value, but stays variable. However, all the following parameters, i.e., $a_j$ with $j>i+1$, will be substituted by generic linear equations in $a_j,$ such that the quotient $a_{j}/a_{i+1}$ is constant. The degree of the new variety gives the multiplicity of the disconnected lattice path. The values and linear equations are generic enough if the variety is of dimension 0.

We performed all these computations using \texttt{Singular} \cite{singular}, and code in \texttt{OSCAR} \cite{oscar} will be available, see \cite{Code21}. 
We note that this construction works even for invalid paths, e.g., paths that do not satisfy the previous lemmas. Hence, a non-zero multiplicity resulting from these computations does not imply that the lattice path contributes to the total count.

\begin{table}[]
    \centering
    \begin{tabular}{p{1.4cm}|c | c | c|p{1.3cm}}
    Family & Conjectured Degree & Lattice paths & Conjectured Multiplicity & Verified\\
    \hline
8 & -- &  \includegraphics[width=1 in]{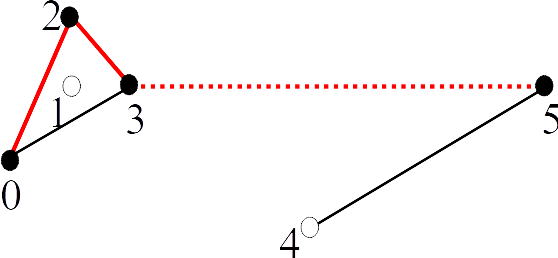} & -- & --\\
 &  &  
   \includegraphics[width=1 in]{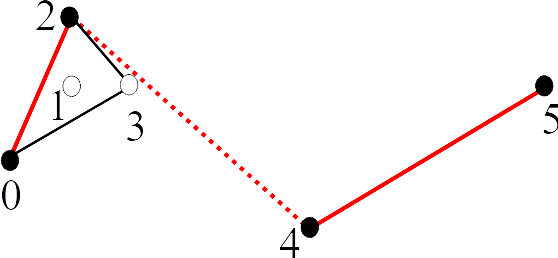} 
 & -- & \\
  &  &
  \begin{minipage}{1.2 in}
  \hspace{-0.13 in}
  - - - - - - - - - - - - - - -
   \hspace{-0.2 in}
  
     \includegraphics[width=1 in]{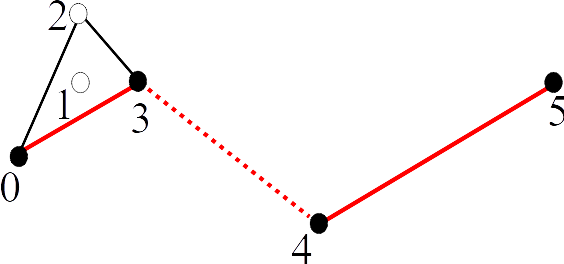} 
     
     \hspace{0.7 in} $a<2b$
      
      \vspace{0.1 in}
  \end{minipage}
 & -- & \\
 \hline
10 & $(a-2)(a+b+2)$ & \includegraphics[width=1 in]{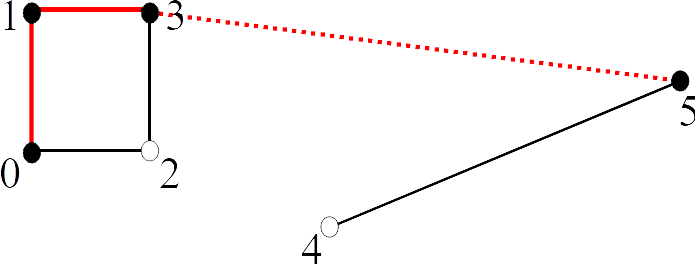}  & $a-2$  & $a\leq 7,$\newline $b\leq 4$\\
&& \includegraphics[width=1 in]{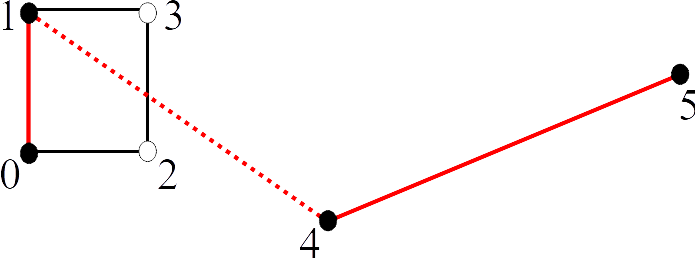}& $a(a-2)$ &\\
&&\includegraphics[width=1 in]{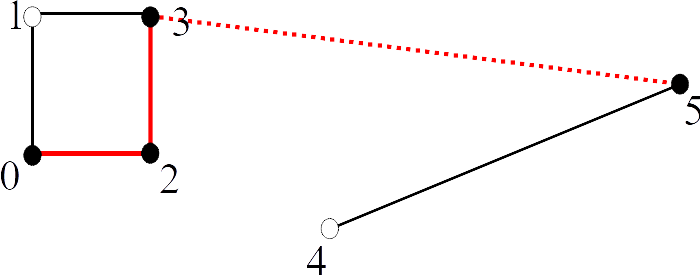}& $a-2$ &\\ 
&&\includegraphics[width=1 in]{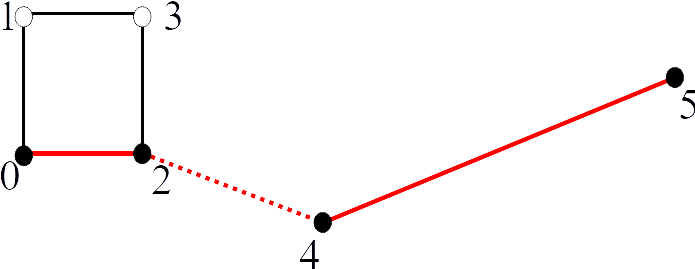}&  $b(a-2)$ &\\
\hline
13 & \shortstack{$\frac{1}{2}(a^2 + b^2 +(\kappa-2)(a+b))$ \\ $+ ab-4+\kappa$}  
& \includegraphics[width=1 in]{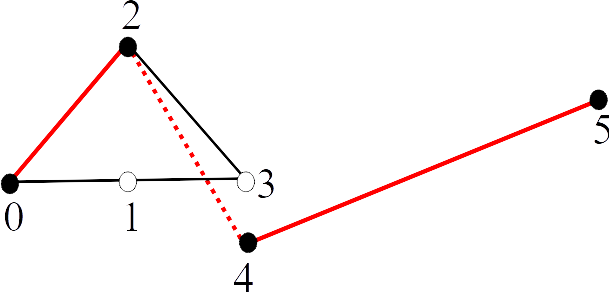}  & \shortstack{$\frac{1}{2}(a^2 -b^2 +(\kappa-4)(a-b))$} & $a\leq 9,$ \newline$b\leq 8$ \\
&& \includegraphics[width=1 in]{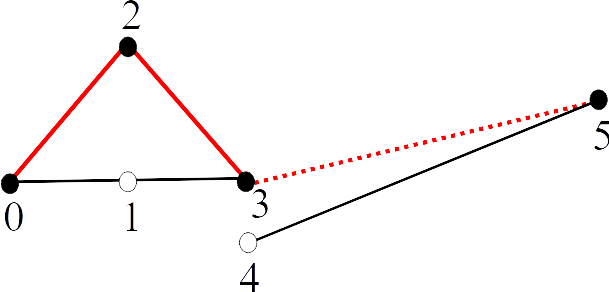}&  $a+b-4+\kappa$ &\\
&& \includegraphics[width=1 in]{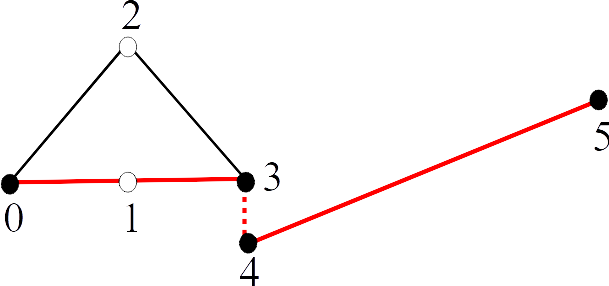}& $b(a+b-4+\kappa)$ &\\
\hline
14 & $(a+1)(a+b-4+\kappa)$ & \includegraphics[width=1 in]{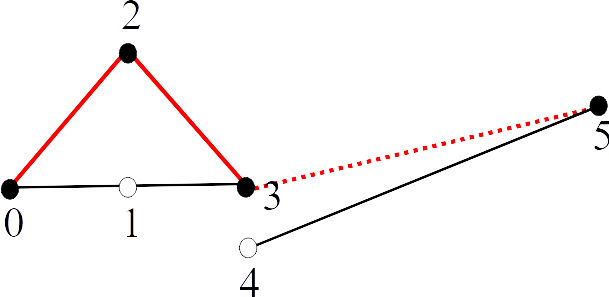}  & $a+b-4+\kappa$ & $a \leq 9,$\newline $b \leq 7$ \\
&& \includegraphics[width=1 in]{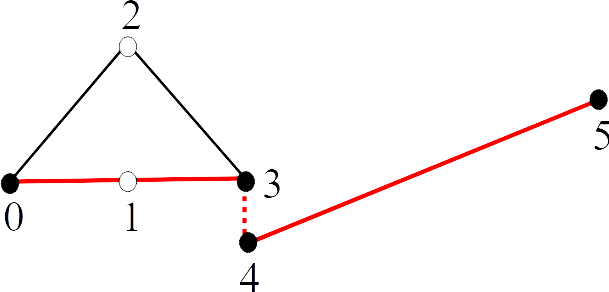}& $a(a+b-4+\kappa)$ &\\
    	\hline
    \end{tabular}
    \caption{The conjectured degree of the binodal locus depending on $a$ and $b$, possible lattice paths, and their conjectured multiplicities for polytopes in families 8, 10, 13 and 14. . Polytopes from family 10 are only binodal when $a \geq 3$. For polytope families 13 and 14, the polytope is only binodal if, for $b=1$, we have $a > 3$. In the formulas for polytopes 13 and 14 we set $\kappa = a+b \mod 2$.
    }
    \label{tab:bigtab1}
\end{table}

	\begin{table}[]
    \centering
    \begin{tabular}{c|c | c | c| c}
    Family & Conjectured Degree & Lattice paths & Conjectured Multiplicity & Verified \\
\hline
20 & $(a-3)(a+2)$ & \includegraphics[height=0.7 in]{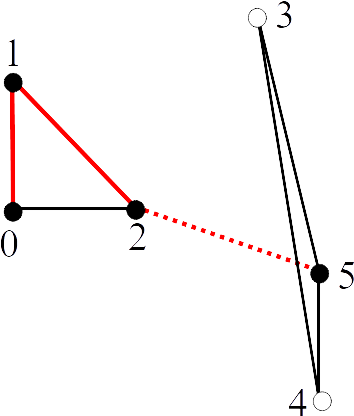}  &  $a-3$  & $a \leq 7$\\
&& \includegraphics[height=0.7 in]{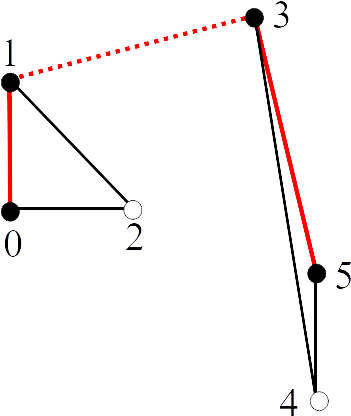} & $a-3$ & \\
&& \includegraphics[height=0.7 in]{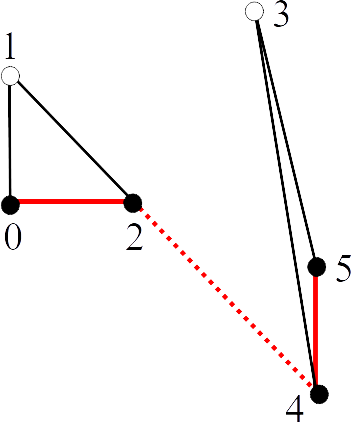} & $a-3$ & \\
&& \includegraphics[height=0.7 in]{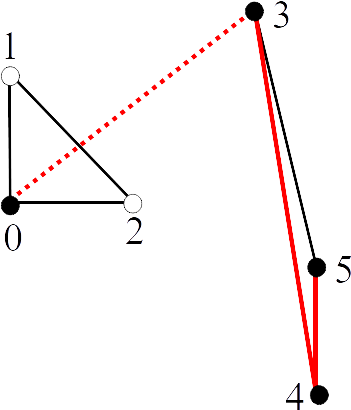} & $a-3$ &\\
&& \includegraphics[height=0.7 in]{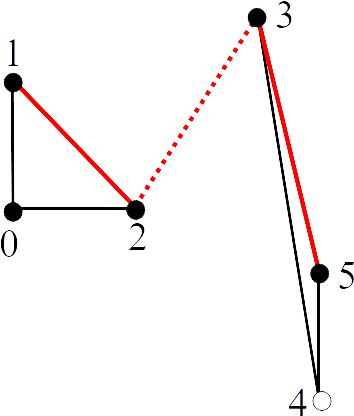} &$(a-2)(a-3)$&\\
\hline 
21 & $(d+c+2)(d+c-4)$ & \includegraphics[height=0.7 in]{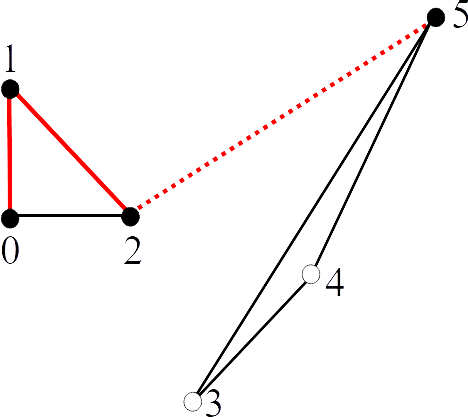} & $c+d-4$ & 
\begin{minipage}{1.5 in}
$a,b,c,d \leq 5$ 
\end{minipage}
 \\ 
&&\includegraphics[height=0.7 in]{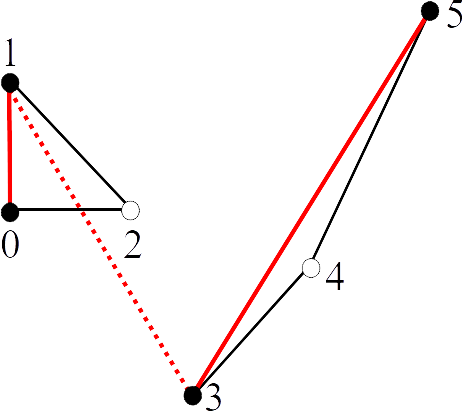} &  $c(c+d-4)$ & \\
&& \includegraphics[height=0.7 in]{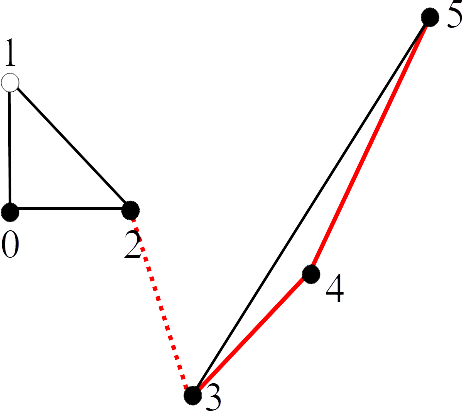} &  $c+d-4$ &  \\
&& \includegraphics[height=0.7 in]{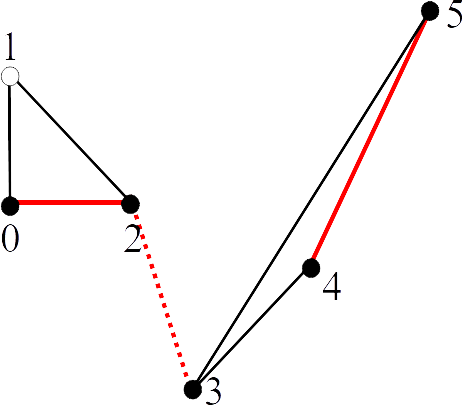} &   $(d-b)(c+d-4)$ & \\
&& \includegraphics[height=0.7 in]{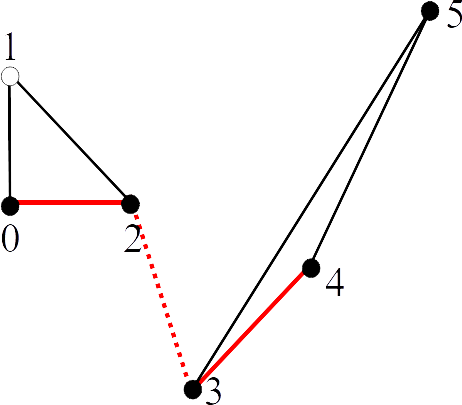}	& $b(c+d-4)$ &  \\
	\hline
    \end{tabular}
    \caption{The conjectured degree of the binodal locus depending on $a$ and $b$, possible lattice paths, and their conjectured multiplicities for polytopes in families 20 and 21. Polytopes from family 20 are only binodal when $a > 3$.}
    \label{tab:bigtab2}
\end{table}

\begin{table}[h]
    \centering
    \begin{tabular}{c|c|c|c|c|c}
		$b \setminus a $ & 3& 4& 5& 6& 7  \\ \hline
		1  &  8 & 20& 24& 56& 80\\
		&   3+5&6+14&6+18&12+44&15+65 \\
		\hline
		2  &  8 & x& 33& x& 60\\
		&   3+(4+1)&  &9+24& &12+48\\
		\hline
		3  &  x & 20& 33& x& 70  \\
		&   & 6+(10+4)& 9+(21+3)&  & 15+55   \\
		\hline
		4  &  x & x&  24&  x& 70  \\
		&    &  &  6+(12+6)&   & 15+(50+5)   \\
		\hline
		5  &  x & x& x& 56& 60 \\ 
		&    &  &  & 12+(28+16)& 12+(36+12) \\ 
		\hline
		6  &  x & x& x& x& 80   \\ 
		&   & & & & 15+(40+25)  \\ 
	\end{tabular}
    \caption{The degree of the binodal variety and the multiplicities of the lattice paths for 
    polytopes in family 8. These were computed using \texttt{Oscar}.}
    \label{tab:degreepolyA8}
\end{table}
\clearpage

\subsection{A complete binodal counting example}
Consider the polytope given by the convex hull of 
\begin{equation*}
    P =\begin{pmatrix}
    0&0&0&0&0& 1&1\\
    0&0&1&1&2& 1&4\\
    0&1&0&1&0& 0&1
    \end{pmatrix}.
\end{equation*}
This is a polytope from family 13 with $a=3,\,b=1$ plus an additional point. 
To enumerate all binodal surfaces with this support that satisfy our point conditions, we consider the possible floor plans and their multiplicities.
The first floor is a curve dual to a subdivision of $P_1= \conv\{ (0,0),(0,1),(1,0),(1,1),(2,0)\}$,
while the second floor is a line with direction vector orthogonal to the line spanned by  the points $(1,0)$ and $(4,1).$ 
Alignment of the second floor with a $4$-valent vertex of the first floor gives a node germ corresponding to a binodal polytope. There are three possible subdivisions of $P_1$ that give rise to such a vertex.
Below we investigate each case, draw the floor plan and the lattice paths, and compute the multiplicities.

A 4-valent vertex where each edge has weight 1 is dual to a parallelogram in the subdivision. The first option to include a parallelogram in $P_1$ is the square given by $(0,0,0)$, $(0,0,1)$, $(0,1,0)$, $(0,1,1)$.
 Figure \ref{fig:ltpt1} shows in one picture the three floor plans that are induced by this alignment. The different lattice paths encode the ways for the curves to satisfy the point conditions. The first two choices are very similar and are therefore displayed in the same floor plan. The gray points correspond to the first lattice path, the black points to the second lattice path. For clarity the third choice of point conditions is displayed in a separate floor plan.

  \begin{figure}[h]
     \centering
     \begin{subfigure}[b]{.48\textwidth}
         \centering
         \begin{subfigure}[b]{.6\textwidth}
         \includegraphics[width=\textwidth]{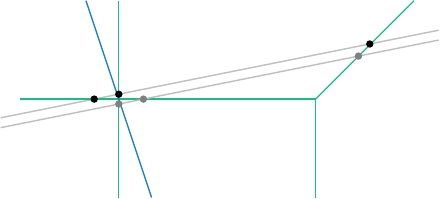}
         
               \includegraphics[width=0.48\textwidth]{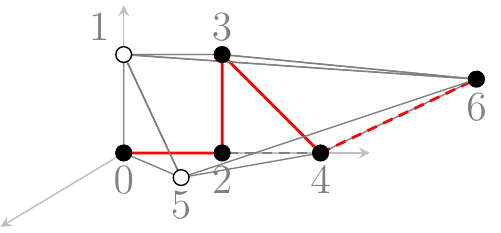}
             \includegraphics[width=0.48\textwidth]{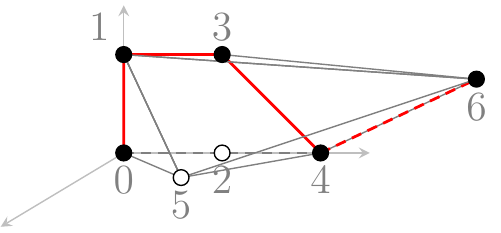}
             \end{subfigure}
             \begin{subfigure}[b]{.3\textwidth}
         \includegraphics[width=\textwidth]{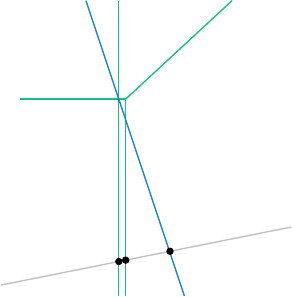}
             \includegraphics[width=0.95\textwidth]{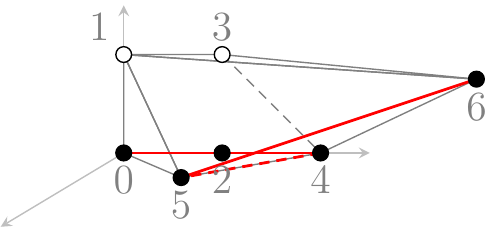}
             \end{subfigure}
         \caption{}\label{fig:ltpt1}
     \end{subfigure}
     \begin{subfigure}[b]{.29\textwidth}
         \centering
         \includegraphics[width=0.8\textwidth]{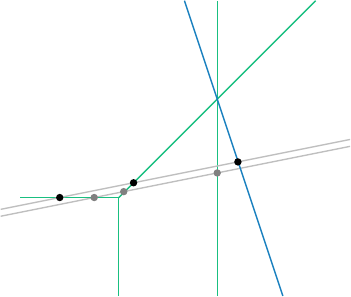}
         
         \includegraphics[width=0.48\textwidth]{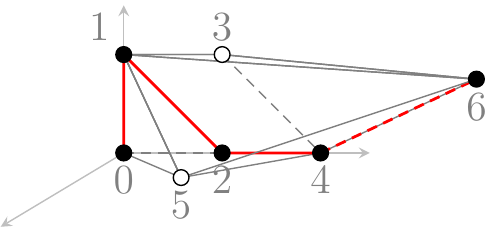}
             \includegraphics[width=0.48\textwidth]{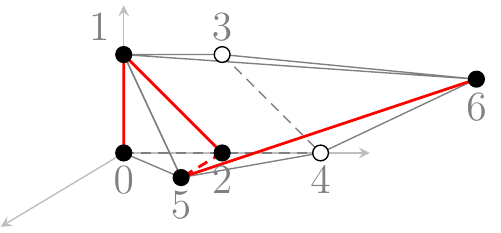}

         \caption{}\label{fig:ltpt2}
     \end{subfigure}
      \begin{subfigure}[b]{.19\textwidth}
         \centering
         \includegraphics[width=\textwidth]{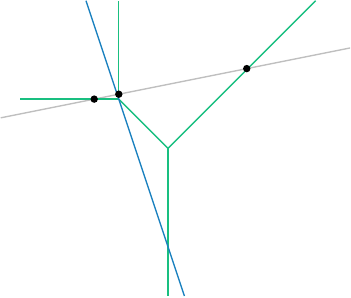}
         
                  \includegraphics[width=0.8\textwidth]{poly13+1-path2.pdf}
         \caption{}\label{fig:ltpt4}
     \end{subfigure}
     \caption{Floor plan and lattice paths for each alignment option.}
     \label{fig:13+1sub2}
 \end{figure}
 
 All three lattice paths give rise to the same subdivision consisting of two tetrahedra and a polytope equivalent to a polytope from family 10 with $a=3, b=1$:
\begin{equation*}
    \begin{pmatrix}
    0&0&0& 1\\
    1&1&2& 4\\
    0&1&0& 1
    \end{pmatrix}, \begin{pmatrix}
    0&0&1& 1\\
    1&2&1& 4\\
    0&0&0& 1
    \end{pmatrix} \text{ and }  \begin{pmatrix}
    0&0&0&0& 1&1\\
    0&0&1&1& 1&4\\
    0&1&0&1& 0&1
    \end{pmatrix}.
\end{equation*}
From the computations for family 10, as displayed in Table \ref{tab:bigtab1}, we see that these three paths each have multiplicity $(a-2) = 1$ (since $b=1$). These results can also be computed directly in \texttt{OSCAR} by using the functions in \cite{Code21}.

The second option for a 4-valent vertex with weights 1 in the first floor is given by the parallelogram $(0,0,1),(0,1,0),(0,1,1),(0,2,0)$. The corresponding alignment is depicted in Figure \ref{fig:ltpt2}. There are two options how the floor plan can satisfy the point conditions, giving two possible lattice paths. In the figure, gray points correspond to the first lattice path, the black points to the second lattice path. The difference is choosing the last or second to last intersection of the gray line with floor plan curves.

 Both lattice paths give rise to the same subdivision consisting of two tetrahedra and a polytope equivalent to a polytope from family 10 with $a=4, b=1$:
\begin{equation*}
    \begin{pmatrix}
    0&0&0& 1\\
    0&0&1& 1\\
    0&1&0& 0
    \end{pmatrix} \text{ and }  \begin{pmatrix}
    0&0&0&0& 1&1\\
    0&1&1&2& 1&4\\
    1&0&1&0& 0&1
    \end{pmatrix}.
\end{equation*}
Applying IUA-equivalence, we can bring the polytope with 6 lattice points in the form as in family $10$ and read the multiplicity off from Table \ref{tab:bigtab1}. We see that the left lattice path has multiplicity 2, while the right lattice path has multiplicity 8. So this alignment gives rise to two floor plans that together contribute 10 surfaces. Again this can be confirmed by a direct computation in \texttt{OSCAR}.

It is also possible to have two separated nodes. 
Considering the curves from the previous floor plan, we observe another alignment possibility. We can align the second floor with the 3-valent vertex, thus giving rise to a pentatope in the subdivision, which has multiplicity $3$ due to the shape of the pentatope, see \cite[Lemma 4.4]{MaMaSh18}. 
As the second floor also intersects the weight 2 vertical end, we obtain our second node which has multiplicity $2$, see \cite[Lemma 4.4]{MaMaSh18}.
This floor plan and the corresponding lattice path are shown in Figure \ref{fig:ltpt4}. The induced subdivision consists of a pentatope and two tetrahedra with an edge of length 2:
\begin{equation*}
    \begin{pmatrix}
    0&0&0&1& 1\\
    0&1&2&1& 4\\
    0&0&0&0& 1
    \end{pmatrix}, \begin{pmatrix}
    0&0&0&0& 1\\
    0&1&1&2& 4\\
    0&0&1&0& 1
    \end{pmatrix} \text{ and }  \begin{pmatrix}
    0&0&0&1&1\\
    0&0&1&1&4\\
    0&1&1&0&1
    \end{pmatrix}.
\end{equation*}
This case gives a total multiplicity of $6$.

 These are the only possibilities, which follows from an extensive investigation of lattice paths and subdivisions of the polytope $P$. So, in total we obtain $(1+1+1)+(2+8)+6=19$ surfaces. We can verify computationally that this is the correct number: The binodal variety of $P$ can be computed via \texttt{Singular} \cite{singular} or \texttt{OSCAR} \cite{oscar,Code21}, and it is of degree $19$.

\subsection{Binodal polytopes in degree $d$ surfaces}\label{subsec:degdsurface}

We now turn to the question of counting binodal degree $d$ surfaces. In \cite{BG20}, we found 214 of 280 binodal cubic surfaces passing through 17 general points. With the binodal polytopes on 6 vertices discussed above, can we add any more surfaces to this count? 

We first recover an observation from \cite[Section 5.1]{BG20} that we can now phrase as a definition.

\begin{definition}\label{def:doublerightstring}
We fix points in Mikhalkin position in $\mathbb{R}^3$ and consider their projections $q_i$ to the $yz$-plane. Let $C$ be a tropical plane curve passing through the $q_i$. If none of the points $q_i$ are contained in the edges dual to the edges of the lower right triangles in the subdivision of the Newton polytope of $C$, 
then we can both prolong the edge in direction $(2,1)$ and the edge in direction $(1,0)$. This produces a family of curves that all pass through the chosen points in Mikhalkin position. The union of the two ends together with the unfixed horizontal edge is called a \emph{double right string}. See Figure \ref{fig:doublestring}.

\begin{figure}
    \centering
    \includegraphics[width = 0.2\textwidth]{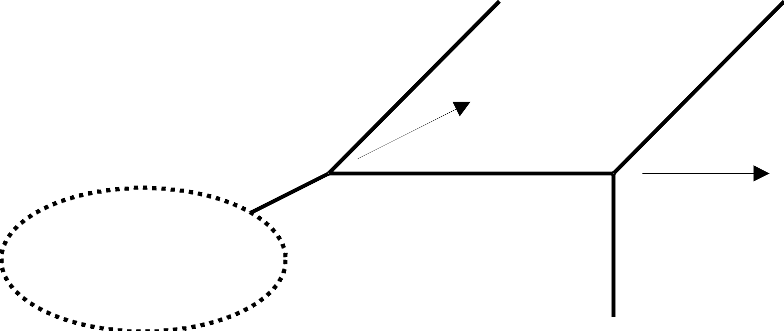}
    \caption{A double right string.}
    \label{fig:doublestring}
\end{figure}
\end{definition}
\begin{remark}
In accordance with the naming of the right and left strings from \cite{MaMaShSh19} we call it a double \emph{right} string, even though we will not look at the left analogue. This is because a ``left double string'' would have two degrees of freedom in the directions $(-1,-1)$ and $(-1,-2)$, for which the slopes are too steep to allow any curve fixing alignments.
\end{remark}

\begin{proposition}
\label{prop:541}
Let $\Gamma$ be a binodal surface of degree $d$ passing through points in Mikhalkin position.
Then the dual subdivision to $\Gamma$ cannot contain polytopes of families 8, 14 or 21. 
\end{proposition}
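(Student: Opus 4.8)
The plan is to argue by contradiction: assume the dual subdivision of $\Gamma$ contains a cell $P$ that is IUA-equivalent to a member of family 8, 14 or 21, and extract an incompatibility between the way $P$ can sit inside a floor-decomposed surface and the constraints that Mikhalkin position puts on the lattice path and on the shapes of the neighbouring cells.

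First I would record the structural reduction. Since $\Gamma$ is assumed floor decomposed, the cell $P$ is not subdivided further (the remark following Proposition \ref{prop:fan}) and has width $1$; after a lattice transformation fixing the floor direction, $P$ lies in a slab $\{k \le x \le k+1\}$, so it has two ``horizontal'' faces $F_- = P \cap \{x=k\}$ and $F_+ = P \cap \{x=k+1\}$, each of dimension $1$ or $2$. A floor-decomposed surface through points in Mikhalkin position induces on each floor a tropical plane curve through the projections $Q_i$, whose dual subdivision is of the special column-wise type studied in \cite{GaMa07} (see the discussion preceding Theorem \ref{mainthm1}); consequently $F_\pm$, and the cells of the subdivision adjacent to them, must be compatible with such column-wise subdivisions. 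Finally, the lattice path of $\Gamma$ restricts on the $1$-skeleton of $P$ to one of the lattice paths listed for the relevant family in Tables \ref{tab:bigtab1} and \ref{tab:bigtab2} (Lemmas \ref{lem:disconnected42} and \ref{lem:nodiscon20}), and globally it may switch from the plane $\{x=k\}$ to the plane $\{x=k+1\}$ only at the unique ``corner'' dictated by the order induced by $v=(1,\eta,\eta^2)$.

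Now I would treat the three families. Family 8 should be the quickest: in the normal form of Table \ref{tab:allpolytopes} its $2$-dimensional horizontal face is $\mathrm{conv}\{(0,0),(2,1),(1,2)\}$, which contains the lattice point $(1,1)$ in its interior, and a column-wise subdivision of a plane curve through stretched points has no cell with an interior lattice point (cells are triangles and parallelograms, and the higher-area triangles coming from weighted edges still carry all their lattice points on the boundary); the parameter constraints $\gcd(a,b)=1$, $0<b<a$, $2b\neq a$ leave no alternative width-$1$ position of $P$ with an admissible horizontal face. For family 14 I would contrast with family 13, which shares the horizontal face $\mathrm{conv}\{(0,0),(2,0),(1,1)\}$ but whose remaining vertex yields the slanted edge in the ``shallow'' direction $(a,b)$ with $0<b<a$, whereas family 14's slanted edge $F_+$ points in the ``steep'' direction $(b,a)$; as already flagged for left double strings, such a steep edge cannot be matched by the corresponding bounded edge in the adjacent floor once that floor passes through points in Mikhalkin position, so every lattice path of family 14 from Table \ref{tab:bigtab1} runs into a contradiction. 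For family 21, both horizontal faces are unimodular triangles, so there is no node germ adjacent to $P$ in either neighbouring floor; moreover the unimodular triangle $\mathrm{conv}\{(0,0),(a,b),(c,d)\}$ has width $\ge 2$ in both coordinate directions, since $c>a\ge 1$ forces $c\ge 2$ and $ad-bc=1$ with $b,d>0$ rules out $b=d=1$, so it cannot be placed inside a single column; combining this with the admissible lattice paths from Lemma \ref{lem:nodiscon20} and the corner constraint eliminates family 21.

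I expect family 21 to be the main obstacle. Because its two horizontal faces are individually harmless, the contradiction has to be produced globally, from the impossibility of routing the lattice path of $\Gamma$ through $P$ while keeping the faces $F_\pm$ (and the cells around them) inside the column-wise subdivisions of the floors and respecting the forced transition corner. Making this watertight requires enumerating, up to the shears preserving the floor direction, all positions in which each $P$ from families 8, 14, 21 can sit in a width-$1$ slab --- the remark following Lemma \ref{lem:disconnected42} shows that already family 10 admits genuinely different such positions that must be checked separately --- and then testing each admissible lattice path against these positions. It is this bookkeeping, rather than any single conceptual hurdle, where the real effort lies; family 8 I expect to collapse to the one-line interior-lattice-point obstruction, and family 14 to the steepness phenomenon.
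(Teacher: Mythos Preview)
Your overall architecture---fix a width-$1$ position of $P$, restrict the lattice path, and show the horizontal faces are incompatible with the column-wise floor subdivisions---matches the paper's. Family~8 is handled the same way in both: the interior lattice point in the $x=0$ face forces at least three omissions from the path, which is one more than the two nodes allow.

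The gap is in how you treat families 14 and 21. For family~14 your ``steep direction $(b,a)$'' argument is only valid in the normal form of Table~\ref{tab:allpolytopes}; it does not survive the IUA-transformations you yourself flag as necessary. The paper makes the case distinction not by the slanted edge but by the \emph{length-$2$ edge}, which under Mikhalkin constraints can only be horizontal, diagonal, or vertical in $d\Delta_3$. In the horizontal case the slanted edge becomes $(b,a)\mapsto(1,a)$ when $b=1$, which \emph{does} fit between two columns, so your obstruction evaporates; the paper then has to argue separately that the required alignment (via a left string or a movable $Y$-string) is geometrically impossible because the relevant vertex sits on the wrong side of the point line. The diagonal and vertical cases need their own analyses. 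So ``steepness'' is not the mechanism; alignment failure after the polytope has already been legally placed is.

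For family~21 you set yourself up for an exhaustive position-by-position check, but the paper short-circuits this with an IUA-invariant: polytopes of family~21 have \emph{no pair of parallel edges}. Any polytope arising between two floors from an alignment of two $3$-valent vertices (the only thing two omitted lattice points can buy you, via strings or a double string) will have the two dual triangles sharing at least one pair of parallel edges---both contain a vertical and a diagonal edge in the standard string configurations. Hence whatever polytope you build this way is never of family~21, independent of position. Your width-$\ge 2$ observation on the $x=1$ triangle is correct but not decisive, since it is not preserved under shears fixing the floor direction; the parallel-edge invariant is what makes the argument clean.
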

\begin{proof} We consider the polytope families separately.
\begin{itemize}
    \item Family 8: 
    A lattice path inducing a subdivision that contains a triangle with an interior point in one slice must skip at least 3 lattice points. This contradicts our point conditions.
    \item Family 14: We make a case distinction by the direction of the edge of length 2. Due to the points in Mikhalkin position, the edge can only be either vertical, horizontal or diagonal. Other directions are not possible, since they would require leaving out more points to make the alignment possible. Since we need an alignment between two floors (this is due to the fact that the polytope has a triangle in one slice and an edge in the other), and since we have to leave out at least one lattice point to obtain the edge of length 2, we  have only two options: Either the triangle with the edge of length 2 is missing another lattice point from the path, or we can use only  tools from the one-nodal case, i.e., right and left string, for the alignment.
    
 If the edge of length 2 is horizontal, it has to be contained in the boundary, so it is dual to a vertical weight 2 end. 
  In the slice $x=1$, the edge of the polytope has direction $(b,a)$ with $0<b<a$ with $gcd(a,b)=1$. This only fits between two columns if $b=1$.
 We can apply IUA-equivalences to obtain %
    $$\begin{pmatrix}
    0 & 0 & 0 & 0 & 1 & 1\\
    y-2 & y-1 & y-1 & y & 0& 1 \\
    0 & 0 & 1 & 0 &  a & 0
    \end{pmatrix}.$$
    This polytope can be contained in $d\Delta_3$ with $a+1\leq d-1$ and it does not immediately contradict our point conditions, since we can draw a lattice path leaving out the point $(0,y-1,0)$. If $y=d$, it is possible to draw a lattice path that leaves out $(0,d-1,0)$ and $(0,d,0).$
    In both cases, the vertex of the curve from which the vertical weight 2 end starts, will be below the line on which the points from the point conditions are distributed. That means that if the triangle has three points in the lattice path, an alignment can only be possible via a left string. However, considering the IUA-representative of the polytope above, we see that this is not possible for $1=b<a$. 
    If the triangle only has two points in the lattice path, it must be in the position $y=d$.
    In this case, the left out points in the path lead to a $Y-$formed string whose vertex needs to be aligned with the edge dual to the $x=1$ slice of the polytope above. 
    Due to the fact that the vertex that would be movable is below the line on which the points from the conditions are distributed, while the edge it has to align with would be relatively close to that line but with $y$-coordinate much larger, this is impossible.
    
    If the edge of length 2 is diagonal, it has to be dual to a diagonal end of weight 2. 
    If the triangle that contains the edge of length 2 has two edges in the lattice path, i.e., the midpoint of the edge of length two is the only lattice point from the triangle left out of the lattice path, the vertex  in the floor plan from which the diagonal end of weight 2 starts is fixed and lies above the line on which the points from the point conditions are distributed.
    This implies that an alignment is only possible via a right string. However, the end of the right string that could align with the vertex is the diagonal end, which thus has the same direction as the end of weight 2. This leads to a polytope of family 12 in the subdivision.
   It follows that the triangle needs to be the right tip of $d\Delta_3$. 
    We  apply IUA-equivalences to obtain 
     $$\begin{pmatrix}
    0 & 0 & 0 & 0 & 1 & 1\\
    d-2 & d-1 & d-1 & d & 0 & b\\
    2 & 0 & 1 & 0 & a+b & 0
    \end{pmatrix}.$$
    This implies $b=1.$ Since we have already determined that the lattice path needs to leave out two points in the triangle, we can move the vertex dual to the triangle in $(2,1)$-direction. As the vertex dual to teh triangle in the floor plan is above the line, on which the points from the point conditions are distributed, it can never be aligned with any of the bounded dual edges in the $x=1$ slice that we obtain for $b=1$, $a\geq4$ in the polytope above. 
    The argument is analogous to the case before. 
    
    If the edge of length 2 is vertical, it is dual to a horizontal edge of weight 2 in the floor plan. This edge can be bounded or unbounded depending on whether the edge is in the boundary of the Newton polytope. Applying IUA-equivalence we obtain two representatives
        $$\begin{pmatrix}
    0 & 0 & 0 & 0 & 1 & 1\\
    0 & 0 & 0 & 1 & 0 & a\\
    z-2&z-1&z & 0 & az-(a+b) & 0
    \end{pmatrix} \text{ or } \begin{pmatrix}
    0 &0 & 0 & 0 & 1 & 1\\
    0 & 1 & 1 & 1 & 0 & a\\
    z & 0 & 1 & 2 & b+a(d-1) & 0
    \end{pmatrix},$$
    where $z \leq d$.
    In both cases the edge in the  $x=1$-slice cannot be made to fit between two columns, so this case is not possible.
    \item Family 21: Polytopes of family 21 have no parallel edges. With just two lattice points that can be left out, it is impossible to fit two triangles into adjacent floors such that they have no parallel edges and such that the point conditions allow an alignment of their dual vertices. As long as the induced subdivision on the level of the floors is column-wise this follows directly. Leaving out only two lattice points we cannot fit a triangle in a floor such that it does not have either a vertical or a diagonal edge. In the adjacent floor the alignment needs to be done with a string or double string, depending on how many lattice points we can still leave out. However, the triangle dual to the vertex that aligns in a (double) string always has a diagonal and vertical edge. Thus, even if the point conditions allow an alignment, we always have parallel edges, so the emerging polytope will not belong to family 21.
\end{itemize}
\end{proof}

\begin{proposition}
\label{prop:poly10}
Let $\Gamma$ be a binodal surface of degree $d$ passing through points in Mikhalkin position.
Then, the dual subdivision to $\Gamma$ can only contain a polytope of family 10 if $d\geq 5.$ 
\end{proposition}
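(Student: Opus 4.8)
The plan is to follow the same strategy as in the proof of Proposition~\ref{prop:541}, adapting it to family~10. Recall that a polytope of family~10 has the form
\begin{equation*}
\begin{pmatrix}
0&0&0&0&1&1\\
0&0&1&1&0&a\\
0&1&0&1&0&b
\end{pmatrix}, \qquad \gcd(a,b)=1,\ 0< b\leq a,
\end{equation*}
and by Proposition~\ref{prop:eliminatepolytopes} and the subsequent dimension count it is only binodal when $a\geq 3$. Since we must realize such a polytope in the dual subdivision of a degree~$d$ surface through points in Mikhalkin position, and since (by Proposition~\ref{prop:fan} together with the width~1 assumption) it can only appear with the trivial subdivision, the corresponding lattice path must run along its edges and therefore can leave out at most two lattice points of $d\Delta_3$. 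First I would read off from the edge-vertex combinatorics of family~10 which faces lie in adjacent floors: the four points with first coordinate $0$ form a unit square (a parallelogram), and the two points with first coordinate $1$ sit in the next floor, so realizing this polytope requires aligning a $4$-valent vertex of one floor curve (dual to the parallelogram) with a configuration in the adjacent floor contributing the edge from $(1,0,0)$ to $(1,a,b)$.

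Next I would analyze, exactly as in the family~14 and family~21 cases, what this alignment forces. The key quantitative input is that the edge in the $x=1$ slice has direction $(a,b)$ with $0<b\leq a$, $\gcd(a,b)=1$. As in the family~14 argument, such an edge can only fit between two consecutive integer columns of a floor when $b=1$; so immediately $b=1$ and $a\geq 3$. With $b=1$, I would then apply IUA-equivalences to put the polytope into a normal form sitting inside $d\Delta_3$, and count: the parallelogram occupies one floor, the edge occupies the next, and fitting the whole configuration inside $d\Delta_3$ while leaving a viable lattice path requires enough room in the $y$-direction. Concretely, the edge from $(1,y_0,z_0)$ to $(1,y_0+a,z_0+1)$ must fit inside the triangular slice $\{x=1\}\cap d\Delta_3$, which has lattice width roughly $d-1$; combined with the need to also accommodate the square in the adjacent floor and still draw a connected lattice path through all but two of the lattice points, this forces $a+1\leq d-1$, i.e. $d\geq a+2\geq 5$. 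I would also check that the alignment is actually achievable with the string/double-string tools available when only two points are omitted — this is where one invokes the double right string of Definition~\ref{def:doublerightstring}, and one verifies (as in Proposition~\ref{prop:541}) that the movable vertex dual to the square can be positioned relative to the line carrying the point conditions so that the alignment with the edge dual to the $x=1$ slice is possible precisely when $d\geq 5$.

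The main obstacle I anticipate is the case-checking of the geometric feasibility of the alignment: one must rule out the small cases ($d=3,4$) not merely by a lattice-width count but by confirming that no lattice path and no choice of string alignment can produce the polytope, and conversely exhibit, for each $d\geq 5$, an explicit floor plan realizing family~10 — this is the analogue of the detailed position arguments in the proof of Proposition~\ref{prop:541} about whether the movable vertex lies above or below the line $L=\{\lambda(1,\eta,\eta^2)\}$. The bookkeeping of which two lattice points of $d\Delta_3$ are omitted, and whether the resulting (possibly disconnected) lattice path is admissible, will require invoking Lemma~\ref{lem:disconnected42} and the remark following it (which already handles the $3+3$ repositioning of family~10). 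I would organize the argument as: (i) reduce to $b=1$ via the column-fitting constraint; (ii) derive $a+1\leq d-1$ from containment in $d\Delta_3$ plus path admissibility, giving $d\geq a+2\geq 5$; (iii) for $d=3,4$ conclude non-realizability; (iv) for $d\geq 5$ exhibit the realizing floor plan, as in the explicit example of Section~\ref{subsec:degdsurface} where family~10 polytopes with $a=3,4$ and $b=1$ appear inside a degree-$d$-like subdivision.
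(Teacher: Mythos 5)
Your outline misses one of the two essentially different ways a family~10 polytope can sit inside a floor-decomposed subdivision, and it happens to be the case that actually governs the bound $d\geq 5$. You fix the coordinate presentation in which the four points with $x=0$ form the unit square and the two points with $x=1$ form the sloped edge, and you analyze only that $4+2$ distribution. But under IUA-equivalence the parallelogram face of the polytope need not lie in a single slice: it can span two adjacent floors, with the lattice points distributed $3+3$ (a triangle in each slice, the parallelogram formed by a pair of aligned vertical edges). This is exactly the positioning noted in the remark after Lemma~\ref{lem:disconnected42}, which you cite only for the lattice-path admissibility question, not for the containment analysis. The paper's proof treats this as a separate case: the alignment there is forced to come from a double right string (after ruling out alignments of two partially free vertices), the resulting normal form places the triangle $(1,y,a),(1,y,a+1),(1,y+1,0)$ in a floor, and $a\geq 3$ gives $d\geq 5$. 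The $4+2$ case you do treat in fact forces $d\geq 6$ once the point conditions are taken into account (the parallelogram must have a pair of vertical edges and the alignment must come from a single right string, not a double right string as you suggest). Without the $3+3$ case, your argument does not rule out that a degree-$4$ surface could contain family~10 in that other positioning, so the proposition is not established.

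A secondary weakness: your bound $a+1\leq d-1$ for the $4+2$ case is a bare lattice-width count applied to one chosen normal form. Since IUA shears acting within a slice change the apparent $y$-extent of the edge, and since the Mikhalkin point conditions (the lattice path may omit only two points, and the movable vertex must reach the aligning cell) are what actually pin down the admissible normal forms, the containment bound has to be derived from those constraints rather than from the raw coordinates of one representative. You flag this as anticipated case-checking but do not carry it out; in the paper this is where the case splits into left-string versus right-string alignments, the left string is excluded, and the two surviving normal forms both yield $d\geq 6$. Finally, note that exhibiting realizing floor plans for every $d\geq 5$ is not needed for the proposition, which is only a necessary condition on $d$.
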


\begin{proof}
For $d<3$ a surface cannot be binodal. Let $P$ be a polytope of polytope family 10.

One slice of $P$ is a sloped edge from the point $(1,0,0)$ to the point $(1,a,b)$, where $a\geq b>0$. For any IUA-equivalence of this polytope to appear in a floor decomposed surface we need $b = 1$. 
There are two cases how the polytope can be contained in $d\Delta_3$:
the parallelogram from $P$ is either contained in a floor or between two floors.

If the parallelogram is contained between two floors, it comes from the alignment of two vertices of the curves of the two adjacent floors, which each have an edge of the same direction that align when the vertices align.
Moreover, when investigating the combinatorics of $P$, we observe that the two triangles in the two floors that form the polytope have only one pair of parallel edges and this is the pair that forms the parallelogram on one side.
The two edges have to be vertical, since the triangles dual to the aligning vertices are of minimal lattice volume, and we only obtain such triangles without a vertical edge if the lattice path leaves out a point on the diagonal boundary. But then, we do not have enough degrees of freedom left to obtain a binodal polytope of family 10.  

We can align two vertices of adjacent floors either by fixing one vertex and leaving the other vertex 2 degrees of freedom in a way that it can be moved to align with the first vertex, or by leaving one degree of freedom to both vertices.  If we leave out one lattice point in each of the two aligning triangles, at least one of the vertices will not be a string and thus, can only have limited area of movement as the surrounding vertices of the curve are fixed by the point conditions. The alignment of a left and right string does yield a polytope of family 9, which is not binodal.
Leaving two degrees of freedom to one vertex such that it can be moved to align with the other vertex is only possible with a double right string.
So, the IUA-equivalence representative of the polytope is
$$\begin{pmatrix}
0 & 0 & 0 & 1 & 1 & 1\\
d-1 & d-1 & d & y & y & y+1 \\
0 & 1 & 0 & a & a+1& 0
\end{pmatrix}, $$
where $d$ is the degree of the lower floor 
and $y$ is a parameter for the column in the higher floor. The generic version of the IUA-representative can be found in Table \ref{tab:Multiplicites}. Figure \ref{fig:poly10quintic1} shows this for $a=3$, $y=0$ and $d=5$. Since a polytope of family 10 with $b=1$ is only binodal if $a\geq3$, we can conclude $d\geq 5.$  
\begin{figure}[h]
    \centering
    \begin{subfigure}{0.35\textwidth}
        \centering
        \includegraphics[width=1\textwidth]{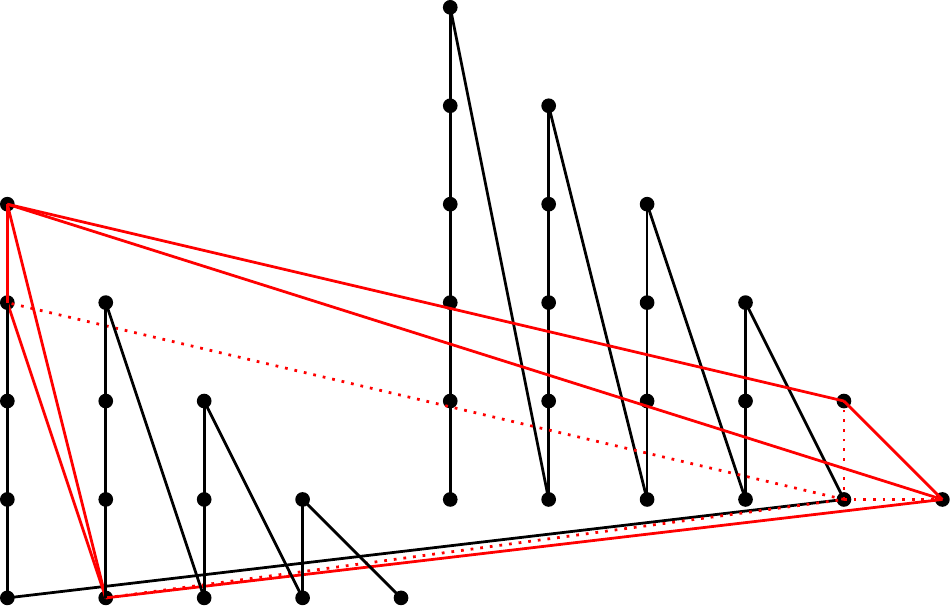}
    \end{subfigure}
    \begin{subfigure}{0.62\textwidth}
        \centering
        \includegraphics[width=1\textwidth]{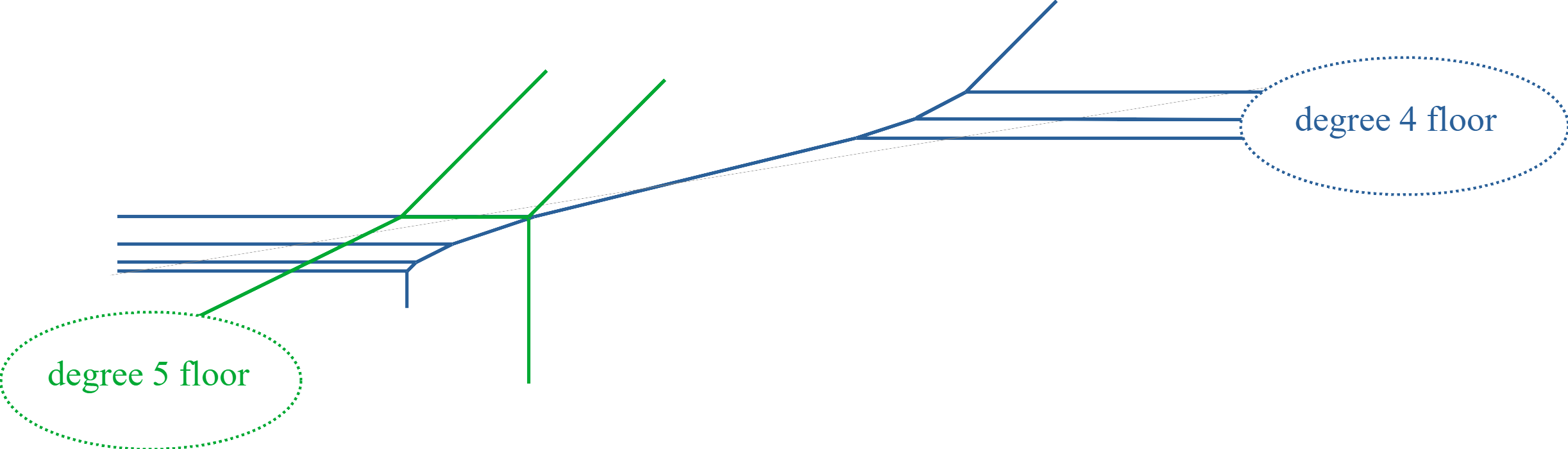}
    \end{subfigure}
    \caption{Polytope of family 10 in $5\Delta_3$}
    \label{fig:poly10quintic1}
\end{figure}

If the parallelogram is contained in one of the floors, it comes from the alignment of an edge of the other floor with $4$-valent vertex dual to the parallelogram. The parallelogram arises from a left out lattice point and the folding out of a corner. This means that we have one more lattice point that we can leave out to make the alignment possible. It follows  that the alignment has to happen via a left or right string, since an additional left out point in the parallelogram does not give enough freedom to the vertex to move the relatively far way towards the next floor. 
Hence, the edge in $P$ at $x=1$ level needs to be transformed to either the diagonal edge of the right string or the vertical edge of the left string. For the left string, we know that the edge opposite the parallelogram is vertical. Thus, due to the point conditions, the parallelogram must have either a pair of horizontal or a pair of diagonal edges. We obtain the following IUA-representative
$$\begin{pmatrix}
 0 & 0& 0 & 0 &  1 & 1 \\
  0 &1 &a  & a+1  & 0& 0\\
 1 & 1& 0 &  0 &  0 & 1
\end{pmatrix} \text{ or } \begin{pmatrix}
  0 & 0 &0 &0 &  1 & 1 \\
  0 & 1 &a  &a+1 &  0& 0\\
  2+a &a+1& 1 &0 &  0 & 1
\end{pmatrix}. $$
In both cases the parallelogram is in a position that cannot be contained in the subdivision with our choice of point conditions. 

It follows that the alignment must be due to a right string. Because of the point conditions and the fact that the edges of the parallelogram cannot be parallel to the edge dual to the diagonal end of the right string, it follows that the parallelogram has a pair of vertical parallel edges. Such a parallelogram appears from the folding out of an edge when a lattice point at the top or bottom of a column is left out.
There are two ways to leave out a lattice point in the lattice path to make a parallelogram appear. 
So, there are two IUA-equivalence representatives of the polytope:
$$
\begin{pmatrix}
0 & 0 & 1 & 1 & 1 & 1\\
d-1 & d & y & y & y+1 & y+1\\
 1 & 0 & d-2 & d-1 & d-a-3 & d-a-2
 \end{pmatrix} \text{ or }
\begin{pmatrix} 
0 & 0 & 1 & 1 & 1 & 1\\
d-1 & d & y & y & y+1 & y+1\\
1 & 0 & a+1 & a+2 & 0 & 1
\end{pmatrix}, $$
 where $y\in\mathbb{N}_0$ is a parameter for the column that contains the parallelogram. The generic representatives can be found in Table \ref{tab:Multiplicites}.
 
Since the polytopes from family 10 with $b=1$ are only binodal for $a\geq 3$, it follows that $d\geq 6$.
 Figure \ref{fig:poly10quintic2} shows this for $a=3$, $y=0$ and $d=6$.

\begin{figure}[h]
    \centering
    \begin{subfigure}{0.38\textwidth}
        \centering
        \includegraphics[width=0.8\textwidth]{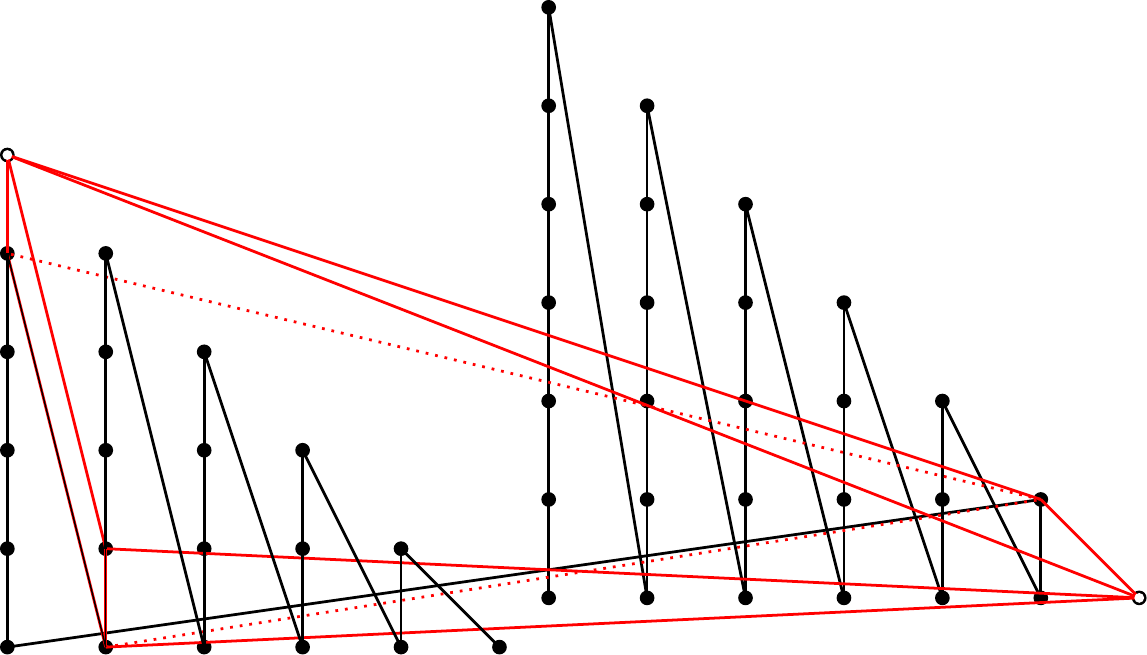}
    \end{subfigure}
    \begin{subfigure}{0.6\textwidth}
        \centering
        \includegraphics[width=1\textwidth]{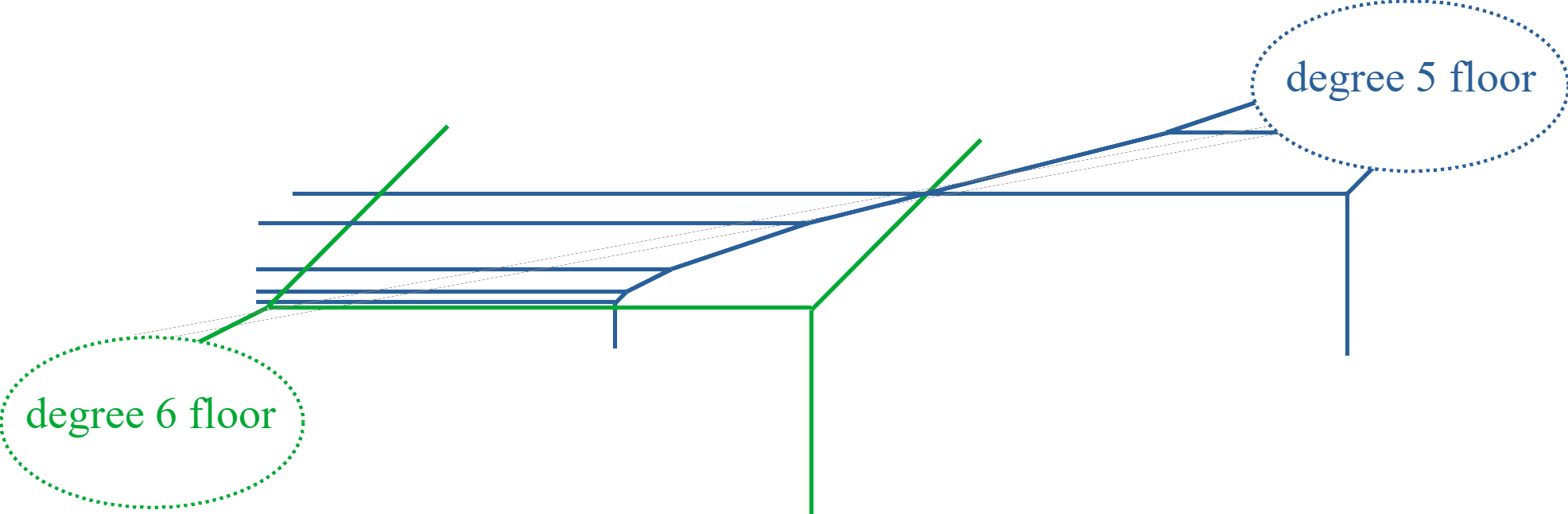}
    \end{subfigure}
    \caption{Polytope of family 10 in $6\Delta_3$. The lattice path can also follow the upper part of the parallelogram instead of the lower part, passing through $(1,0,5)$ and leaving out $(1,1,0)$.}
    \label{fig:poly10quintic2}
\end{figure}
\end{proof}

\begin{proposition}\label{prop:poly13}
Let $\Gamma$ be a binodal surface of degree $d$ passing through points in Mikhalkin position.
Then the dual subdivision to $\Gamma$ can only contain a polytope of family 13 if $d\geq 7.$
In this case the edge of length 2 has to be a vertical edge of the subdivision.
\end{proposition}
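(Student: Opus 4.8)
The plan is to follow the template of Propositions~\ref{prop:poly10} and~\ref{prop:541}: normalize the shape of the polytope, run a case analysis on the direction of the edge of length~$2$ inside the slice of $d\Delta_3$ that contains it, eliminate every direction but the vertical one, and read off the bound $d\geq 7$ from the surviving configuration. Recall that a polytope of family~$13$ has, in one slice, the triangle $T=\conv\{(0,0,0),(0,2,0),(0,1,1)\}$ carrying the lattice point $(0,1,0)$ on its edge of length~$2$, and, in the parallel slice, the single primitive sloped edge from $(1,0,0)$ to $(1,a,b)$ with $\gcd(a,b)=1$ and $0<b<a$. The first step is to reduce to $b=1$: a lattice path of a surface through points in Mikhalkin position leaves out at most two lattice points, and the sloped edge and the edge of length~$2$ are moved by the \emph{same} $GL_2(\mathbb{Z})$-transformation of the coordinates transverse to the floor direction. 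So the slope $(a,b)$ can be realized only if, after this transformation, the sloped edge spans a single column, and a short computation shows that this is compatible with $T$ fitting inside a slice precisely when $b=1$; the binodality criterion recorded in Table~\ref{tab:bigtab1} then forces $a\geq 4$.

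Because the points lie in Mikhalkin position, the edge of length~$2$ of $T$ can only be \emph{vertical}, \emph{horizontal} or \emph{diagonal} inside its slice, exactly as in the analysis for family~$14$ in the proof of Proposition~\ref{prop:541}; any other direction would require the path to skip more than two lattice points in order to realize the edge together with the alignment that produces the second node. I would eliminate the horizontal and diagonal cases next. If the edge of length~$2$ is horizontal it must lie on the boundary of $d\Delta_3$, hence is dual to a vertical weight-$2$ end of the floor curve; but then the normalization of the first step forces the sloped edge in the adjacent slice to have $y$-span $a\geq 4$, which cannot be accommodated between two consecutive columns without the path omitting more than two lattice points. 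If the edge of length~$2$ is diagonal it is dual to a diagonal weight-$2$ end, and, copying the diagonal sub-case of the argument for family~$14$, the alignment that would produce a cell of family~$13$ either makes the weight-$2$ end parallel to the sloped end, so the cell is a polytope of family~$12$ rather than~$13$, or again forces the sloped edge to cross more than one column. Hence the edge of length~$2$ must be vertical.

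It remains to treat the vertical case and extract the bound. There the same $GL_2(\mathbb{Z})$-transformation turns the sloped edge into one of $y$-span exactly~$1$ --- this is the point at which family~$13$ behaves differently from family~$14$, whose vertical case instead forces $y$-span~$a$ and is therefore also impossible --- so the sloped edge fits between two columns, and the vertex alignment producing the second node can be realized only by attaching a double right string (Definition~\ref{def:doublerightstring}) to the column carrying that edge, this being the only way to give a vertex two degrees of freedom, just as in the ``between two floors'' case for family~$10$. Writing out the resulting IUA-representative inside $d\Delta_3$, with a parameter $y\geq 0$ for the column of the sloped edge and $T$ placed in the adjacent floor (the generic representative is the one listed in Table~\ref{tab:Multiplicites}), the requirement that the cell together with the unfixed horizontal edge and the two ends of the double right string fit inside $d\Delta_3$ simplifies to $d\geq a+3$; with $a\geq 4$ this gives $d\geq 7$, and for $d=7$, $a=4$ one checks directly that such a configuration exists, so the bound is sharp. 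I expect the main obstacle to be exactly this elimination together with the final fitting computation: ruling out the horizontal and diagonal placements requires, for each candidate position of $T$ and of the sloped edge, carefully accounting for which lattice points of $d\Delta_3$ the induced lattice path must omit and comparing against the budget of two, and explicitly excluding the degenerations to other polytope families; and in the vertical case one must pin down the exact IUA-representative and confirm that the double right string alignment really is admissible with respect to points in Mikhalkin position, in the spirit of the appendix computations behind Lemma~\ref{lem:nodiscon20}. These are finite but delicate checks.
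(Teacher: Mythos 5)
Your overall strategy matches the paper's: a case analysis on the direction of the length-$2$ edge (vertical, horizontal, diagonal), elimination of the horizontal and diagonal cases by arguments parallel to the family-$14$ analysis in Proposition~\ref{prop:541}, and extraction of $d\geq a+3\geq 7$ from an explicit IUA-representative in the vertical case. The horizontal and diagonal eliminations are essentially right, as is the final numerical bound. But the mechanism you propose for the vertical case is wrong. You claim the aligning vertex needs \emph{two} degrees of freedom and hence must come from a double right string as in the ``between two floors'' case of family~$10$. That analogy fails here: for family~$10$ the parallelogram arises from aligning two vertices of adjacent floors and no lattice point is spent on a weight-$2$ edge, so both omitted lattice points can be used to create the double right string. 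For family~$13$ one of the two omitted lattice points is already consumed by the midpoint of the vertical edge of length~$2$; a double right string would require two further omissions in the adjacent floor, exceeding the budget. The correct mechanism (and the one recorded in Table~\ref{tab:Multiplicites} and in case (10) of the multiplicity definition) is a single \emph{right string} in the adjacent floor, whose diagonal end aligns with the \emph{fixed} vertex dual to the triangle carrying the weight-$2$ vertical edge; one degree of freedom suffices because a moving ray can be made to pass through a fixed point. A left string is excluded because its aligning end is parallel to the weight-$2$ edge and would produce a polytope of family~$11$.

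There is a second, smaller gap: before concluding that the alignment must come from the adjacent floor at all, one has to rule out the configuration in which the lattice path meets only two lattice points of the triangle, i.e.\ the length-$2$ edge lies on the boundary of $d\Delta_3$ (at the top or bottom of a column). In that situation the vertex dual to the triangle itself acquires a limited freedom of movement, and one must check — using its position relative to the line carrying the point conditions and the admissible direction of motion — that no alignment with an adjacent floor is possible. Your write-up skips this sub-case entirely, so even after repairing the string mechanism the argument would not yet establish that all three vertices of the triangle lie on the path.
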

\begin{proof} 
 We make a case distinction by the direction of the edge of length 2. Due to the points in Mikhalkin position, the edge can only be either vertical, horizontal or diagonal; see the argument for the polytope family 14 in the proof of Proposition \ref{prop:541}. 
 If the edge of length 2 is horizontal, it has to be contained in the boundary, so it is dual to a vertical weight 2 end. In the slice $x=1$, the edge of the polytope has direction $(a,b)$ with $0<b<a$ with $gcd(a,b)=1$. This never fits between two columns.
 
    If the edge of length 2 is diagonal, it has to be dual to a diagonal end of weight 2. If the triangle that contains the edge of length 2 has two edges in the lattice path, i.e., the midpoint of the edge of length two is the only lattice point from the triangle left out of the lattice path, the vertex in the floor plan from which the diagonal end of weight 2 starts is fixed and lies above the line on which the points from the point conditions are distributed. The same argument as for polytope family 14 in the proof of Proposition \ref{prop:541} applies, and it follows that this case is not possible.
    Thus, the triangle needs to be the right tip of $d\Delta_3$.  
    We  apply IUA-equivalences to obtain $$\begin{pmatrix}
    0 & 0 & 0 & 0 & 1 & 1\\
    d-2 & d-1 & d-1 & d & 0 & a\\
    2 & 0 & 1 & 0 & a+b & 0
    \end{pmatrix}.$$ 
    Due to the point conditions, the edge in the $x=1$ slice can never be part of the subdivision, so this case is not possible. 
    
    If the edge of length 2 is vertical, it is dual to a horizontal edge of weight 2 in the floor plan. This edge can be bounded or unbounded depending on whether the edge is in the boundary of the Newton polytope. Considering the polytope we observe that the $x=1$ floor can only be contained between two columns if $b=1$. Not being contained between two columns would contradict the point conditions.
    
    We first exclude the case that the lattice path contains only two lattice points from the triangle that contains the edge of length 2. In this case the edge of length 2 is at the boundary of $d\Delta_3$. Since it is a vertical edge, this means it has $y$-coordinate zero. It follows that the edge either has $z$ coordinates $0,1,2$ (we say it is at the bottom) or the maximal values possible in the corresponding floor (we say it is at the top). 
     If it is at the top, the vertex dual to the triangle is above the line containing the points from the conditions. Moreover, we can only move the vertex along the direction of the edge dual to the bottom edge of the triangle and the movement is restricted due to the surrounding point conditions. 
     This does not allow enough freedom to align the vertex with an edge of an adjacent floor.
    If the edge of length two is at the bottom, the vertex dual to the triangle is below the line containing the points from the conditions. Moreover, we can only move the vertex along the direction $(-2,-1)$. Since the slope of the line containing the points from the point conditions is very small and the points are distributed with growing distances it is not possible to align the vertex with an edge of the adjacent floors.
    
    It follows that the lattice path contains all three vertices of the triangle containing the edge of length 2. This means that an alignment is only possible via a right or left string. For a left string the end aligning with the triangle has the same direction as the weight 2 edge and this leads to a polytope from family 11, which is not binodal. It follows that the alignment happens via a right string. 
    
    With this alignment, the relative positions of 4 vertices are fixed, and after applying IUA-equivalence we obtain two representatives for the containment in $d\Delta_3$: 
    $$\begin{pmatrix}
    1 & 1 & 1 & 1 & 0 & 0\\
    1 & 0 & 0 & 0 & d-1 & d\\
0&a&a+1 & a+2 & 1 & 0
    \end{pmatrix} \text{ or } 
    \begin{pmatrix}
    1 & 1 & 1 & 1 & 0 & 0\\
    0 & 1 & 1 & 1 & d-1 & d\\
d-1 & d-a-3 & d-a-2 & d-a-1 & 1 & 0
    \end{pmatrix},
  $$
    where $a$ is the parameter of the polytope family. When $b=1$, this is only binodal for $4\leq a$. 
    The positions above are exemplary: the same containments can be achieved for different slices ($x$-value) and different columns ($y$-value), as long as the minimum of the absolute values of the slopes of the non-vertical edges of the volume 2 triangle is at least $4$.
    For both polytopes we conclude that $d\geq 7.$ 
 \begin{figure}
    \centering
    \begin{subfigure}{1\textwidth}
        \centering
   
    \begin{subfigure}{0.42\textwidth}
        \centering
        \includegraphics[width=0.8\textwidth]{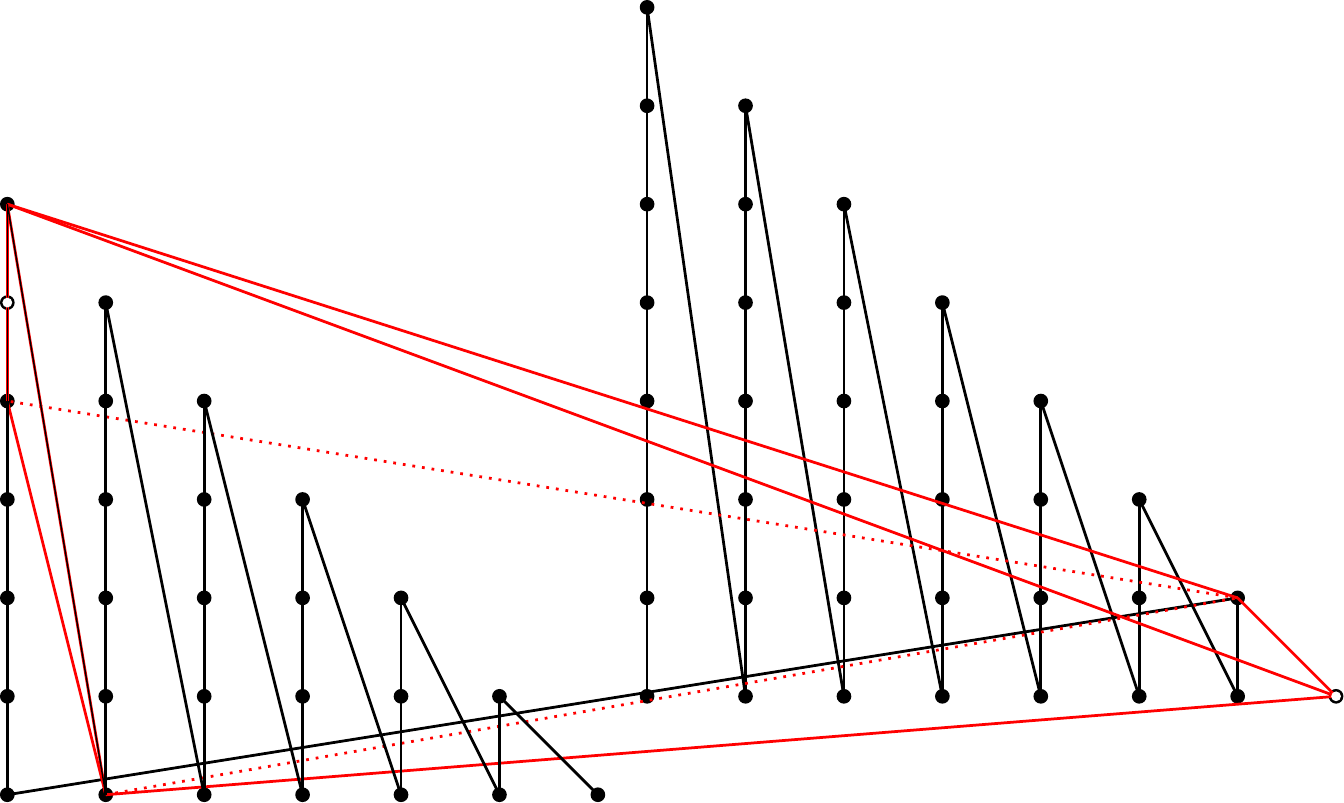}
    \end{subfigure}
    \begin{subfigure}{0.55\textwidth}
        \centering
        \includegraphics[width=1\textwidth]{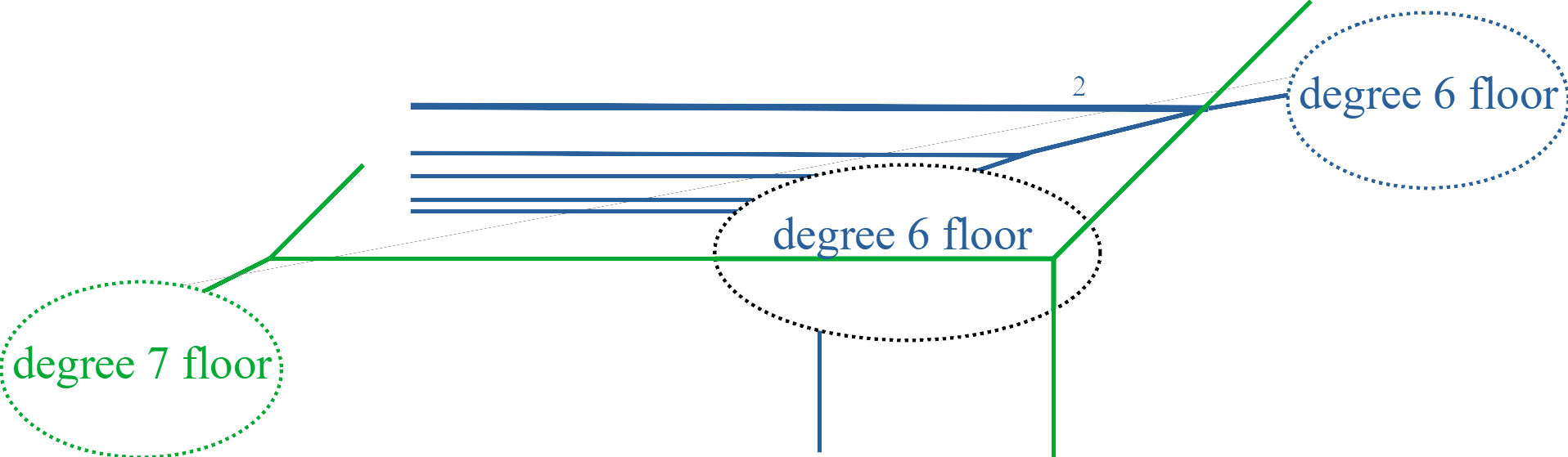}
    \end{subfigure}
    \caption{Polytope of family 13 in $7\Delta_3$ first option}
    \label{fig:poly13septic1}
 \end{subfigure}
    \centering
    \begin{subfigure}{1\textwidth}
    \centering
    \begin{subfigure}{0.42\textwidth}
        \centering
        \includegraphics[width=0.8\textwidth]{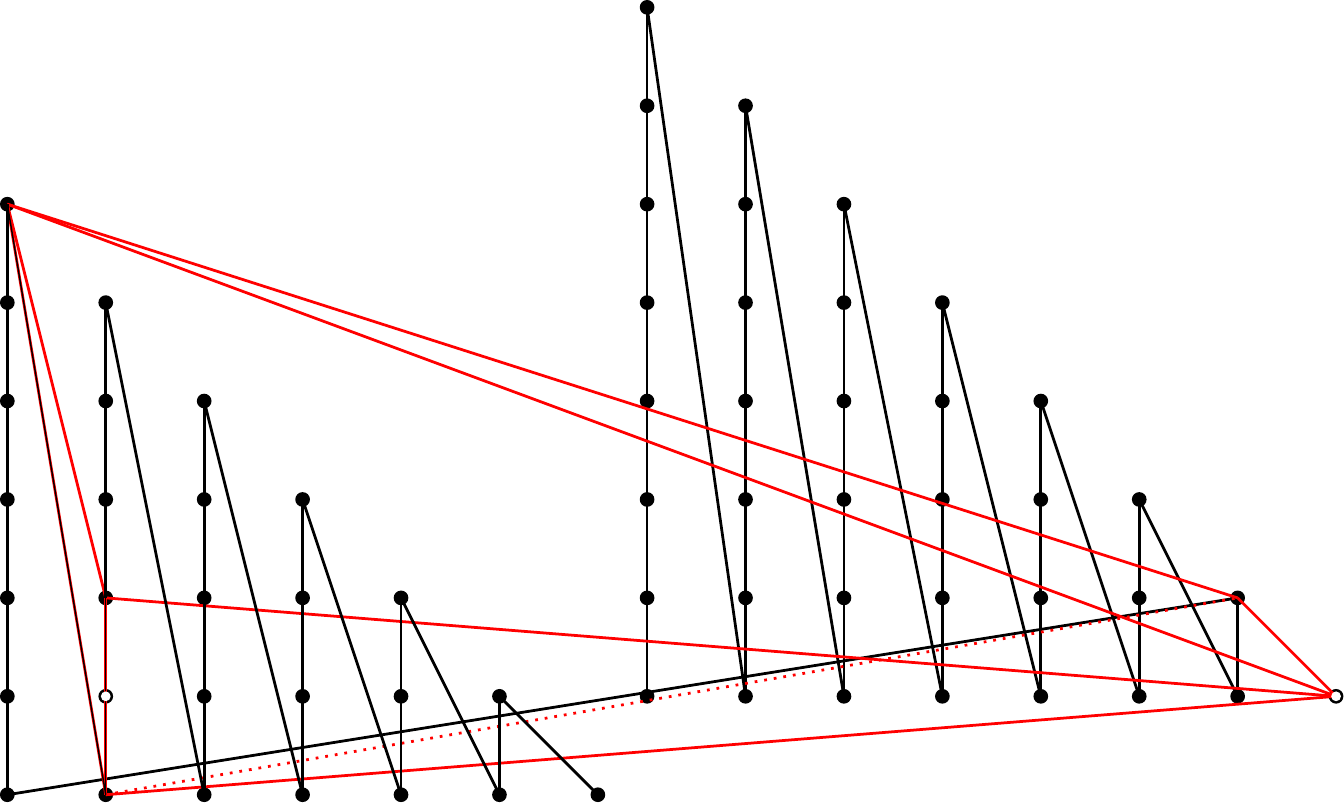}
    \end{subfigure}
    \begin{subfigure}{0.55\textwidth}
        \centering
        \includegraphics[width=1\textwidth]{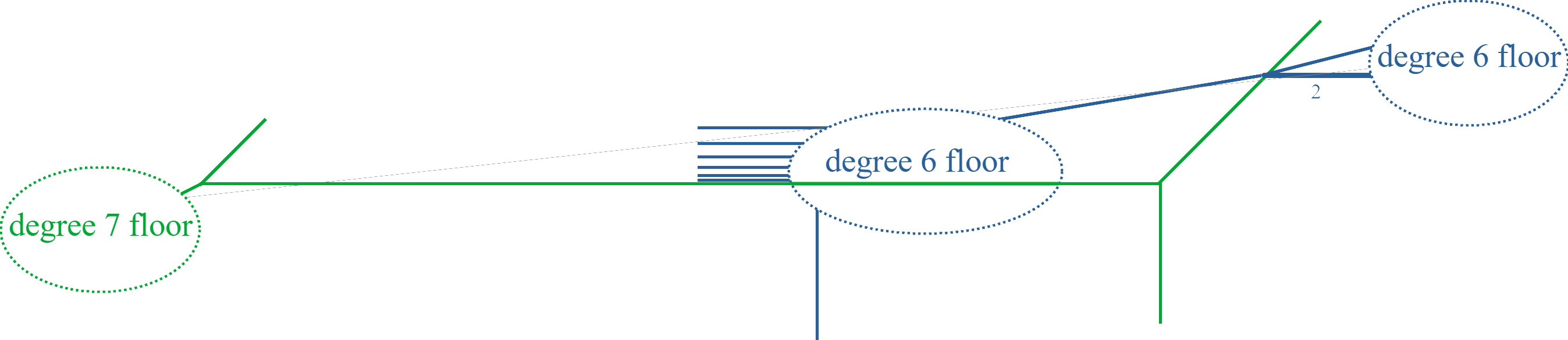}
    \end{subfigure}
    \caption{Polytope of family 13 in $7\Delta_3$ second option}
    \label{fig:poly13septic2}
    \end{subfigure}
    \caption{There are two ways how Polytope $13$ can be contained in $7\Delta_3$.}
\end{figure}
\end{proof}

\begin{proposition}
\label{prop:poly20}
Let $\Gamma$ be a binodal surface of degree $d$ passing through points in Mikhalkin position.
Then, the dual subdivision to $\Gamma$ can only contain a polytope of family 20 if $d\geq 5.$ 
\end{proposition}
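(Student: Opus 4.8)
The plan is to follow the template of Propositions~\ref{prop:poly10} and \ref{prop:poly13}: we determine, up to IUA-equivalence, all the ways a polytope of family~20 can occur as a $3$-dimensional cell in the dual subdivision of a floor decomposed binodal surface of degree $d$ through points in Mikhalkin position, and for each we read off the inequality it imposes on $d$. Since polytopes of family~20 are binodal only for $a\geq 4$ (see Figure~\ref{fig:polys} and Table~\ref{tab:bigtab2}), it is enough to show that every such occurrence forces $a+1\leq d$, for then $d\geq a+1\geq 5$.

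First I would record the combinatorics. Writing the polytope with the floor direction as $x$, both slices $x=0$ and $x=1$ are unimodular triangles sharing the vertical edge in the column $y=1$ (which spans the internal square); unlike families~10 and 13, where one slice is a square, resp.\ a triangle of area~$2$, here each slice is an ordinary $3$-valent vertex of a floor curve. The slice on $(0,1,0),(0,1,1),(0,2,0)$ is generic: its dual vertex has a horizontal, a vertical and a diagonal edge. The slice on $(1,0,a),(1,1,0),(1,1,1)$ is thin: its unique vertical edge is dual to a horizontal edge of the curve, while its two remaining edges, of slopes $-a$ and $-(a-1)$, are dual to two steep edges; in particular, for $a\geq 4$ this ``steep'' vertex is adjacent to a horizontal edge but to no diagonal edge and to no slope-$(-2)$ edge. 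Because the polytope has width~$1$, the floor direction is one of its width-$1$ directions, so this normal form applies after IUA-equivalence, and the cell is the convex hull of the generic triangle in one floor and the steep triangle in the adjacent floor, their dual $3$-valent vertices aligned in the sense of Definition~\ref{def:floorplan}.

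Next comes the alignment analysis, the technical heart. As in the proofs of Propositions~\ref{prop:poly10} and \ref{prop:poly13}, one enumerates how the generic and steep vertices can be forced to align across consecutive floors by a node germ --- a right string, a left string or a double right string (Definition~\ref{def:doublerightstring}) --- while ruling out configurations producing a cell of a different, non-binodal family (such as family~9, cf.\ Proposition~\ref{prop:poly10}) or with more than six lattice points. The combinatorics above already limits the cases: for $a\geq 4$ the steep vertex has no edge of the directions used by the ends of a string or double right string, so it cannot be the vertex of such a germ --- it can only be the vertex that the germ aligns \emph{with}; it cannot be aligned with by a left string, which needs a target not adjacent to a horizontal edge; so it is the target of a right string located in the adjacent floor of next higher degree, and hence lies in a floor of degree at most $d-1$. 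For each surviving configuration I would then write out, parametrized by the slice and column where the cell sits, the IUA-representative of the cell inside $d\Delta_3$, exactly as in the displayed matrices of Propositions~\ref{prop:poly10} and \ref{prop:poly13}; imposing containment in $d\Delta_3$ together with the point conditions --- the vertex dual to the thin triangle lies well above the nearly horizontal line $\{\lambda(1,\eta,\eta^2)\}$, so the alignment needs enough vertical room in that floor --- yields $a+1\leq d$ in each case. With $a\geq 4$ this gives $d\geq 5$, the bound being attained for $a=4$, $d=5$ by a configuration analogous to that of Figure~\ref{fig:poly10quintic1}, with generic representatives as in Table~\ref{tab:Multiplicites}.

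The step I expect to be the main obstacle is exactly this case analysis: enumerating the admissible string and double-right-string alignments, checking in each case against the geometry of points in Mikhalkin position that no smaller $d$ works, and confirming that the resulting cell is IUA-equivalent to family~20 rather than to family~9 or to a larger polytope. This is the same kind of argument that occupies the bulk of Propositions~\ref{prop:541}, \ref{prop:poly10} and \ref{prop:poly13}, and the degree computations for the binodal loci involved are carried out in \texttt{Singular} and \texttt{OSCAR} \cite{singular,oscar,Code21}.
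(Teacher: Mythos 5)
Your overall strategy is the paper's: reduce to the combinatorics of the two triangles, decide which alignment mechanism can merge their dual vertices, write an IUA-representative inside $d\Delta_3$, and read off $d\geq a+1\geq 5$. But the decisive step of your case analysis lands on the wrong germ. You conclude that the steep vertex ``is the target of a right string,'' whereas a single right string cannot produce a polytope of family 20: a right string has only one degree of freedom, so only its diagonal \emph{end} sweeps out a two-dimensional region, and if that end passes through the steep vertex the resulting cell is the convex hull of a diagonal edge (two lattice points) and the steep triangle (three lattice points) --- a pentatope, not a six-point polytope. To obtain family 20 the whole generic triangle must appear in the adjacent slice, i.e.\ the $3$-valent \emph{vertex} dual to it must coincide with the completely fixed steep vertex; that forces the generic vertex to carry two degrees of freedom, which is exactly a \emph{double} right string (Definition \ref{def:doublerightstring}). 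This is precisely why the paper introduces that notion, and it is the configuration behind the representative in Table \ref{tab:Multiplicites} and the inequality $d\geq a+1$.

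Two further points you skip. The paper first argues that the unique pair of parallel edges of the polytope (the square face) must be vertical and must lie in two \emph{adjacent} slices, since otherwise the polytope would have width $2$ in the floor direction; you assert your normal form ``after IUA-equivalence'' without checking that the floor direction is the $x$-direction of that representative (family 20 admits more than one width-$1$ direction). You must also exclude the remaining mechanism in which \emph{both} aligning vertices keep one degree of freedom each; the paper does so by observing that only a left string meeting a right string has enough mobility, and that configuration produces a polytope with two pairs of parallel edges rather than family 20. Finally, justifying the exclusion of the left string by citing the alignment rules of the generalized floor plan definition is circular, since that definition is distilled from Propositions \ref{prop:poly10}--\ref{prop:poly20}; the exclusions have to be argued geometrically from the Mikhalkin point conditions, as the paper does.
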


 \begin{proof}
For $d<3$ a surface cannot be binodal. Let $P$ be a polytope of family 20.

 Only one pair of the edges of $P$ is parallel. Since we assume that the subdivision is sliced, it follows that the two parallel edges have to be in different, though adjacent floors. If they were in the same floor, the other two lattice points of the polytope would have to be distributed on the two adjacent floors. The polytope would then have width 2 in the $x$-direction. 
 
 Since $P$ has no parallelogram as a face, the polytope comes from aligning two vertices of adjacent floors. The two parallel edges have to be vertical, since  
 it is not possible to make two triangles in adjacent floors that have exactly one pair of parallel edges that are not vertical and such that their dual vertices can be aligned in the floor plan.
 We can align two vertices of adjacent floors either by  fixing one vertex entirely while the other has 2 degrees of freedom such that it can be aligned with the fixed vertex, or by leaving one degree of freedom to each vertex such that both can be moved to enable the alignment.
The first approach gives a double right string.
This leads us to the following IUA-equivalence representative of $P$ for the containment in $d\Delta_3$:
$$\begin{pmatrix}
0 & 0 & 0 & 1 & 1 & 1 \\
d-1 & d-1 & d & y & y+1 & y+1\\
0 & 1 & 0 & d-y-1 & d-a-1 & d-a
\end{pmatrix}, $$
where
 $a$ is the parameter of the polytope family and $y$ is the parameter the position of the second triangle in the $y$-coordinate. The generic representatives can be found in Table \ref{tab:Multiplicites}.
 Since polytopes of family  20 are only binodal for $a\geq 4$, it follows that $d\geq 5.$
Figure \ref{fig:poly20quintic} depicts this containment and the dual floor plan alignment for $d=5,y=z=0$. 

If we leave out one lattice point in each of the two triangles, at least one of the vertices will only have a limited area for movement, since the surrounding vertices are fixed by the point conditions. This is only not true for left and right strings, but an alignment of a left with a right string does not induce a polytope of family 20.  
\end{proof}

\begin{figure}
    \centering
    \begin{subfigure}{0.35\textwidth}
        \centering
        \includegraphics[width=1\textwidth]{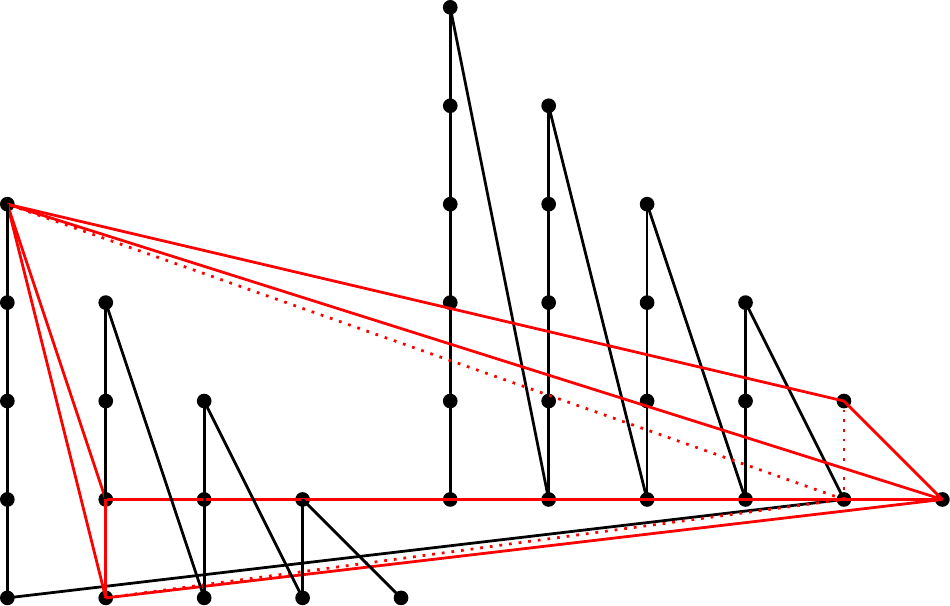}
    \end{subfigure}
    \begin{subfigure}{0.62\textwidth}
        \centering
        \includegraphics[width=1\textwidth]{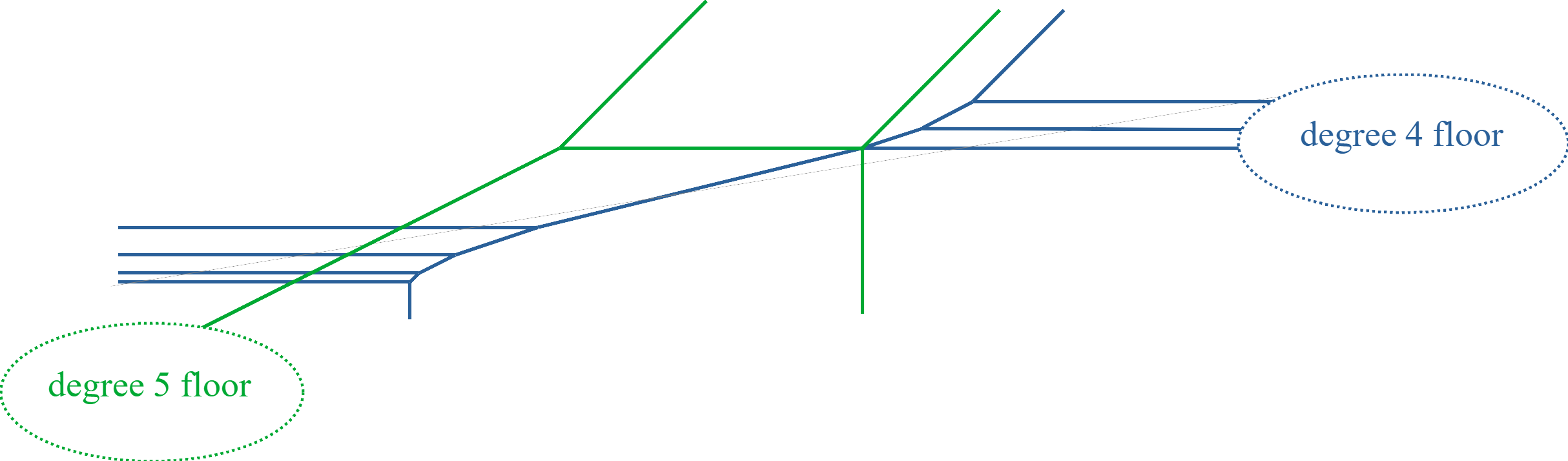}
    \end{subfigure}
    \caption{Polytope of family 20 in $5\Delta_3$}
    \label{fig:poly20quintic}
\end{figure}

\begin{theorem}\label{thm:binodal}
Let $X$ be a binodal surface of degree $d$ passing through points in Mikhalkin position. The two nodes of the surface $X$ can only tropicalize to a vertex dual to a polytope with 6 lattice points if $d >4.$ In this case the two nodes are located at the vertex dual to a polytope of family 10, 13 or 20, see Figure~\ref{fig:polys}.
\end{theorem}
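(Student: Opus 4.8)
The plan is to assemble the theorem directly from the propositions established in this section. By Lemma~\ref{lem:6latticepoints}, a binodal polytope has at least six lattice points, so any polytope with exactly six lattice points is the smallest that can carry two nodes; by Proposition~\ref{prop:fan}, when such a polytope is the Newton polytope of a binodal surface $S$, the tropicalization $\trop(S)$ is a fan and both nodes tropicalize to the vertex of the fan. Thus the two nodes of $X$ tropicalize to a single vertex $w$ of $\trop(X)$ dual to a six-point polytope $P$ sitting inside a subdivision of $d\Delta_3$. Since $X$ passes through points in Mikhalkin position, $\trop(X)$ is floor decomposed, so $P$ must have width~$1$ (see the Remark following Lemma~\ref{lem:6latticepoints}); hence $P$ is IUA-equivalent to one of the families in Table~\ref{tab:allpolytopes}.

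Next I would invoke the classification results of this subsection to eliminate all families except $10$, $13$, $20$. Proposition~\ref{prop:eliminatepolytopes} removes families $1$, $9$, $15$, $16$, $17$ as never binodal. Proposition~\ref{prop:541} removes families $8$, $14$, $21$: no such polytope can appear in the dual subdivision of a degree-$d$ binodal surface through points in Mikhalkin position. The remaining candidates are exactly families $10$, $13$, $14$ — but $14$ is already excluded — together with $20$ and $21$, and $21$ is excluded; so only $10$, $13$, $20$ survive. This gives the second sentence of the theorem.

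Finally, for the degree bound $d>4$, I would combine Propositions~\ref{prop:poly10}, \ref{prop:poly13}, and \ref{prop:poly20}: family $10$ forces $d\geq 5$, family $13$ forces $d\geq 7$, and family $20$ forces $d\geq 5$. Taking the minimum over the three surviving families shows that if the two nodes of $X$ tropicalize to a vertex dual to a six-point polytope, then necessarily $d\geq 5$, i.e.\ $d>4$. Conversely, the explicit containments exhibited in the proofs of those propositions (e.g.\ Figures~\ref{fig:poly10quintic1} and~\ref{fig:poly20quintic}) show that $d=5$ is genuinely attained, so the bound is sharp.

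The proof itself is short, since it is an orchestration of the preceding propositions; the real content — and the place where the main obstacle lies — is in those propositions, in particular in the exhaustive case analysis of Propositions~\ref{prop:541}, \ref{prop:poly10}, \ref{prop:poly13}, and~\ref{prop:poly20}, where one must track how each six-point width-$1$ polytope can be embedded in $d\Delta_3$ compatibly with a lattice path coming from points in Mikhalkin position, ruling out embeddings via the available string/double-string alignment moves and the positional constraints forced by the direction vector $v=(1,\eta,\eta^2)$.
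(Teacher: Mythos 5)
Your proposal is correct and follows essentially the same route as the paper: the paper's own proof is simply ``This follows from Propositions~\ref{prop:poly10}, \ref{prop:poly13} and \ref{prop:poly20},'' and your write-up fills in the same logical skeleton (Lemma~\ref{lem:6latticepoints}, Proposition~\ref{prop:fan}, the width-$1$ reduction under the paper's floor-decomposition assumption, the eliminations of Propositions~\ref{prop:eliminatepolytopes} and~\ref{prop:541}, and the degree bounds $d\geq 5$, $d\geq 7$, $d\geq 5$ for families $10$, $13$, $20$). The only caveat worth flagging is that floor decomposition of multinodal surfaces through points in Mikhalkin position is an assumption the paper makes explicitly, not a proven fact, so it should be stated as such rather than asserted.
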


\begin{proof}
This follows from Propositions \ref{prop:poly10}, \ref{prop:poly13} and \ref{prop:poly20}.
\end{proof}
\begin{conjecture}\label{conj:mult}
Consider a surface of degree $d$ passing through points in Mikhalkin position such that the dual subdivision contains one of the binodal polytopes from one of the families 10, 13, or 20, and only unimodular simplices everywhere else. Then the multiplicity is given in Table \ref{tab:Multiplicites}.
\begin{table}[h]
    \centering
    \begin{tabular}{p{1.4cm}|p{13 cm}|p{1.5cm}}
    Polytope &  \centering{Position and lattice path} &  Mult.\\
        No. 10\newline $a\geq 3$, \newline $b=1$, \newline$d\geq 5$ & 
        \begin{minipage}{\linewidth}
        \begin{center}
         $\begin{pmatrix}
        A & B & C & D & E & F\\
        d-(f+1) & d-(f+1) & d-(f+1) & d-f & d-f & d-f\\
f & f & f+1 & y & y & y+1 \\
0 & 1 & 0 & a & a+1& 0
        \end{pmatrix}$ \\ 
        Path: $A-D-E-F$ 
        \vspace{-0.4 in}
        \end{center}
        \end{minipage}
        &   $a-2$\\
        \hline
          No. 10\newline $a\geq 3$, \newline $b=1$,\newline $d\geq 6$ & 
                  \begin{minipage}{\linewidth}
        \begin{center}
          $\begin{pmatrix} A & B & C & D & E & F\\
d-(f+1) & d-(f+1) & d-f & d-f & d-f & d-f\\
f & f+1 & y & y & y+1 & y+1\\
1 & 0 & f-y-1 & f-y & f-y-a-2 & f-y-a-1
\end{pmatrix} $ \\
 Path: $A-C-E-F$
    \vspace{-0.4 in}
        \end{center}
        \end{minipage}
& $a-2$\\
         \hline
         No. 10 \newline$a\geq 3$,\newline $b=1$,\newline $d\geq 6$ & 
             \begin{minipage}{\linewidth}
        \begin{center}  
         $\begin{pmatrix} A & B & C & D & E & F\\
d-(f+1) & d-(f+1) & d-f & d-f & d-f & d-f\\
f & f+1 & y & y & y+1 & y+1\\
1 & 0 & a+1 & a+2 & 0 & 1
\end{pmatrix} $\\
Path: $A-C-D-F$
   \vspace{-0.4 in}
        \end{center}
        \end{minipage}
& $a-2$\\
         \hline
        No. 13 \newline$a\geq 4$,\newline $b=1$,\newline $d\geq 7$ & 
                     \begin{minipage}{\linewidth}
        \begin{center}  
        $\begin{pmatrix} A & B & C & D & E & F\\
    d-(f+1) & d-(f+1)   & d-f & d-f & d-f & d-f\\
    f & f+1 & y & y & y &y+1 \\
1 & 0 &a&a+1 & a+2 & 0 
    \end{pmatrix}  $\\
    Path: $A-C-E-F$
       \vspace{-0.4 in}
        \end{center}
        \end{minipage}
    &  $a-2$ \newline if $a$ even. \newline $a-3$ \newline if $a$ odd.\\
    \hline 
      No. 13 \newline$a\geq 4$,\newline $b=1$,\newline  $d\geq 7$ &
  \begin{minipage}{\linewidth}
        \begin{center}
    $\begin{pmatrix} A & B & C & D & E & F\\
   d-(f+1) & d-(f+1) & d-f  & d-f  & d-f  & d-f  \\
 f& f+1 & y & y+1 & y+1 & y+1 \\
  1 & 0 &f-y & f-y-a-2 & f-y-a-1 & f-y-a 
    \end{pmatrix}
  $ \\ 
  Path: $A-C-D-F$
         \vspace{-0.4 in}
        \end{center}
        \end{minipage}
  &  $a-2$ \newline if $a$ even. \newline $a-3$ \newline if $a$ odd.\\
         \hline
        No. 20\newline $a\geq 4$, \newline $d\geq 5$ & 
  \begin{minipage}{\linewidth}
        \begin{center}
$\begin{pmatrix} A & B & C & D & E & F\\
d-(f+1) & d-(f+1) & d-(f+1) & d-f & d-f & d-f \\
f & f & f+1 & y & y+1 & y+1\\
0 & 1 & 0 & f-y & f-y-a & f-y-a+1
\end{pmatrix} $\\ 
Path: $A-D-E-F$
        \end{center}
        \end{minipage}
&  $a-3$\\
    \end{tabular}
    \caption{The multiplicities of the binodal polytopes appearing in degree $d$ surfaces through points in Mikhalkin position. The parameters are to be read like this: $f$ is the degree of the floor containing the main part of the polytope, $y$ is the value that can be obtained in the $y$-coordinate and $a$ is the parameter of the polytope family.}
    \label{tab:Multiplicites}
\end{table}
\end{conjecture}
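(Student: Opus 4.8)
The plan is to reduce the multiplicity of such a degree $d$ surface to a single local contribution coming from the binodal cell, and then to match that contribution to one of the lattice-path multiplicities already tabulated in Tables \ref{tab:bigtab1} and \ref{tab:bigtab2}. By Theorem \ref{thm:binodal} (via Propositions \ref{prop:poly10}, \ref{prop:poly13} and \ref{prop:poly20}) the binodal cell can only be a polytope of family $10$, $13$ or $20$, sitting inside $d\Delta_3$ in one of the explicit IUA-positions found there, and in particular with $b=1$. First I would invoke the correspondence theorem for surfaces through points in Mikhalkin position \cite{MaMaSh18}: the number of complex surfaces tropicalizing to a fixed tropical surface $S$ — equivalently, the multiplicity of the lattice path defining $S$ — is a product of local contributions indexed by the maximal cells of the dual subdivision, and a unimodular simplex contributes the factor $1$. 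Hence the multiplicity of our surface equals the local multiplicity attached to the binodal cell together with the restriction of the degree-$d$ lattice path to that cell. Since the points are in Mikhalkin position, the coefficients indexed by lattice points of $d\Delta_3$ outside the binodal polytope are generic from the point of view of the polytope, so this local multiplicity is computed by exactly the recipe used to fill Tables \ref{tab:bigtab1} and \ref{tab:bigtab2}: fix the coefficients on the lattice points of the polytope lying on the restricted path to generic values, leave free the coefficients on the (two) omitted points, and take the degree of the resulting zero-dimensional slice of the binodal variety. By Proposition \ref{prop:fan} the restricted path runs along the edges of the polytope, so it is one of the finitely many paths listed there.

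Next I would carry out the matching family by family, reading the induced path directly off the matrices in Table \ref{tab:Multiplicites}. For family $20$ the single embedding induces the path $A-D-E-F$, and every connected lattice path for family $20$ in Table \ref{tab:bigtab2} has multiplicity $a-3$, giving the last row of Table \ref{tab:Multiplicites}. For family $13$ the two embeddings induce the paths $A-C-E-F$ and $A-C-D-F$; after the IUA-equivalence bringing the polytope to the normal form of Figure \ref{tab:allpolytopes} these correspond to the rows of Table \ref{tab:bigtab1} of multiplicity $a+b-4+\kappa$ and $b(a+b-4+\kappa)$, both of which reduce at $b=1$ to $a-3+\kappa$ with $\kappa = (a+1)\bmod 2$, i.e. to $a-2$ for $a$ even and $a-3$ for $a$ odd, as claimed. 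For family $10$ the three embeddings induce $A-D-E-F$, $A-C-E-F$ and $A-C-D-F$, and here one must verify that each of these is among the three lattice paths of multiplicity $a-2$ in Table \ref{tab:bigtab1} rather than the exceptional one of multiplicity $a(a-2)$; this is a purely combinatorial check, distinguishing the four paths by which pair of the six lattice points is omitted, equivalently by which two coefficients remain free after the generic substitution dictated by the columns of Table \ref{tab:Multiplicites}.

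The main obstacle is that Tables \ref{tab:bigtab1} and \ref{tab:bigtab2} are themselves conjectural: the degree of the binodal variety of each family, and the individual lattice-path multiplicities, are polynomials in the parameter $a$ (with $b=1$) that have so far only been verified computationally for small values, so an unconditional proof of Conjecture \ref{conj:mult} must first promote those extrapolated formulas to theorems. Establishing, for the one-parameter family $\Gamma = \Gamma(a)$, that the degree of the generic zero-dimensional slice equals the stated polynomial amounts to an elimination-theoretic statement uniform in $a$ — for instance, that a suitable Gr\"obner basis of the ideal defining the binodal variety has combinatorial type independent of $a$, or a computation of the relevant intersection number on a compactification of the space of surfaces with Newton polytope $\Gamma(a)$ — and this is where the real difficulty lies. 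A second, more structural point to be made rigorous is the localisation invoked in the first paragraph: that the local multiplicity of the binodal cell embedded in $d\Delta_3$ coincides with the standalone degree, which requires controlling the relevant component of the discriminant as the surface degenerates to its tropical limit, in the spirit of the patchworking arguments of \cite{MaMaSh18, MaMaShSh19}. Granting these two points — the first being the reason the statement is phrased as a conjecture — the multiplicities in Table \ref{tab:Multiplicites} follow.
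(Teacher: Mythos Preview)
The paper does not prove this statement. Conjecture~\ref{conj:mult} is explicitly left open: the paper records only that the claimed multiplicities have been verified computationally for small values of the parameter $a$ (family~$13$ for $4\le a\le 10$, family~$20$ for $a\le 7$, family~$10$ for $3\le a\le 7$), and states in its list of open questions that ``at this time, we do not know how to prove Conjecture~\ref{conj:mult}'' short of symbolic Gr\"obner-basis computations with parameters in the exponents.

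Your proposal is therefore not a proof to be compared against the paper's, but an outline of what a proof would require, and as such it is accurate and honest about its own gaps. The reduction you describe --- factoring the global multiplicity as a product over maximal cells, with unimodular simplices contributing~$1$, so that everything localises to the single binodal cell --- is exactly the principle the paper uses implicitly in its worked example (the polytope $P$ containing a family-$10$ piece), where the multiplicity is read off from Table~\ref{tab:bigtab1} after identifying the IUA-class. Your matching of the induced lattice paths to rows of Tables~\ref{tab:bigtab1} and~\ref{tab:bigtab2} is also arithmetically correct: for family~$13$ with $b=1$ the two relevant rows both specialise to $a-3+\kappa$ with $\kappa=(a+1)\bmod 2$, and for family~$20$ all four connected paths have multiplicity $a-3$; for family~$10$ the residual check you flag (that none of the three embeddings lands on the exceptional path of multiplicity $a(a-2)$) is indeed purely combinatorial.

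You correctly name the two genuine obstacles: first, that Tables~\ref{tab:bigtab1} and~\ref{tab:bigtab2} are themselves conjectural extrapolations from finitely many computed values, so your reduction only trades one conjecture for another; and second, that the localisation step --- equating the embedded multiplicity with the standalone degree of the binodal slice --- needs a patchworking argument in the style of \cite{MaMaSh18,MaMaShSh19} that the paper does not supply. The first obstacle is precisely why the statement is a conjecture, and the paper offers no route around it.
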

Conjecture \ref{conj:mult} is verified for polytope family 13 when $4 \leq a \leq 10$, for family 20 when $a \leq 7$, and for family 10 when $3 \leq a \leq 7$.
\medskip

We use these results to extend the definition of tropical floor plans. For that purpose we adapt some of the definitions from \cite{MaMaShSh19} to define floor plans that enumerate tropical surfaces with a binodal polytope in their dual subdivision. First, we note some differences from the original definition.

We will count a double right string as two node germs to  assure that two node germs in the tropical floor plan correspond to two nodes in the associated surface. 

Unlike the original definition of node germs by \cite{MaMaShSh19}, we let a bounded edge of length two be a node germ, instead of just its midpoint. 
We do this because for polytopes from family 13, the edge of weight 2 in the tropical floor plan contributes to the two nodes via one of its vertices, not via its midpoint.

\begin{definition}[Generalised from \cite{MaMaShSh19,BG20}]\label{def:floorplan-new}
		Let $Q_i$ be the projection of $q_i$ along the $x$-axis. A \emph{$\delta$-nodal floor plan $F$ of degree $d$} is a tuple $(C_d,\ldots,C_1)$ of plane tropical curves $C_i$ of degree $i$ together with a choice of indices $d\geq i_{\delta'}>\ldots>  i_1\geq1$ each assigned a natural number $k_j$  
		such that $\sum_{j=1}^{\delta'}k_j=\delta$, where $0<\delta'\leq\delta$,  satisfying:
	\begin{enumerate}
		\item The curve $C_i$ passes through the following points: (we set $i_0=0, i_{\delta+1}=d+1.$)\begin{align*}
			\text{if } i_{\nu}>i>i_{\nu-1}:\,\,\,\,\,\,\,\, &Q_{\sum_{k=i+1}^d \binom{k+2}{2}-\delta+(\sum_{j: i > i_j} k_j)+1},...,Q_{\sum_{k=i}^d \binom{k+2}{2}-\delta+(\sum_{j: i > i_j} k_j)-1}, \\
			\text{if } i=i_{\nu}:\,\,\,\,\,\,\,\,\,\,\,\, &Q_{\sum_{k=i+1}^d \binom{k+2}{2}-\delta+(\sum_{j: i \geq i_j} k_j)+1},...,Q_{\sum_{k=i}^d \binom{k+2}{2}-\delta+(\sum_{j: i > i_j} k_j)-1}.
		\end{align*}
		\item The plane curves $C_{i_j}$ has $k_j$ node germs for each $j=1,\ldots,\delta',$ where  the double right string counts for two.
		\item If  $C_{i_j}$  contains a left string as a node germ, then its horizontal end aligns either with a horizontal bounded edge of $C_{i_j+1}$ or with a $3$-valent (where edges are counted with multiplicity) vertex of $C_{i+1}$ not adjacent to a horizontal edge.
		\item  If $C_{i_j}$ contains a right string as a node germ, then its diagonal end aligns either
		\begin{itemize} 
		\item with a diagonal bounded edge of $C_{i_j-1}$,
		\item with a $3$-valent (where edges are counted with multiplicity) vertex of $C_{i_j-1}$ which is not adjacent to a diagonal edge,
		\item with a vertex dual to a parallelogram which has two vertical edges and two edges of slope of absolute value at least $4$, or
		\item with a vertex dual to a triangle which consists of one vertical edge of length two and two edges with slope of absolute value at least $4$.
		\end{itemize}
		\item If $C_{i_j}$ contains a double right string, then the vertex adjacent to the diagonal and vertical end of the double right string aligns with a vertex dual to a triangle that is formed of a vertical edge and two edges with slope of absolute value at least $3$.
		\item If $i_{d}=\delta'$, then the node germs of $C_d$ can only be diagonal ends of weight two, a right string, or a double right string.
		\item If $i_1=1$, then the node germ of $C_1$ is a left string.
	\end{enumerate}
\end{definition}

To generalize the definition of the multiplicity of a floor plan, we distinguish between node germs appearing in separated mode and unseparated mode.
\begin{definition}
A node germ appears in \emph{separated} mode if the corresponding singularity is separated. 
Otherwise, we say the node germ appears in \emph{unseparated} mode.
By definition a double right string is always in unseparated mode. All other node germs in unseparated mode are accompanied by a second node germ in unseparated mode with which it interacts, e.g. by alignment. These node germs in unseparated mode with be always given as a pair.
\end{definition}

The definition of separated or unseparated modes of node germs allows us to distinguish between node germs giving rise to binodal polytopes in the dual subdivision (unseparated mode) and node germs giving rise to the polytope complexes described in connection with the circuits in Section \ref{sec:preliminaries} (separated mode).

The following definition is based on Conjecture \ref{conj:mult}.

\begin{definition}[Generalised from Definition 5.4, \cite{MaMaShSh19}]\label{def:multiplicities-new}
Let $F$ be a $\delta$-nodal floor plan of degree $d$. Let $C^{*}_{i_j}$ be a node germ of $C_{i_j}$ in separated mode. Then we
define the following local complex multiplicity $\text{mult}_{\mathbb{C}}(C^{*}_{i_j})$:

\begin{enumerate}
\item If $C^{*}_{i_j}$ is dual to a parallelogram, then $\text{mult}_{\mathbb{C}}(C^{*}_{i_j}) =2$.
\item If $C^{*}_{i_j}$ is the midpoint of an edge of weight two, then $\text{mult}_{\mathbb{C}}(C^{*}_{i_j}) =8$.
\item If  $C^{*}_{i_j}$ is a horizontal end of weight two, then $\text{mult}_{\mathbb{C}}(C^{*}_{i_j}) = 2(i_j + 1)$.
\item  If $C^{*}_{i_j}$ is a diagonal end of weight two, then $\text{mult}_{\mathbb{C}}(C^{*}_{i_j}) = 2(i_j - 1)$.
\item If $C^{*}_{i_j}$ is a left string whose horizontal end aligns with a horizontal bounded edge, then $\text{mult}_{\mathbb{C}}(C^{*}_{i_j}) = 2$.
\item If $C^{*}_{i_j}$ is a left string whose horizontal end aligns with a vertex not adjacent to a horizontal edge, then $\text{mult}_{\mathbb{C}}(C^{*}_{i_j}) = 1$.
\item  If $C^{*}_{i_j}$ is a right string whose diagonal end aligns with a diagonal bounded edge, then $\text{mult}_{\mathbb{C}}(C^{*}_{i_j}) = 2$.
\item If $C^{*}_{i_j}$ is a right string whose diagonal end aligns with a vertex not adjacent to a diagonal edge, then $\text{mult}_{\mathbb{C}}(C^{*}_{i_j})= 1$.

\end{enumerate}
If $(C^{*}_{i_j},C^{*}_{i_j-1})$ is a pair of node germs in unseparated mode, where $C^{*}_{i_j-1}$ is the node germ in $C_{i_j-1}$ associated to $C^{*}_{i_j}$, or $C^{*}_{i_j}$ is a double right string, we assign the following local multiplicities 
\begin{enumerate}
    \item[(9)] If $C^{*}_{i_j}$ is a double right string that aligns with a vertex dual to a triangle that is formed of a vertical edge and two edges with slope $|a|\geq 3$ 
with the vertical edge on the left side of the triangle, it has multiplicity $|a|-2.$ Otherwise it has multiplicity $|a|-3.$

    \item[(10)] If $(C^{*}_{i_j},C^{*}_{i_j-1})$ consists of a right string and an edge of weight 2 dual to the vertical edge of length two of a triangle for which the minimum of the absolute value of the slope of the other two edges is $|a|\geq 4$, the pair has multiplicity  $|a|-2$ if $|a|$ is even, and $|a|-3$ if $|a|$ is odd.
    
    \item[(11)] If $(C^{*}_{i_j},C^{*}_{i_j-1})$ consists of a right string and a parallelogram which has two vertical edges and two edges of slope $|a| \geq 4$, the pair has multiplicity $|a|-2.$

\end{enumerate}

The multiplicity of a $\delta$-nodal floor plan  $F$ is \begin{align*}
    \text{mult}_{\mathbb{C}}(F) = 
\prod_{j=1}^{\delta'}&\prod_{\text{node germs in}\atop \text{separated mode of } C_{i_j}}\text{mult}_{\mathbb{C}}(C^{*}_{i_j}) \\ & \cdot \prod_{\text{double right strings of }C^{*}_{i_j}} \text{mult}_{\mathbb{C}}(C^{*}_{i_j})
\cdot\prod_{\text{node germs in unseparated}\atop \text{ mode of } (C_{i_j},C_{i_j-1}) } \text{mult}_{\mathbb{C}}( (C^{*}_{i_j},C^{*}_{i_j-1})).
\end{align*}

\end{definition}

As final result, we now use these multiplicities to compute the contribution of tropical surfaces with binodal polytopes in their dual subdivision to the count of binodal surfaces of degree $d$.

\begin{theorem}
\label{conj:main}
Assuming Conjecture \ref{conj:mult} is true, it follows that tropical degree $d$ surfaces with a binodal polytope with $6$ vertices and width $1$ in the dual subdivision contribute $\frac{1}{4}d^4+\mathcal{O}(d^3)$ surfaces to the count of binodal degree $d$ surfaces $N_{2,\mathbb{C}}^{\mathbb{P}^3}(d)$. So, they contribute to the third highest term of the polynomial  $N_{2,\mathbb{C}}^{\mathbb{P}^3}(d)$. 
\end{theorem}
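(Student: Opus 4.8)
The plan is to carry out an asymptotic count, for each of the three binodal polytope families 10, 13, and 20, of the number of degree $d$ surfaces through points in Mikhalkin position whose dual subdivision contains that polytope, and then sum these contributions. By Theorem~\ref{thm:binodal}, these three families are the only possibilities, so the sum over them gives the full contribution. The key input is Conjecture~\ref{conj:mult}, which provides both the explicit IUA-representatives of the relevant polytopes inside $d\Delta_3$ (parametrized by the floor degree $f$, the column $y$, and the family parameter $a$) together with the lattice paths and the multiplicities ($a-2$ or, in family~13, $a-2$ or $a-3$ depending on parity of $a$, and $a-3$ in family~20).

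First I would, for each family, identify the ranges of the parameters $(f,y,a)$ for which the listed polytope genuinely fits inside $d\Delta_3$ and admits the stated lattice path. From the representatives in Table~\ref{tab:Multiplicites} one reads off linear inequalities: $f$ ranges roughly over $\{5,\dots,d-1\}$ (respectively $\{6,\dots,d-1\}$, $\{7,\dots,d-1\}$), the parameter $a$ satisfies a bound like $a \leq d - f - c$ for a small constant $c$ (coming from the requirement that the sloped edge of the polytope stays inside the simplex), and $y$ ranges over an interval of length about $f - a$. Then the contribution of each fixed $(f,a)$ is (number of admissible $y$) $\times$ (number of admissible lattice paths for that polytope, which is a constant, $3$ or $4$, read from Table~\ref{tab:Multiplicites}) $\times$ (multiplicity, which is $a-2$ or $a-3$). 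Since the $y$-count is $\sim f - a + O(1)$, and up to parity corrections the multiplicity is $\sim a$, the contribution of family~10 is asymptotically
\begin{equation*}
\sum_{f} \sum_{a} (\text{const}) \cdot (f-a)\cdot a \;+\; \mathcal{O}(d^3),
\end{equation*}
and similarly for families 13 and 20, with the only subtlety being that in family~13 one must split the sum over $a$ into even and odd parts, each contributing $a-2$ or $a-3$; averaging, this is $a - \tfrac{5}{2} + O(1)$, which does not affect the leading term.

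Next I would evaluate the double sums. With $f$ running to $d$ and, for fixed $f$, $a$ running up to $\sim d-f$, one gets a sum of the shape $\sum_{f=0}^{d}\sum_{a=0}^{d-f} a(f-a)$, which by the standard formulas $\sum a \sim n^2/2$, $\sum a^2 \sim n^3/3$ evaluates to a constant times $d^4$; combining the three families with their respective constant numbers of lattice paths and the leading behaviour of the multiplicities, the constants are designed to add up to $\tfrac14$. I would present this computation compactly: replace the sums by integrals, $\int_0^d\!\!\int_0^{d-f} a(f-a)\,da\,df$, note that this equals $c\,d^4$ for an explicit rational $c$, and track the family-dependent prefactors (number of paths, and the leading coefficient $1$ of the multiplicity in $a$) to see that the total leading coefficient is $\tfrac14$. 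The lower-order terms — the $O(1)$ corrections in the $y$-count, the parity correction in family~13's multiplicity, and the linear-in-$a$ shifts $-2$ or $-3$ — all feed only into the $\mathcal{O}(d^3)$ remainder.

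The main obstacle I expect is bookkeeping rather than conceptual: getting the parameter ranges exactly right (in particular the interaction between the upper bound on $a$ and the range of $y$, which depends on whether the polytope sits in a floor or between two floors — family~10 has both a $d\geq5$ and a $d\geq6$ case with different representatives), and making sure no polytope placement is double-counted across the different lattice-path rows of Table~\ref{tab:Multiplicites}. One must also be careful that a polytope placed with floor degree $f$ close to $d$ still leaves room for the simplex to be completed into a genuine floor-decomposed subdivision; but since we only need a leading-order count and the excluded boundary cases contribute $\mathcal{O}(d^3)$, it suffices to show the admissible region is, up to lower order, the triangle $\{0\le a\le d-f\}$. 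Once the region and the summand are pinned down, the final evaluation is a routine polynomial summation yielding $\tfrac14 d^4 + \mathcal{O}(d^3)$.
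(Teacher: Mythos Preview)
Your overall plan---sum over the three families 10, 13, 20, parametrize placements by $(f,y,a)$, weight by the conjectured multiplicities, and evaluate the resulting polynomial sums---is exactly the paper's approach. But one concrete detail is wrong and would derail the computation: the bound you state on $a$ is incorrect. You write that $a\le d-f-c$, but reading off the representatives in Table~\ref{tab:Multiplicites} gives the opposite dependence. For instance, in the first family-10 row the point $(d-f,\,y,\,a+1)$ lies in $d\Delta_3$ iff $(d-f)+y+(a+1)\le d$, i.e.\ $a\le f-y-1$; the sloped edge is constrained by the height of the floor of degree $f$, not by $d-f$. After summing out $y$ (which does contribute a factor $\sim f-a$ as you say), the remaining constraint is $a\lesssim f$, not $a\lesssim d-f$. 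Your integral $\int_0^d\!\int_0^{d-f} a(f-a)\,da\,df$ therefore has the wrong region (and is in fact negative). The correct model integral is
\[
\int_0^d\!\!\int_0^{f} a(f-a)\,da\,df \;=\; \int_0^d \frac{f^3}{6}\,df \;=\; \frac{d^4}{24}
\]
per placement type. A second bookkeeping point: the number of placement types in Table~\ref{tab:Multiplicites} is $3$, $2$, $1$ for families 10, 13, 20 respectively (not ``$3$ or $4$''), giving leading terms $\tfrac{1}{8}d^4$, $\tfrac{1}{12}d^4$, $\tfrac{1}{24}d^4$, which sum to $\tfrac14 d^4$. Once these ranges are fixed the rest of your outline goes through as in the paper.
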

\begin{proof}
From Propositions \ref{prop:541}, \ref{prop:poly10},  \ref{prop:poly13}  and \ref{prop:poly20} we know that the only polytopes that can contribute are from family 10, family 13 and family 20.

For polytopes of family 10 there were two different ways of including the polytope in $d \Delta_3$. We consider them separately.
First, we consider the case that the parallelogram is contained between two floors.
We have to count with multiplicity all the ways how the triangle $(x+1,y,a)$, $(x+1,y,a+1)$, $(x+1,y+1, 0)$ can be contained in $d\Delta_3.$ We know it can only occur in floors $F$ of degree $4\leq deg(F)<d$. For such a floor $F$, there are $deg(F)-3$ many columns that can accommodate the triangle. 
If we fix the left most column of floor $F$, we see that there are $deg(F)-3$ options to fit in the triangle via
$(d-deg(F),0,a)$, $(d-deg(F),0,a+1)$, $(d-deg(F),1, 0)$ with $a\geq 3$. For an arbitrary column that is high enough, we have to fit in
$(d-deg(F),y,a)$, $(d-deg(F),y,a+1)$, $(d-deg(F),y+1, 0)$ ensuring $a\geq 3$. This yields $deg(F)-y-3$ many options. We count each option with multiplicity $a-2$.
This yields a total contribution of 
$$\sum_{f = 4}^{d-1}\sum_{y=0}^{f-4}\sum_{a=3}^{f-y-1}  (a-2) = \frac{d^4}{24}-\frac{5 d^3}{12} + \frac{35 d^2}{24} - \frac{25 d}{12} +1.
$$

The next inclusion option is the parallelogram contained in a floor. We know there are two options to get the parallelogram in the subdivision, depending on which lattice point we leave out. However, these two options are symmetric, and the corresponding paths give the same type of multiplicity. So, we have to count only the case where the left out point is the top point of the parallelogram.

We have to count with multiplicity all the ways how the parallelogram $(x+1,y,d-2)$, $(x+1,y,d-1)$, $(x+1,y+1,d-a-3)$, $(x+1,y+1,d-a-2)$ can be contained in $d\Delta_3.$ We know it can only occur in floors $F$ of degree $5\leq deg(F)<d$. For such a floor $F$, there are $deg(F)-4$ many columns that can accommodate the parallelogram. 
If we fix the left most column of floor $F$, we see that there are $deg(F)-4$ options to fit in the triangle via  $(d-deg(F),0,deg(F)-1)$, $(d-deg(F),0,deg(F))$, $(d-deg(F),1,deg(F)-a-3)$, $(d-deg(F),1,deg(F)-a-2)$ ensuring $a\geq 3$. For an arbitrary column that is high enough we have to fit in
$(d-deg(F),y,deg(F)-1)$, $(d-deg(F),y,deg(F))$, $(d-deg(F),y+1,deg(F)-a-3)$, $(d-deg(F),y+1,deg(F)-a-2)$ ensuring $a\geq 3$. This yields $deg(F)-y-3$ many options. We count each option with multiplicity $a-2$.
This yields a total contribution of 
$$\sum_{f = 5}^{d-1}\sum_{y=1}^{f-4}\sum_{a=3}^{f-y-1}  (a-2) = \frac{d^4}{24}-\frac{7 d^3}{12} + \frac{71 d^2}{24} - \frac{77 d}{12} +5.$$
So for all lattice paths connected to polytopes of family 10 we obtain a contribution of $$\frac{d^4}{8}-\frac{19 d^3}{12} + \frac{59 d^2}{8} - \frac{179 d}{12} +11.$$

For polytopes of family 13 there were two different ways of including the polytope. However, the two cases are symmetric and they both give the same multiplicity, so we only need to count one.
We have to count with multiplicity all the ways how the triangle $(x+1,y,a)$, $(x+1,y,a+1)$, $(x+1,y,a+2)$, $(x+1,y+1,0)$ can be contained in $d\Delta_3.$ We know it can only occur in floors $F$ of degree $6\leq deg(F)<d$. For such a floor $F$, there are $deg(F)-5$ many columns that can accommodate the triangle. 
If we fix the left most column of floor $F$, we see that there are $deg(F)-5$ options to fit in the triangle via  $(d-deg(F),0,a)$, $(d-deg(F),0,a+1)$, $(d-deg(F),0,a+2)$, $(d-deg(F),1,0)$ ensuring $a\geq 4$. For an arbitrary column that is high enough, we have to fit in $(d-deg(F),y,a)$, $(d-deg(F),y,a+1)$, $(d-deg(F),y,a+2)$, $(d-deg(F),y+1,0)$ ensuring $a\geq4$, this yields $deg(F)-y-6$ many options. We count each option with its multiplicity: $a-2$  if $a$ is even, and $a-3$ if $a$ is odd. Writing $a=2k$ resp. $a=2k+1$ we see that in both cases the multiplicity is $2k$.
We obtain the following contribution to our count:
\begin{align*}
		\frac{1}{12}d^4 - \frac{7}{6}d^3 + \frac{17}{3}d^2 - \frac{34}{3}d + 8\hspace{1cm}
		& \text{ if } d \text{ even, }\\
		\frac{1}{12}d^4 - d^3 + \frac{25}{6}d^2 - 7d - \frac{17}{4}
	\hspace{1cm}	& \text{ if } d \text{ odd. }
	\end{align*}

For polytopes of family 20 we have to count with multiplicity all the ways how the triangle $(1,y,d-y-1)$, $(1,y+1,z)$, $(1,y+1, z+1)$ can be contained in $d\Delta_3.$ We know it can only occur in floors $F$ of degree $4\leq deg(F)<d$. For such a floor $F$ there are $deg(F)-3$ many columns that can accommodate the triangle. 
If we fix the left most column of floor $F$, we see that there are $deg(F)-3$ options to fit in the triangle via $(d-deg(F),0,deg(F))$, $(d-deg(F),1,z)$, $(d-deg(F),1, z+1)$ ensuring $a=deg(F)-z\geq 4$. For an arbitrary column that is high enough, we have to fit in $(d-deg(F),y,deg(F)-y)$, $(d-deg(F),y+1,z)$, $(d-deg(F),y+1, z+1)$ ensuring $a=deg(F)-y-z\geq 4$. This yields $f-y-3$ many options.   We count each option with multiplicity $a-3 = deg(F)-y-z-3$.
This yields a total contribution of 
$$\sum_{f = 4}^{d-1}\sum_{y=0}^{f-4}\sum_{z=0}^{f-y-4}  (f-y-z-3) = \frac{d^4}{24}-\frac{5 d^3}{12}+\frac{35 d^2}{24}-\frac{25 d}{12}+1.$$

It follows that for $d>6$ binodal polytopes contribute 
\begin{align*}
			\frac{1}{4}d^4 - 	\frac{19}{6}d^3 + 	\frac{29}{2}d^2 - 	\frac{85}{3}d + 20\hspace{1cm}&\text{ if } d \text{ even, }\\
			 & \\
			\frac{1}{4}d^4 - 	3d^3 + 13d^2 - 	24 d - 	\frac{31}{4}\hspace{1cm}
			&\text{ if } d \text{ odd. }
		\end{align*}
		to the count of binodal degree $d$ surfaces.
\end{proof}

\begin{remark} There are still many open questions connected to the tropical count of multinodal surfaces. We list a few open questions and directions of research.
\begin{enumerate}
\item So far we cannot compute the second order term of $N_{\delta,\mathbb{C}}^{\mathbb{P}^3}(d)$ by tropical methods. The surfaces contributing to this term probably correspond to unseparated nodes encoded in subdivisions of binodal polytopes with more lattice points, or arise from binodal polytopes of larger width, i.e. are non-floor decomposed surfaces.
    \item We are still missing $66$ surfaces from the count of binodal cubic surfaces.
These surfaces must contain binodal polytopes with 7 or more vertices in their dual subdivisions. These polytopes could have width 1 or width greater than 1, in which case we expect them to  be subdivided into width 1 polytopes. 
This question extends to understanding unseparated nodes in more generality.
\item At this time, we do not know how to prove Conjecture \ref{conj:mult}. This would require a technique for symbolic computation with Groebner bases for polynomial ideals with parameters in the exponents, or other methods.
\item In this paper, we do not investigate the contribution of the binodal polytopes determined in this paper with respect to a count over $\mathbb{R}$, but this could be an interesting direction of future research.
\item 
Initial computations indicate that the binodal polytopes studied in this paper are also Newton polytopes of cuspidal surfaces. For these polytopes, the cusp would tropicalize to the vertex of the tropical surface. So, these polytopes could contribute to a count of cuspidal tropical surfaces using methods similar to the ones used in this paper.
\end{enumerate}
We leave these questions for further research.
\end{remark}

\clearpage
\appendix
\section{Details of proofs}\label{appendix}
We include here computational details for some of the proofs of section \ref{sec:binodalpolytopes}.

\subsection{Proof of Proposition \ref{prop:eliminatepolytopes}}\label{app:prop:eliminatepolytopes}

\subsubsection*{Polytope Family 9}
Since polytope family 9 consists only of one polytope, we refer to it as Polytope~9.
The binodal locus of Polytope 9 consists of surfaces with non-isolated singularities, which we can see as follows.
Using \texttt{Singular} \cite{singular} or the \texttt{OSCAR} \cite{oscar} code available at \cite{Code21}, we can compute the polynomials generating the binodal variety in the polynomial ring of coefficients $\mathbb{Q}[a_1..a_6]$.
We obtain the three polynomials
$$
	g_1=a_4a_5-a_2a_6, \ \ \ \ 
	g_2=a_3a_5-a_1a_6, \ \ \ \ 
	g_3=a_2a_3-a_1a_4.
$$
Let $f=a_1+a_2x+a_3y+a_4xy+a_5z+a_6yz$ be a polynomial defining a binodal surface. Then we know that the coefficients are zeroes of the three polynomials $g_1, g_2, g_3$ and the coefficients are all nonzero, so using the above equations, we can write
$$
a_4 f = (a_2+a_4y)(a_3+a_3x+a_6z).
$$
So for any $f$ with Polytope 9 as  Newton polytope, the variety of $f$ contains non-isolated singularities.

\subsubsection*{Polytope Family 16}
The polynomial $p \in \mathbb{C}[a_1,\ldots, a_6][x,y,z]$ describing a generic surface with Newton polytope given by a polytope in family 16 is
$$
   p=a_1+a_2y+a_3y^2+x(a_4+a_5y^bz^a+a_6y^{2b}z^{2a}) .
$$
Its derivatives are given by
\begin{align*}
   \partial p/\partial x &= a_4+a_5y^bz^a+a_6y^{2b}z^{2a},\\
   \partial p/\partial y &= a_2+2a_3y+bxy^{b-1}z^a(a_5+2a_6y^{b}z^{a}),\\
   \partial p/\partial z &= axy^bz^{a-1}(a_5+2a_6y^{b}z^{a}).
\end{align*}
Consider the variety $X$ whose points are singular surfaces with Newton polytope in family 16 (in coordinates $a_1, \ldots, a_6$) together with a singular point on the surface (in coordinates $x,y,z$). The equations above vanish on $X$.

Writing $f=a_1+a_2y+a_3y^2$ and $g=a_4+a_5y^bz^a+a_6y^{2b}z^{2a}$ we can rephrase $p=f+xg$. Since  $\partial p/\partial x =g$, it follows that $f=0$ on the variety $X$.

If $V(p)$ is a binodal surface, we know that $x,y$ and $z$ cannot be zero as follows.
If $y$ or $z$ is zero, this would force some $a_i$ to be zero, so this cannot occur. If $x=0$, then $y = -a_2/2a_3$. Then, the equation for $\partial p / \partial x$ yields $2a$ possibilities for $z$, so there are more than two singular points, a contradtiction.

Setting $\Tilde{z}=y^az^b$ allows us to rewrite $g = a_4+a_5\Tilde{z}+ a_6\Tilde{z}^2.$ Moreover, $\partial g / \partial \Tilde{z} = a_5+2a_6\Tilde{z} = a_5+2a_6y^az^b.$ 
By the polytope family we know that $a>0$. It follows from $\partial p/\partial z=0$ that $a_5+2a_6y^{b}z^{a}=0$, since $x,y,z$ are not zero.
So it follows that $g$ as a univariate polynomial in $\Tilde{z}$ has a double zero, so $a_5^2-4a_4a_6$ vanishes on $X$. Hence, we can write $g = \gamma(y^az^b-\epsilon)^2$ for $\gamma,\epsilon\in\mathbb{C}[a_1,\ldots, a_6]$.

Furthermore, we can conclude from $\partial p / \partial y$ and the above that $\partial f /\partial y= a_2+2a_3y=0$, which means that $f$ as a univariate polynomial in $y$ as a double zero. So $a_2-4a_1a_3$ vanishes on $X$, and we can write $f=\beta (y-\alpha)^2$ with $\alpha,\beta \in \mathbb{C}[a_1,\ldots, a_6].$

Thus, we can rewrite $p$ and its derivatives as
\begin{align*}
	p&= \beta (y-\alpha)^2 + \gamma x(y^az^b-\epsilon)^2,\\
	\partial p /\partial x &= \gamma (y^az^b-\epsilon)^2,\\
	\partial p /\partial y &= 2\beta (y-\alpha)+ 2\gamma a xy^{a-1}z^b (y^az^b-\epsilon),\\
	\partial p /\partial z &= 2\gamma b xy^{a}z^{b-1} (y^az^b-\epsilon).
\end{align*}
It follows that all $(x,y,z)$ satisfying
\begin{align*}
    y^az^b&=\epsilon,\\
    y&=\alpha
\end{align*}
are singularities of $p$. This is an infinite family. Therefore, every polytope in family 16 has non-isolated singularities.

\subsubsection*{Polytope Family 17}
The polynomial $p \in \mathbb{C}[a_1, \ldots, a_6][x,y,z]$ describing a generic surface with Newton polytope given by a polytope in family 17 is
$$
p=a_1+a_2y+a_3z+x(a_4+a_5y^{a}z^{b}+a_6y^{2a}z^{2b}).
$$
Its derivatives are given by
\begin{align*}
   \partial p/\partial x &=a_4+a_5y^{a}z^{b}+a_6y^{2a}z^{2b},\\
   \partial p/\partial y &= a_2+axy^{a-1}z^{b}(a_5+2a_6y^{a}z^{b}),  \\
   \partial p/\partial z &= a_3+bxy^{a}z^{b-1}(a_5+2a_6y^{a}z^{b}). 
\end{align*}

Consider the variety $X$ whose points correspond to singular surfaces with Newton polytope in family 17, together with a singular point on the surface. The above equations vanish on $X$.

Setting $f = a_1+a_2y+a_3z$ and $g= a_4+a_5y^{a}z^{b}+a_6y^{2a}z^{2b}$ we can write $p = f+xg$.
Since $g = \partial p /\partial x,$ we can conclude that $f$ vanishes on $X$. 

By considering $\partial p/\partial y -az \partial p/\partial z$ we can conclude that $y =
\frac{a_3}{a_2}\frac{a}{b}z$ on $X$. We can divide by $a_2$ since we assume all $a_i$ to be non-zero. 
Substituting this into $f$, we can solve for $z$ and obtain $z = -\frac{a_1}{a_3(1+\frac{a}{b})}.$
Thus, we have unique solutions for $y$ and $z$.

Now, $\partial p / \partial y$ and $\partial p /\partial z$ are linear in $x$. Under the assumption that $a_5+2a_6y^{a}z^{b}\neq 0$, we obtain a unique solution for $x$ from both of these equations. Since $V(p)$ should have two singularities, this is a contradiction independent of whether $\partial p / \partial y$ and $\partial p /\partial z$ yield the same solution for $x$.

Thus, $a_5+2a_6y^{a}z^{b} = 0,$ which means that $g$ has a double zero when considered as a univariate polynomial in $w = y^{a}z^{b}$. So $a_5^2=4a_4a_6.$
Moreover, this implies $a_2=a_3=0,$ which cannot occur. Hence, any polytope of family 17 is not binodal.

\subsection{Proof of Lemma \ref{lem:nodiscon20}}\label{app:lem:nodiscon}
Recall, that we use the notation $\cdot^\vee$ to indicate dual objects under the duality between the subdivision of the Newton polytope (which in our case is trivial) and the tropical surface $S$ and the 3-dimensional areas of $\mathbb{R}^3\setminus S$. In particular, for a vertex $V$ of the polytope $V^\vee$ denotes the 3-dimensional area of $\mathbb{R}^3\setminus S$ dual to $V$. 
\subsubsection{Proof of Lemma \ref{lem:nodiscon20} for polytope family 20}\label{app:poly20}

 \begin{proof} We first consider the possible disconnected paths. There can only exist disconnected lattice paths if the line $L$, on which the points from the point conditions are distributed, intersects with the tropical surface more than 3 times, i.e., if the line passes through more than four of the 3-dimensional areas into which the tropical surface subdivides $\mathbb{R}^3.$
 
  For polytopes of family 20, we have the vertices $A=(0,1,0)$, $B=(0,1,1),$ $C=(0,2,0),$ $D=(1,0,a),$ $E=(1,1,0),$ $F =(1,1,1)$. The 3-dimensional areas defined by the tropical surface dual to this kind of polytope with vertex of the surface at $(0,0,0)$ are given by the dual areas to the vertices:
  \begin{align*}
      A^\vee \text{ defined by } &  (-1,0,0), (0,0,-1), (-1,-1,0), (0,-a,-1). \\
      B^\vee \text{ defined by } & 
      (-1,0,0),(-1,-1,0),(-a,1,1).\\
      C^\vee \text{ defined by }  & (-1,0,0),(0,0,-1),(a-2,a-1,1),(-a,1,1),(1,1,0).\\
      D^\vee \text{ defined by }  & (1,0,0),(-1,-1,0),(0,-a,-1),(-a,1,1),(a-2,a-1,1).\\
      E^\vee \text{ defined by }  & (1,0,0),(0,0,-1),(0,-a,-1),(1,1,0). \\
      F^\vee \text{ defined by }  & (1,1,0),(1,0,0),(a-2,a-1,1).
  \end{align*}
  
  Recall, that the line $L$ has direction vector $(1,\eta,\eta^2)$ with $0<\eta\ll 1$.

  We observe that $B^\vee\subset \{z>0\}$ and $E^\vee\subset \{z<0\}$. Thus, it immediately follows that if our line $L$ passes through $B^\vee$ it cannot pass through $E^\vee$ afterwards. 
  If $L$ passes through $A^\vee, C^\vee$ and $D^\vee$, then we know that $L$ passes in $D^\vee$ through $D^\vee\cap\{y>0\}\subset\{z>0\}$,  since  $C^\vee\subset \{y>0\}$. Then the line $L$ cannot pass through $E^\vee\subset\{z<0\}$. So, there are only disconnected lattice paths if $L$ intersects $A^\vee,B^\vee,C^\vee,D^\vee$ and $F^\vee$.

This yields four possibilities. Note, that leaving out the first or last point leads to a connected path. Each of the four points of $L$ intersected with the surface can be chosen to be not marked as a point from the point conditions.

We first consider the cases where one of the middle points is left out.
Let the general points be $(0,0,0)$, $(1,\eta,\eta^2)$, $(\lambda,\lambda\eta,\lambda\eta^2)$ and let $(x,y,z)$ be the vertex of the tropical surface dual to a polytope from family 20 that passes through these 3 points. Suppose the surface passes through the points according to the disconnected lattice path $A-B | C-D-F$. Asserting that the points are contained in the correct cells of the surface, we obtain the following system of 9 equations in 9 unknowns:
\begin{align*}
    \begin{pmatrix}
    0\\0\\0
    \end{pmatrix} &= \begin{pmatrix}
    x\\y\\z
    \end{pmatrix} + \alpha_1\begin{pmatrix}
    -1\\0\\0
    \end{pmatrix}+ \alpha_2\begin{pmatrix}
    -1\\-1\\0
    \end{pmatrix} , \,\, \alpha_1,\alpha_2 >0,\\
    \begin{pmatrix}
    1\\\eta\\\eta^2
    \end{pmatrix} &= \begin{pmatrix}
    x\\y\\z
    \end{pmatrix} + \beta_1\begin{pmatrix}
    2-a\\1\\1
    \end{pmatrix}+ \beta_2\begin{pmatrix}
    a-2\\a-1\\1
    \end{pmatrix}  , \,\,\beta_1,\beta_2 >0,\\
    \begin{pmatrix}
     \lambda\\\lambda\eta\\\lambda\eta^2
    \end{pmatrix} &= \begin{pmatrix}
    x\\y\\z
    \end{pmatrix} + \gamma_1\begin{pmatrix}
    a-2\\a-1\\1
    \end{pmatrix}+ \gamma_2\begin{pmatrix}
    1\\0\\0
    \end{pmatrix} , \,\, \gamma_1,\gamma_2 >0.
\end{align*}
When solving for the positive parameter $\beta_2$, we obtain an upper bound on the value of $\lambda$. This contradicts the genericity of the point conditions. 
For the disconnected lattice path $A-B-C|D-F$ we obtain a similar set of equations and an analogous contradiction.

Now consider the two connected paths where the first or last point are left out. The path $A|B-C-D-F$ is indeed possible as the system of equations for this path is solvable. For the path $A-B-C-D|F$ we obtain the following system of equations:

\begin{align*}
\begin{pmatrix}
0\\0\\0
\end{pmatrix} &= \begin{pmatrix}
x\\y\\z
\end{pmatrix} +\alpha_1\begin{pmatrix}
-1\\0\\0
\end{pmatrix}+\alpha_2\begin{pmatrix}
-1\\-1\\0
\end{pmatrix},\, \text{ with }\alpha_1,\alpha_2 >0,\\
\begin{pmatrix}
1\\\eta\\\eta^2
\end{pmatrix} &= \begin{pmatrix}
x\\y\\z
\end{pmatrix} +\beta_1\begin{pmatrix}
-1\\0\\0
\end{pmatrix}+\beta_2\begin{pmatrix}
2-a\\1\\1
\end{pmatrix},\, \text{ with }\beta_1,\beta_2 >0,\\
\begin{pmatrix}
\lambda\\\lambda\eta\\\lambda\eta^2
\end{pmatrix} &= \begin{pmatrix}
x\\y\\z
\end{pmatrix} +\gamma_1\begin{pmatrix}
2-a\\1\\1
\end{pmatrix}+\gamma_2\begin{pmatrix}
a-2\\a-1\\1
\end{pmatrix},\, \text{ with }\gamma_1,\gamma_2 >0.
\end{align*}
This gives the following solution for $\gamma_1$:
\begin{align*}
\gamma_1&= \lambda\eta^2-\frac{(\eta-\eta^2)(\lambda -1)}{a-2}.
\end{align*}
Then $\gamma_1>0$ if and only if $\lambda+1<a$.
This contradicts the genericity conditions of our points. It follows that $A|B-C-D-F$ is the only possible connected lattice path where the line $L$ has $4$ intersection points with the surface.

It remains to check the connected paths arising from a line $L$ that intersects the surface 3 times. Taking into account that we already know that $L$ cannot pass through both $B^\vee$ and $E^\vee$, we obtain the following possible paths: 
$
A-B-C-F,\ 
A-B-D-F,\ 
A-C-D-F,\ 
A-C-E-F,\ 
A-D-E-F.
$ 
For the paths $A-B-C-F$, $A-B-D-F$, $A-C-E-F$, and $A-D-E-F$ we can set up the equation system the same way as for the disconnected paths. Doing so, we find out that each of these paths is possible.

For the path $A - C - D - F$ we obtain the following equations: 

\begin{align*}
    \begin{pmatrix}
    0\\
    0\\
    0
    \end{pmatrix} &= \begin{pmatrix}
    x \\y\\   z
    \end{pmatrix} + \alpha_1\begin{pmatrix}
    -1\\0\\0
    \end{pmatrix}+ \alpha_2\begin{pmatrix} 0\\0\\-1\end{pmatrix}, \,\, \alpha_1,\alpha_2>0,\\
    \begin{pmatrix}
    1\\
    \eta \\
    \eta^2
    \end{pmatrix} &= \begin{pmatrix}
    x \\
    y\\
    z
    \end{pmatrix} +
    \beta_1\begin{pmatrix}
    2-a\\1\\1
    \end{pmatrix}+ \beta_2\begin{pmatrix} a-2\\a-1\\1\end{pmatrix}, \,\, \beta_1,\beta_2>0,\\
    \\
    \begin{pmatrix}
    \lambda\\
   \lambda \eta \\
    \lambda\eta^2
    \end{pmatrix} &= \begin{pmatrix}
    x \\
    y\\
    z
    \end{pmatrix}+
    \gamma_1\begin{pmatrix} a-2\\a-1\\1\end{pmatrix}+ \gamma_2\begin{pmatrix}
    1\\0\\0
    \end{pmatrix}, \,\, \gamma_1,\gamma_2>0.
\end{align*}
The first equation implies that $y=0$ and $x,z>0.$
The last equation however implies that $z = \lambda\eta(\eta-\frac{1}{a-1}).$ Since $\eta < \frac{1}{a}<\frac{1}{a-1}$, it follows that $z<0,$ a contradiction. So, this path is not possible.

 \end{proof}

\subsubsection{Proof of Lemma \ref{lem:nodiscon20} for polytope family 21}

 \begin{proof} We first consider disconnected paths. 
 These can only exist if the line $L$, on which the points from the point conditions are distributed, intersects with the tropical surface more than 3 times, i.e., if the line passes through more than four of the 3-dimensional areas into which the tropical surface subdivides $\mathbb{R}^3.$
 
  For polytopes of family 21 we have the vertices $A=(0,0,0)$, $B=(0,0,1),$ $C=(0,1,0),$ $D=(1,0,0),$ $E=(1,a,b),$ $F =(1,c,d)$. The 3-dimensional areas defined by the tropical surface dual to such a polytope with vertex of the surface at $(0,0,0)$ are given by the dual areas to the vertices:
  \begin{align*}
      A^\vee \text{ defined by } &  (-1,0,0),(0,-1,0),(0,0,-1).\\
      B^\vee \text{ defined by } & 
      (-1,0,0),(0,-1,0),(1-(d+c),1,1),(c,-d,c). \\
      C^\vee \text{ defined by }  & (-1,0,0),(0,0,-1),(d-b-1,d-b,a-c),(1-(d+c),1,1),(b,b,-a).\\
      D^\vee \text{ defined by }  & (0,-1,0),(0,0,-1),(1,0,0),(b,b,-a),(c,-d,c).\\
      E^\vee \text{ defined by }  & (1,0,0),(b,b,-a),(d-b-1,d-b,a-c). \\
      F^\vee \text{ defined by }  & (c,-d,c),(1-(d+c),1,1),(1,0,0),(d-b-1,d-b,a-c).
  \end{align*}
    Recall, that the line $L$ has direction vector $(1,\eta,\eta^2)$ with $0<\eta\ll 1$.

  We observe that $B^\vee\subset \{z>0\}$ and $E^\vee\subset \{z<0\}$. Thus, it immediately follows that if our line $L$ passes through $B^\vee$ it cannot pass through $E^\vee$ afterwards. Since $B^\vee$ is contained in $\{z>0\}$ and $C^\vee$ is contained in $\{y>0\}$ and $D^\vee\cap\{z>0,y>0\}=\emptyset$, it is not possible for $L$ to pass through $B^\vee$, $C^\vee$ and $D^\vee$.
So, the only possibility for a disconnected path is when the intersections of $L$ are distributed on $A-C-D-E-F$.

This gives rise to four possible lattice paths:
$
     A | C-D-E-F,\
     A-C | D-E-F,\
     A-C-D | E-F\
     A-C-D-E | F,
$
 where the first and last path are connected paths.
As in proof for polytope family 20 in \ref{app:poly20}, we consider an analogous set of equations for each path and investigate these systems for contradictions.

The system of equations we obtain for $A\,|\, C-D-E-F$ does not lead to contradictions, so this path is possible for all polytopes of family $21$.

Consider the lattice path $A-C\,|\,D-E-F$. We obtain the following system of equations:
\begin{align*}
    \begin{pmatrix}
     0\\0\\0
    \end{pmatrix}&=\begin{pmatrix}
    x\\y\\z
    \end{pmatrix}+\alpha_1 \begin{pmatrix}-1\\0\\0\end{pmatrix} +\alpha_2\begin{pmatrix}0\\0\\-1\end{pmatrix} \text{ with } \alpha_1,\alpha_2>0,\\
    \begin{pmatrix}
     1\\ \eta\\ \eta^2
    \end{pmatrix}&=\begin{pmatrix}
    x\\y\\z
    \end{pmatrix}+\beta_1 \begin{pmatrix}1\\0\\0\end{pmatrix} +\beta_2\begin{pmatrix}b\\b\\-a\end{pmatrix} \text{ with } \beta_1,\beta_2>0,\\
  \begin{pmatrix}
     \lambda\\\lambda \eta\\\lambda \eta^2
    \end{pmatrix}&=\begin{pmatrix}
    x\\y\\z
    \end{pmatrix}+\gamma_1 \begin{pmatrix}1\\0\\0\end{pmatrix} +\gamma_2\begin{pmatrix}d-b-1\\d-b\\a-c\end{pmatrix} \text{ with } \gamma_1,\gamma_2>0.
\end{align*}
 The first two conditions lead to $y=0$ and $z=\eta^2+\frac{a\cdot \eta}{b}.$ Inserting this into the last condition we can solve for $\gamma_2= \frac{\lambda\eta}{d-b}$ and obtain another equation for $z$:
\begin{align*}
    z = \lambda(\eta^2+\frac{c-a}{d-b}\eta)
  \,\,  &\Rightarrow\,\, \eta^2+\frac{a\cdot \eta}{b} = \lambda(\eta^2+\frac{c-a}{d-b}\eta)\\
    &\Rightarrow 0=(\lambda -1)\eta^2 + (\lambda\frac{c-a}{d-b}-\frac{a}{b})\eta .
\end{align*}
Since for the parameter condition $ad-bc=1$ and $d+c>a+b$, it follows that $d-b>0$ so we do not divide by zero. For the genericity of our point conditions, it is important that $\eta$ and $\lambda$ can be chosen independently of each other. This is not satisfied by the above equation for all $a,b,c,d$.  
Thus, the path $A-C\,|\,D-E-F$ is not possible for all polytopes in family 21.

The systems of equations that we obtain for $A-C-D\,|\, E-F$ and $A-C-D-E\,|\, F$ are solvable.  
So we conclude, that they are possible for all polytopes of family 21.

Next, we investigate the connected paths that arise from the $L$ intersecting the tropical surface 3 times. 
First we observe that the connected lattice path $A-C-D-F$ is not possible: Since $A^\vee\subset \{x<0,y<0,z<0\}$, $C^\vee\subset\{y>0\}$ and  $D^\vee\subset\{x>0\}$, it follows that $L$ passes through $D^\vee\cap\{x>0,y>0\}.$ To pass from $D^\vee$ directly to $F^\vee$, $L$ has to pass through $\mathbb{R}_{>0}\cdot(1,0,0)+\mathbb{R}_{>0}\cdot(c,-d,c) \subset \{y<0\}$. This is impossible. 
Taking into account that the path has to run along existing edges, we have the following list of remaining possible connected paths: 
$$
    A-B-C-F,\
    A-B-D-F,\
    A-C-E-F,\
    A-D-E-F.
$$

Solving for the equations produced by the paths $A-B-C-F$ and $A-B-D-F$ does not lead to a contradiction, so these paths are possible.

Consider the lattice path $A-C-E-F$. We obtain the following system of equations:
\begin{align*}
    \begin{pmatrix}
  0\\0\\0 
    \end{pmatrix}&= \begin{pmatrix}
    x \\ y\\z
    \end{pmatrix}+\alpha_1\begin{pmatrix}
      -1\\0\\0
    \end{pmatrix}+\alpha_2\begin{pmatrix}
      0\\0\\-1
    \end{pmatrix}\,\text{ with }\alpha_1,\alpha_2>0,\\
    \begin{pmatrix}
  1\\\eta\\\eta^2 
    \end{pmatrix}&= \begin{pmatrix}
    x \\ y\\z
    \end{pmatrix}+\beta_1\begin{pmatrix}
      b\\b\\-a
    \end{pmatrix}+\beta_2\begin{pmatrix}
      d-b-1\\d-b\\a-c
    \end{pmatrix}\,\text{ with }\beta_1,\beta_2>0,\\
    \begin{pmatrix}
  \lambda\\\lambda\eta\\\lambda\eta^2
    \end{pmatrix}&= \begin{pmatrix}
    x \\ y\\z
    \end{pmatrix}+\gamma_1\begin{pmatrix}
      1\\0\\0
    \end{pmatrix}+\gamma_2\begin{pmatrix}
      d-b-1\\d-b\\a-c
    \end{pmatrix}\,\text{ with }\gamma_1,\gamma_2>0.
\end{align*}
Solving for $\beta_2$, shows that $\beta_2>0$ if and only if  $\frac{a}{b}+\eta >\lambda(\frac{c-a}{d-b}+\eta)$. For any choice of $a,b,c,d$ parameters of the polytope family 21,
this implies an upper bound for the value of $\lambda$ which contradicts the generality of our chosen points. So, the path $A-C-E-F$ is not possible for any polytope in family 21.

Consider the path $A-D-E-F$.
We obtain the following system of equations:
\begin{align*}
    \begin{pmatrix}
  0\\0\\0 
    \end{pmatrix}&= \begin{pmatrix}
    x \\ y\\z
    \end{pmatrix}+\alpha_1\begin{pmatrix}
      0\\-1\\0
    \end{pmatrix}+\alpha_2\begin{pmatrix}
      0\\0\\-1
    \end{pmatrix}\,\text{ with }\alpha_1,\alpha_2>0,\\
    \begin{pmatrix}
  1\\\eta\\\eta^2 
    \end{pmatrix}&= \begin{pmatrix}
    x \\ y\\z
    \end{pmatrix}+\beta_1\begin{pmatrix}
      1 \\0\\0
    \end{pmatrix}+\beta_2\begin{pmatrix}
      b\\b\\-a
    \end{pmatrix}\,\text{ with }\beta_1,\beta_2>0,\\
    \begin{pmatrix}
  \lambda\\\lambda\eta\\\lambda\eta^2
    \end{pmatrix}&= \begin{pmatrix}
    x \\ y\\z
    \end{pmatrix}+\gamma_1\begin{pmatrix}
      1\\0\\0
    \end{pmatrix}+\gamma_2\begin{pmatrix}
      d-b-1\\d-b\\a-c
    \end{pmatrix}\,\text{ with }\gamma_1,\gamma_2>0.
\end{align*}
By using only the $y,z-$equations of the second and third condition we can solve for $\beta_2,\gamma_2,y,z$:
\begin{align*}
    y&= \lambda \eta - (\lambda-1)(d-b)(a\eta+b\eta^2), \\  z&=\lambda \eta^2 + (\lambda-1)(c-a)(a\eta+b\eta^2),\\
    \beta_2&= \frac{1}{a}(\lambda-1)(\eta^2 + (c-a)(a\eta+b\eta^2))=\frac{1}{b}(\lambda-1)(-\eta + (d-b)(a\eta+b\eta^2)),\\
    \gamma_2&= (\lambda -1)(a\eta+b\eta^2).
\end{align*}
Since $\alpha_1 =y$ we have to check whether $y>0$. However, since $d>b$, we have $$y>0 \,\,\Leftrightarrow\,\, \lambda < \frac{(b-d)(a+b\eta)}{1+(b-d)(a+b\eta)}.$$ This contradicts the generality of our point conditions. So the path $A-D-E-F$ is not possible for any polytope in family 21.
 \end{proof}

\section*{Conflicts of Interest}
On behalf of all authors, the corresponding author states that there is no conflict of interest.

\bibliographystyle{abbrv}
\bibliography{sample}

\begin{thebibliography}{10}

\bibitem{oscar}
\texttt{OSCAR} {C}omputer {A}lgebra {R}esearch {S}ystem.
\newblock \url{https://oscar.computeralgebra.de/}.

\bibitem{BeBrLo17}
B.~Bertrand, E.~Brugall\'{e}, and L.~L\'{o}pez~de Medrano.
\newblock Planar tropical cubic curves of any genus, and higher dimensional
  generalisations.
\newblock {\em Enseign. Math.}, 64(3-4):415--457, 2018.

\bibitem{bs14}
M.~Blanco and F.~Santos.
\newblock Lattice 3-polytopes with few lattice points.
\newblock {\em SIAM Journal on Discrete Mathematics}, 30, 09 2014.

\bibitem{BS15}
M.~Blanco and F.~Santos.
\newblock Lattice 3-polytopes with six lattice points.
\newblock {\em SIAM Journal on Discrete Mathematics}, 30, 01 2015.

\bibitem{BS18}
M.~Blanco and F.~Santos.
\newblock Enumeration of lattice 3-polytopes by their number of lattice points.
\newblock {\em Discrete \& Computational Geometry}, 60, 10 2018.

\bibitem{BG20}
M.~Brandt and A.~Geiger.
\newblock {A Tropical Count of Binodal Cubic Surfaces}.
\newblock {\em Le Matematiche}, 75(2):627--649, 09 2020.

\bibitem{BrMi2007}
E.~Brugall\'{e} and G.~Mikhalkin.
\newblock Enumeration of curves via floor diagrams.
\newblock {\em Comptes Rendus Mathematique}, 345(6):329 -- 334, 2007.

\bibitem{BM09}
E.~Brugall\'{e} and G.~Mikhalkin.
\newblock Floor decompositions of tropical curves: the planar case.
\newblock In {\em Proceedings of {G}\"{o}kova {G}eometry-{T}opology
  {C}onference 2008}, pages 64--90. G\"{o}kova Geometry/Topology Conference
  (GGT), G\"{o}kova, 2009.

\bibitem{singular}
W.~Decker, G.-M. Greuel, G.~Pfister, and H.~Sch\"onemann.
\newblock {\sc Singular} {4-2-1} --- {A} computer algebra system for polynomial
  computations.
\newblock \url{http://www.singular.uni-kl.de}, 2021.

\bibitem{FM09}
S.~Fomin and G.~Mikhalkin.
\newblock Labeled floor diagrams for plane curves.
\newblock arXiv:0906.3828, 2009.

\bibitem{GM07}
A.~Gathmann and H.~Markwig.
\newblock The {C}aporaso-{H}arris formula and plane relative {G}romov-{W}itten
  invariants in tropical geometry.
\newblock {\em Math. Ann.}, 338(4):845--868, 2007.

\bibitem{GaMa07}
A.~Gathmann and H.~Markwig.
\newblock The numbers of tropical plane curves through points in general
  position.
\newblock {\em Journal für die reine und angewandte Mathematik (Crelles
  Journal)}, 602:155--177, 2007.

\bibitem{Code21}
A.~Geiger.
\newblock Github, \texttt{OSCAR} functions.
\newblock \url{https://github.com/AlheydisGeiger/Code-Binodal-Surfaces}.

\bibitem{G22}
A.~Geiger.
\newblock Tropical geometric counting problems.
\newblock PhD Thesis.

\bibitem{IKS03}
I.~Itenberg, V.~Kharlamov, and E.~Shustin.
\newblock Welschinger invariant and enumeration of real rational curves.
\newblock {\em Int. Math. Res. Not.}, 2003(49):2639--2653, 2003.

\bibitem{tropicalbook}
D.~Maclagan and B.~Sturmfels.
\newblock {\em Introduction to Tropical Geometry:}.
\newblock Graduate Studies in Mathematics. American Mathematical Society, 2015.

\bibitem{MaMaSh12}
H.~Markwig, T.~Markwig, and E.~Shustin.
\newblock Tropical surface singularities.
\newblock {\em Discrete Comput. Geom.}, 48(4):879--914, 2012.

\bibitem{MaMaSh18}
H.~Markwig, T.~Markwig, and E.~Shustin.
\newblock Enumeration of complex and real surfaces via tropical geometry.
\newblock {\em Adv. Geom.}, 18(1):69--100, 2018.

\bibitem{MaMaShSh19}
H.~Markwig, T.~Markwig, E.~Shustin, and K.~Shaw.
\newblock Tropical floor plans and count of complex and real multi-nodal
  tropical surfaces, 2019.
\newblock arXiv:1910.08585.

\bibitem{Mik}
G.~Mikhalkin.
\newblock Enumerative tropical algebraic geometry in {$\mathbb{R}^2$}.
\newblock {\em Journal of the American Mathematical Society}, 18, 01 2004.

\bibitem{Si20}
U.~Sinichkin.
\newblock Enumeration of algebraic and tropical singular hypersurfaces, 2020.

\end{thebibliography}

\end{document}